\newtheorem{defi}{Definition}[section]
\newtheorem{theo}{Theorem}[section]
\newtheorem{prop}[theo]{Proposition}
\newtheorem{lemm}[theo]{Lemma}
\newtheorem{cor}[theo]{Corollary}
\newtheorem{rem}[theo]{Remark}
  \newcommand{\subsubsubsection}{\@startsection{paragraph}{4}{\z@}%
    {1.0\Cvs \@plus.5\Cdp \@minus.2\Cdp}%
    {.1\Cvs \@plus.3\Cdp}%
    {\reset@font\sffamily\normalsize}
  }
\def\${|\!|\!|}
\newcommand{\rs}{\varodot}
\newcommand{\pl}{\varolessthan}
\newcommand{\pr}{\varogreaterthan}
\newcommand{\conj}[1]{\overline{#1}}
\def\com{\text{{\bf com}}}
\def\B{\mathcal{B}}
\def\L{\mathcal{L}}
\def\E{\mathcal{E}}
\def\R{\mathbb{R}}
\def\T{\mathbb{T}}
\def\Z{\mathbb{Z}}
\def\C{\mathbb{C}}
\newcommand{\Ex}{\mathbb{E}}
\newcommand{\drawC}[2]{
\draw[very thick] #1 -- #2;
\filldraw #1 circle (0.8pt);
\filldraw #2 circle (1.6pt);}
\newcommand{\drawD}[2]{
\draw[thin] #1 circle (1pt);
\draw[thin,double] #1 -- #2;
\filldraw[white] #1 circle (0.2pt);
\draw[draw=black,fill=white, thin] #2 circle (1.6pt);}
\newcommand{\drawI}[2]{
\draw[very thick,black] #1 -- #2;
\filldraw[black] #1 circle (0.8pt);
\filldraw[black] #2 circle (0.8pt);}
\newcommand{\drawJ}[2]{
\filldraw[black] #1 circle (1pt);
\filldraw[black] #1 circle (1pt);
\draw[thin,double] #1 -- #2;
\filldraw[white] #1 circle (0.2pt);
\filldraw[white] #2 circle (0.2pt);}
\newcommand{\putrs}[1]{
\filldraw[white] #1 circle (2.5pt);
\filldraw[black] #1 circle (0.4pt);
\draw[draw=black] #1 circle (2.5pt);
}
\newcommand{\IX}{{\,
\begin{tikzpicture}[baseline=0.5pt, scale=6/12]
\coordinate (O1) at (0,0);
\coordinate (X1) at (0pt,12pt);
\drawC{(O1)}{(X1)}
\end{tikzpicture}\,}}
\newcommand{\JY}{{\,
\begin{tikzpicture}[baseline=0.5pt, scale=6/12]
\coordinate (O1) at (0,0);
\coordinate (X1) at (0pt,12pt);
\drawD{(O1)}{(X1)}
\end{tikzpicture}\,}}
\newcommand{\CC}{{\,
\begin{tikzpicture}[baseline=0.5pt, scale=6/12]
\coordinate (O1) at (0,0);
\coordinate (X1) at (-4pt,12pt);
\coordinate (X2) at (+4pt,12pt);
\drawC{(O1)}{(X1)}
\drawC{(O1)}{(X2)}
\end{tikzpicture}\,}}
\newcommand{\CD}{{\,
\begin{tikzpicture}[baseline=0.5pt, scale=6/12]
\coordinate (O1) at (0,0);
\coordinate (X1) at (-4pt,12pt);
\coordinate (X2) at (+4pt,12pt);
\drawC{(O1)}{(X1)}
\drawD{(O1)}{(X2)}
\end{tikzpicture}\,}}
\newcommand{\CCD}{{\,
\begin{tikzpicture}[baseline=0.5pt, scale=6/12]
\coordinate (O1) at (0,0);
\coordinate (X1) at (-6pt,11pt);
\coordinate (X2) at (+0pt,12pt);
\coordinate (X3) at (+6pt,11pt);
\drawC{(O1)}{(X1)}
\drawC{(O1)}{(X2)}
\drawD{(O1)}{(X3)}
\end{tikzpicture}\,}}
\newcommand{\ICCD}{{\,
\begin{tikzpicture}[baseline=1pt, scale=6/12]
\coordinate (O1) at (0pt,0pt);
\coordinate (O2) at (0pt,7pt);
\coordinate (X1) at (-5pt,14pt);
\coordinate (X2) at (+0pt,15pt);
\coordinate (X3) at (+5pt,14pt);
\drawC{(O2)}{(X1)}
\drawC{(O2)}{(X2)}
\drawD{(O2)}{(X3)}
\drawI{(O1)}{(O2)}
\end{tikzpicture}\,}}
\newcommand{\JDDC}{{\,
\begin{tikzpicture}[baseline=1pt, scale=6/12]
\coordinate (O1) at (0pt,0pt);
\coordinate (O2) at (0pt,7pt);
\coordinate (X1) at (-5pt,14pt);
\coordinate (X2) at (+0pt,15pt);
\coordinate (X3) at (+5pt,14pt);
\drawD{(O2)}{(X1)}
\drawD{(O2)}{(X2)}
\drawC{(O2)}{(X3)}
\drawJ{(O1)}{(O2)}
\end{tikzpicture}\,}}
\newcommand{\ICC}{{\,
\begin{tikzpicture}[baseline=1pt, scale=6/12]
\coordinate (O1) at (0pt,0pt);
\coordinate (O2) at (0pt,7pt);
\coordinate (X1) at (-3pt,15pt);
\coordinate (X3) at (+3pt,15pt);
\drawC{(O2)}{(X1)}
\drawC{(O2)}{(X3)}
\drawI{(O1)}{(O2)}
\end{tikzpicture}\,}}
\newcommand{\JDD}{{\,
\begin{tikzpicture}[baseline=1pt, scale=6/12]
\coordinate (O1) at (0pt,0pt);
\coordinate (O2) at (0pt,7pt);
\coordinate (X1) at (-3pt,15pt);
\coordinate (X3) at (+3pt,15pt);
\drawD{(O2)}{(X1)}
\drawD{(O2)}{(X3)}
\drawJ{(O1)}{(O2)}
\end{tikzpicture}\,}}
\newcommand{\ICD}{{\,
\begin{tikzpicture}[baseline=1pt, scale=6/12]
\coordinate (O1) at (0pt,0pt);
\coordinate (O2) at (0pt,7pt);
\coordinate (X1) at (-3pt,15pt);
\coordinate (X3) at (+3pt,15pt);
\drawC{(O2)}{(X1)}
\drawD{(O2)}{(X3)}
\drawI{(O1)}{(O2)}
\end{tikzpicture}\,}}
\newcommand{\JDC}{{\,
\begin{tikzpicture}[baseline=1pt, scale=6/12]
\coordinate (O1) at (0pt,0pt);
\coordinate (O2) at (0pt,7pt);
\coordinate (X1) at (-3pt,15pt);
\coordinate (X3) at (+3pt,15pt);
\drawD{(O2)}{(X1)}
\drawC{(O2)}{(X3)}
\drawJ{(O1)}{(O2)}
\end{tikzpicture}\,}}
\newcommand{\CDICD}{{\,
\begin{tikzpicture}[baseline=1pt, scale=6/12]
\coordinate (O1) at (0pt,0pt);
\coordinate (O2) at (0pt,7pt);
\coordinate (X1) at (-3pt,15pt);
\coordinate (X3) at (+3pt,15pt);
\coordinate (Y1) at (-5pt,6pt);
\coordinate (Y2) at (+5pt,6pt);
\drawC{(O2)}{(X1)}
\drawD{(O2)}{(X3)}
\drawI{(O1)}{(O2)}
\drawC{(O1)}{(Y1)}
\drawD{(O1)}{(Y2)}
\putrs{(O1)}
\end{tikzpicture}\,}}
\newcommand{\CDICC}{{\,
\begin{tikzpicture}[baseline=1pt, scale=6/12]
\coordinate (O1) at (0pt,0pt);
\coordinate (O2) at (0pt,7pt);
\coordinate (X1) at (-3pt,15pt);
\coordinate (X3) at (+3pt,15pt);
\coordinate (Y1) at (-5pt,6pt);
\coordinate (Y2) at (+5pt,6pt);
\drawC{(O2)}{(X1)}
\drawC{(O2)}{(X3)}
\drawI{(O1)}{(O2)}
\drawC{(O1)}{(Y1)}
\drawD{(O1)}{(Y2)}
\putrs{(O1)}
\end{tikzpicture}\,}}
\newcommand{\CCJDC}{{\,
\begin{tikzpicture}[baseline=1pt, scale=6/12]
\coordinate (O1) at (0pt,0pt);
\coordinate (O2) at (0pt,7pt);
\coordinate (X1) at (-3pt,15pt);
\coordinate (X3) at (+3pt,15pt);
\coordinate (Y1) at (-5pt,6pt);
\coordinate (Y2) at (+5pt,6pt);
\drawD{(O2)}{(X1)}
\drawC{(O2)}{(X3)}
\drawJ{(O1)}{(O2)}
\drawC{(O1)}{(Y1)}
\drawC{(O1)}{(Y2)}
\putrs{(O1)}
\end{tikzpicture}\,}}
\newcommand{\CCJDD}{{\,
\begin{tikzpicture}[baseline=1pt, scale=6/12]
\coordinate (O1) at (0pt,0pt);
\coordinate (O2) at (0pt,7pt);
\coordinate (X1) at (-3pt,15pt);
\coordinate (X3) at (+3pt,15pt);
\coordinate (Y1) at (-5pt,6pt);
\coordinate (Y2) at (+5pt,6pt);
\drawD{(O2)}{(X1)}
\drawD{(O2)}{(X3)}
\drawJ{(O1)}{(O2)}
\drawC{(O1)}{(Y1)}
\drawC{(O1)}{(Y2)}
\putrs{(O1)}
\end{tikzpicture}\,}}
\newcommand{\CDICCD}{{\,
\begin{tikzpicture}[baseline=1pt, scale=6/12]
\coordinate (O1) at (0pt,0pt);
\coordinate (O2) at (0pt,7pt);
\coordinate (X1) at (-5pt,14pt);
\coordinate (X2) at (+0pt,15pt);
\coordinate (X3) at (+5pt,14pt);
\coordinate (Y1) at (-7pt,6pt);
\coordinate (Y2) at (+7pt,6pt);
\drawC{(O2)}{(X1)}
\drawC{(O2)}{(X2)}
\drawD{(O2)}{(X3)}
\drawI{(O1)}{(O2)}
\drawC{(O1)}{(Y1)}
\drawD{(O1)}{(Y2)}
\putrs{(O1)}
\end{tikzpicture}\,}}
\newcommand{\CCJDDC}{{\,
\begin{tikzpicture}[baseline=1pt, scale=6/12]
\coordinate (O1) at (0pt,0pt);
\coordinate (O2) at (0pt,7pt);
\coordinate (X1) at (-5pt,14pt);
\coordinate (X2) at (+0pt,15pt);
\coordinate (X3) at (+5pt,14pt);
\coordinate (Y1) at (-7pt,6pt);
\coordinate (Y2) at (+7pt,6pt);
\drawD{(O2)}{(X1)}
\drawD{(O2)}{(X2)}
\drawC{(O2)}{(X3)}
\drawJ{(O1)}{(O2)}
\drawC{(O1)}{(Y1)}
\drawC{(O1)}{(Y2)}
\putrs{(O1)}
\end{tikzpicture}\,}}
\newcommand{\CICCD}{{\,
\begin{tikzpicture}[baseline=1pt, scale=6/12]
\coordinate (O1) at (0pt,0pt);
\coordinate (O2) at (0pt,7pt);
\coordinate (X1) at (-5pt,14pt);
\coordinate (X2) at (+0pt,15pt);
\coordinate (X3) at (+5pt,14pt);
\coordinate (Y2) at (+7pt,6pt);
\drawC{(O2)}{(X1)}
\drawC{(O2)}{(X2)}
\drawD{(O2)}{(X3)}
\drawI{(O1)}{(O2)}
\drawC{(O1)}{(Y2)}
\putrs{(O1)}
\end{tikzpicture}\,}}
\newcommand{\DICCD}{{\,
\begin{tikzpicture}[baseline=1pt, scale=6/12]
\coordinate (O1) at (0pt,0pt);
\coordinate (O2) at (0pt,7pt);
\coordinate (X1) at (-5pt,14pt);
\coordinate (X2) at (+0pt,15pt);
\coordinate (X3) at (+5pt,14pt);
\coordinate (Y2) at (+7pt,6pt);
\drawC{(O2)}{(X1)}
\drawC{(O2)}{(X2)}
\drawD{(O2)}{(X3)}
\drawI{(O1)}{(O2)}
\drawD{(O1)}{(Y2)}
\putrs{(O1)}
\end{tikzpicture}\,}}
\def\paper{paper}
\begin{document}

\title[Global well-posedness of CGL equation with a space-time white noise]{Global well-posedness of complex Ginzburg-Landau equation with a space-time white noise}
\author{Masato Hoshino}
\date{\today}
\keywords{Complex Ginzburg-Landau equation, Paracontrolled calculus}
\subjclass{60H15, 82C28}
\address{Waseda University, 3-4-1 Okubo, Shinjuku-ku, Tokyo 169-0072, Japan}
\email{m.hoshino3@kurenai.waseda.jp}
\maketitle

\begin{abstract}
We show the global-in-time well-posedness of the complex Ginzburg-Landau (CGL) equation with a space-time white noise on the 3-dimensional torus. Our method is based on \cite{MW16}, where Mourrat and Weber showed the global well-posedness for the dynamical $\Phi_3^4$ model. We prove a priori $L^{2p}$ estimate for the paracontrolled solution as in the deterministic case \cite{DGL}.
\end{abstract}

\section{Introduction}\label{section:introduction}

In this \paper, we consider the following stochastic complex Ginzburg-Landau (CGL) equation on the three-dimensional torus $\T^3=(\R/\Z)^3$:
\begin{equation}\label{introduction:CGL}
\begin{aligned}
&\partial_tu=(i+\mu)\Delta u-\nu|u|^2u+\lambda u+\xi,\quad t>0,\ x\in\T^3,\\
&u(0,\cdot)=u_0,
\end{aligned}
\end{equation}
where $\mu>0$, $\nu\in\{z\in\C\,;\Re z>0\}$, $\lambda\in\C$, and $\xi$ is a complex space-time white noise, which is a centered Gaussian random field with covariance structure
\begin{align*}
\Ex[\xi(t,x)\xi(s,y)]=0,\quad \Ex[\xi(t,x)\overline{\xi(s,y)}]=\delta(t-s)\delta(x-y).
\end{align*}

The CGL equation appears as a generic amplitude equation near the threshold for an instability in fluid mechanics, as well as in the theory of phase transition in superconductivity. Stochastic CGL equation has also been studied in several settings. In \cite{MBS1,MBS2}, CGL equation on a bounded domain in $\R^d$ with a smeared noise in the spatial variable $x$ or a multiplicative noise was studied, where the global well-posedness of the $L^p$ solutions and the existence and uniqueness of an invariant measure were shown, under some additional assumptions. In \cite{HaiCGL}, CGL equation on the one-dimensional torus with a space-time white noise was studied and similar results were shown. In \cite{KS04}, the authors showed the inviscid limit of the CGL equation \eqref{introduction:CGL} with a noise $\sqrt{\mu}\xi$, where $\xi$ is a smeared noise in $x$, to the nonlinear Schr\"odinger equation as $\mu\downarrow0$. The solutions considered in these studies belong to the space of functions. However, when $d\ge2$ and $\xi$ is a space-time white noise, the solution is expected to have the negative regularity $(\frac{2-d}{2})^{-}$, i.e. $\frac{2-d}{2}-\kappa$ for every $\kappa>0$, so that the nonlinear term $-\nu|u|^2u$ of the CGL equation \eqref{introduction:CGL} is ill-defined.

Recent theories of {\em regularity structures} by Hairer \cite{Hai14} or {\em paracontrolled calculus} by Gubinelli, Imkeller and Perkowski \cite{GIP} made it possible to show the general local well-posedness results for several singular stochastic PDEs. In particular, as well as the dynamical $\Phi_d^4$ model, we can apply these theories to the stochastic CGL equation \eqref{introduction:CGL} with a space-time white noise when $d\le3$. For an application for $d=3$, see \cite{HIN}.

The meaning of the local well-posedness for the equation \eqref{introduction:CGL} is as follows. Let $\eta\in\mathcal{S}(\R^3)$ satisfy $\int_{\R^3}\eta(x)dx=1$ and set $\eta^\epsilon(x)=\epsilon^{-3}\eta(\epsilon^{-1}x)$ for $\epsilon>0$. We consider the smeared noise $\xi^\epsilon(t,x)=(\xi(t)*\eta^\epsilon)(x)$ in $x$ and the suitably renormalized equation:
\begin{equation}\label{introduction:renormalized CGL}
\begin{aligned}
&\partial_tu^\epsilon=(i+\mu)\Delta u^\epsilon-\nu|u^\epsilon|^2u^\epsilon+C^\epsilon u^\epsilon+\xi^\epsilon,\quad t>0,\ x\in\T^3,\\
&u^\epsilon(0,\cdot)=u_0,
\end{aligned}
\end{equation}
where $C^\epsilon$ is a constant depending only on $\epsilon,\mu,\nu,\lambda$ and $\eta$, which behaves as $O(\frac1\epsilon)$ as $\epsilon\downarrow0$. For the precise definition of $C^\epsilon$, see \cite[Sections~3.4 and 5.4]{HIN}. Since $u^\epsilon$ is a continuous function in $(t,x)$, we can define the nonlinear term $-\nu|u^\epsilon|^2u^\epsilon$ in usual sense. In \cite{HIN}, by using the theory of regularity structures and paracontrolled calculus, the authors showed that the sequence $\{u^\epsilon\}_{\epsilon>0}$ converges as $\epsilon\downarrow0$ in the space $\mathcal{C}^{-\frac23+\kappa}$ for every small $\kappa>0$, where $\mathcal{C}^\alpha=\mathcal{B}_{\infty,\infty}^\alpha$ is the complex-valued Besov space on $\T^3$. However, they showed only the convergence up to some random time $T\in(0,\infty]$ and did not study whether $T=\infty$ or not.

The aim of this \paper\ is to show the {\em global-in-time} well-posedness for the equation \eqref{introduction:CGL} using the paracontrolled calculus. We use similar arguments to \cite{MW16}, where Mourrat and Weber showed the global well-posedness for the dynamical $\Phi_3^4$ model:
$$
\partial_tX=\Delta X-X^3+mX+\xi,\quad t>0,\ x\in\T^3,
$$
which is regarded as a real-valued version of the equation \eqref{introduction:CGL}. However, in our setting we need to improve their method as we will explain later. The main result of this paper is formulated as follows.

\begin{theo}\label{introduction:main theorem}
Let $\mu>\frac{1}{2\sqrt{2}}$. Choose sufficiently small $\kappa>0$ depending on $\mu$. For every initial value $u_0\in\mathcal{C}^{-\frac23+\kappa}$, the sequence $\{u^\epsilon\}_{\epsilon>0}$ of the solution of \eqref{introduction:renormalized CGL} has a limit $u\in C([0,\infty),\mathcal{C}^{-\frac23+\kappa})$, that is, for every $T>0$ we have
\begin{align*}
\lim_{\epsilon\downarrow0}\|u^\epsilon-u\|_{C([0,T],\mathcal{C}^{-\frac23+\kappa})}=0
\end{align*}
in probability. The limit $u$ is independent of the choice of the mollifier $\eta$.
\end{theo}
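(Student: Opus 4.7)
The plan is to combine the local well-posedness from \cite{HIN} with a uniform-in-$\epsilon$ a priori bound on the ``regular part'' of the paracontrolled decomposition of $u^\epsilon$, in the spirit of \cite{MW16}. Using \cite{HIN} I write $u^\epsilon = Z^\epsilon + v^\epsilon$, where $Z^\epsilon$ collects the finitely many Wick-renormalized polynomials of the stochastic convolution (converging in the natural Besov spaces of negative regularity) and $v^\epsilon$ is the more regular remainder obeying a random PDE whose right-hand side contains the good cubic $-\nu|v^\epsilon|^2v^\epsilon$, explicit lower-order stochastic corrections, and products of $v^\epsilon$ with the rough stochastic data. Standard arguments then reduce Theorem \ref{introduction:main theorem} to a uniform bound on a suitable norm of $v^\epsilon$ on each finite time interval.

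The heart of the matter is the a priori $L^{2p}$ estimate for $v^\epsilon$. Multiplying the equation for $v^\epsilon$ by $|v^\epsilon|^{2p-2}\overline{v^\epsilon}$, taking the real part, and integrating over $\T^3$ produces $\frac{1}{2p}\frac{d}{dt}\|v^\epsilon\|_{L^{2p}}^{2p}$ on the left. Integration by parts on the linear term $(i+\mu)\Delta v^\epsilon$ yields, with $\alpha=\Re(\overline{v^\epsilon}\nabla v^\epsilon)$ and $\beta=\Im(\overline{v^\epsilon}\nabla v^\epsilon)$, the quadratic form
\begin{align*}
-\int_{\T^3}\bigl[\mu(2p-1)|\alpha|^2+\mu|\beta|^2-2(p-1)\alpha\cdot\beta\bigr]|v^\epsilon|^{2p-4}\,dx,
\end{align*}
which is strictly negative definite exactly when $\mu^2(2p-1)>(p-1)^2$. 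For $p=3/2$ this reproduces the sharp threshold $\mu>\frac{1}{2\sqrt{2}}$ of the statement; when $\mu$ is strictly above this value a nontrivial interval of admissible exponents $p$ remains open, mirroring the deterministic theory of \cite{DGL}. Together with the dissipative $-\Re(\nu)\int|v^\epsilon|^{2p+2}dx$ coming from the good part of the cubic, this produces an energy functional with two coercive terms---gradient and $L^{2p+2}$---against which every remaining contribution has to be absorbed.

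The principal obstacle is the control of the paracontrolled residues, i.e.\ products of $v^\epsilon$ (and its paraproduct derivatives) with the roughest stochastic objects, of regularity $-\frac12-\kappa$ and $-1-\kappa$, since these cannot be estimated in $L^\infty$. Following \cite{MW16}, I would use Besov duality to split each such term into a contribution paired with $\nabla v^\epsilon$, absorbed by the gradient coercivity above, and a contribution paired with $v^\epsilon$ itself, absorbed by the $L^{2p+2}$ coercivity via Young's inequality after interpolation; it is the smallness of $\kappa$ relative to $\mu-\frac{1}{2\sqrt{2}}$ that keeps these interpolation exponents admissible. The resulting differential inequality for $\|v^\epsilon\|_{L^{2p}}^{2p}$ has random but almost surely finite coefficients, independent of $\epsilon$.

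Feeding the $L^{2p}$ bound back into the mild formulation for $v^\epsilon$ via Schauder estimates for the semigroup $e^{t(i+\mu)\Delta}$ upgrades the control to the regularity level required by the local theory of \cite{HIN}, so the blow-up alternative is ruled out on every finite time interval and the limit $u\in C([0,\infty),\mathcal{C}^{-\frac23+\kappa})$ exists. Independence of the limit from the mollifier $\eta$ is inherited from the local statement already proved in \cite{HIN}.
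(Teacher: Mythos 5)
Your energy identity and matrix condition are correct: the quadratic form $\mu^2(2p-1)>(p-1)^2$ is algebraically equivalent to the paper's $p<1+\mu(\mu+\sqrt{1+\mu^2})$, and at $p=\tfrac32$ it gives exactly $\mu>\tfrac{1}{2\sqrt{2}}$. The high-level strategy (paracontrolled decomposition, deterministic CGL-type $L^{2p}$ estimate, ruling out blow-up) is also the right one. But there are two genuine gaps.

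First, the two-piece decomposition $u^\epsilon=Z^\epsilon+v^\epsilon$ is too coarse to carry out the $L^{2p}$ testing. Your remainder $v^\epsilon$ corresponds to the paper's $v+w$, and the $v$-component has regularity only $1^-$ (more precisely, it lives in $\B_\infty^{1-\kappa'}$). Testing the equation against $|v^\epsilon|^{2p-2}\overline{v^\epsilon}$ requires $\nabla v^\epsilon\in C_T L^\infty$ and $v^\epsilon\in C_T^\delta L^\infty$ to justify the Young integrals, the integration by parts, and the Besov-duality pairings with rough data of regularity $-1-\kappa$; the paracontrolled component $v$ does not satisfy these. Moreover, the products $v^\epsilon\CD$, $v^\epsilon\CC$ that appear on the right-hand side are \emph{a priori ill-defined} when $v^\epsilon$ has regularity $\le 1+\kappa$; they only make sense once $v^\epsilon$ is split into the paraproduct-structured $v$ (whose resonant products with $\CD,\CC$ are defined via commutators and the given resonants of the driving vector) plus a truly regular $w$. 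The paper therefore performs the $L^{2p}$ estimate on $w$ alone, controlling $v$ separately by Schauder estimates for the $c$-damped semigroup (Section 4) and feeding that control into the $w$-equation. Your proposal cannot ``absorb'' the paraproduct terms by coercivity, because you never reach a stage where those terms are classically defined.

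Second, you attribute the threshold $p=\tfrac32$ to the $L^{2p}$ energy estimate, but the energy estimate itself is valid for all $p>1$ (given $\mu$ large enough). What forces $p>\tfrac32$ is the final step of upgrading the time-integrated estimate to a pointwise-in-time bound. Your proposal compresses this into ``feeding the $L^{2p}$ bound back via Schauder estimates,'' but a single Schauder step does not close: the cubic nonlinearity $\|\mathcal{W}_{(1)}\|$ returns $\|w\|_{L^{3q}}^{3}$, which is too strong. The paper first establishes the intermediate $L^1[0,T]$ integrability of $\|w\|_{\B_q^{1+2\kappa'}}^q$, $\|v\|_{\B_{3q}^{1/2+\kappa'}}^{3q}$ and $\|w\|_{L^{3q}}^{3q}$ (Theorem 7.1, which itself requires a short-time iteration argument), and then runs $\mathcal{O}((\kappa')^{-1})$-many applications of Young's convolution inequality, crucially using the interpolation $\|w\|_{L^{3q}}\lesssim\|w\|_{\B_q^{3/2-2\kappa'}}^{4/7}$ (available only because $\|w\|_{L^{3q-2}}$ is already bounded) to weaken the cubic exponent from $3$ to $\tfrac{12}{7}$. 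It is this weakened exponent $\tfrac{12}{7}<\tfrac{7}{4}$ that lets the iteration climb to $p_2=\infty$, and $q>\tfrac53$, i.e.\ $p>\tfrac32$, is exactly what makes $\tfrac{12}{7q}-\tfrac14$ a contraction. Omitting this step leaves the proof with no mechanism to reach the $L^\infty[0,T]$ control needed to trigger the blow-up criterion of Theorem 3.6.
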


We reformulate the above theorem more precisely in Theorem \ref{localexistence+:main theorem} below.
 
We briefly explain the outline of the proof of Theorem \ref{introduction:main theorem}. If the noise $\xi$ is a continuous function in $(t,x)$, then the solution $u$ of the equation \eqref{introduction:CGL} would satisfy a priori $L^{2p}$ inequality:
$$
\sup_{0\le t\le T}\|u(t)\|_{L^{2p}}\le\|u_0\|_{L^{2p}}+C
$$
when the condition
\begin{align}\label{introduction:p<f(mu)}
1<p<1+\mu(\mu+\sqrt{1+\mu^2})
\end{align}
holds. See Proposition \ref{w:L^2p control of w from PDE} below or \cite[Section~4]{DGL}. However, since $u$ is distribution-valued in the present case, the $L^{2p}$ norm of $u$ diverges. In order to overcome this difficulty, we use a similar method to \cite{MW16}. Our method consists of the following three steps.

(1) Following the general theory of the paracontrolled calculus, we divide the solution $u$ into the sum
$$
u=\IX-\nu\ICCD+v+w,
$$
where $\IX$ and $\ICCD$ are stochastic processes explicitly defined, $v$ is the solution of the linear equation which contains $w$ as a coefficient, and $w$ is the regular term and solves the nonlinear equation of the form
$$
\partial_tw=(i+\mu)w-\nu|w|^2w+\cdots.
$$
For the precise definition of $(v,w)$, see the system \eqref{solution:CGLsystem} below.

(2) From the definition, a suitable norm of $v$ is controlled by a suitable norm of $w$. Hence it is sufficient to control only $w$ in some suitable norms. Since $w$ is sufficiently regular, we can apply the method of $L^{2p}$ inequality explained above to $w$ when the condition \eqref{introduction:p<f(mu)} holds. However, from the definition of the system \eqref{solution:CGLsystem}, we also need the control of $w$ in the $\B_{\frac{2p+2}3,\infty}^{1+2\kappa}$ norm. The second goal is to show a priori $L^1[0,T]$ estimate
\begin{align}\label{introduction:L^1[0,T]}
\int_0^T\Bigl(\|v(t)\|_{\B_{2p+2,\infty}^{\frac12+\kappa}}^{2p+2}+\|w(t)\|_{L^{2p+2}}^{2p+2}+\|w(t)\|_{\B_{\frac{2p+2}3,\infty}^{1+2\kappa}}^{\frac{2p+2}3}\Bigr)dt\le C'
\end{align}
for every $p>1$ and every small $\kappa>0$, see Theorem \ref{int:apriori estimate on L^1} below. Note that the similar estimate to above was obtained in \cite[Theorem~6.1]{MW16} for $p=2$.

(3) The final step is to improve the above $L^1[0,T]$ estimate into a priori $L^\infty[0,T]$ estimate
\begin{align}\label{introduction:L^infty[0,T]}
\sup_{0\le t\le T}\Bigl(\|v(t)\|_{\B_{2p+2,\infty}^{\frac12+\kappa}}+\|w(t)\|_{\B_{\frac{2p+2}3,\infty}^{\frac32-2\kappa}}\Bigr)\le C''
\end{align}
for every $p$ as close to $1$ as possible. We will see that the above estimate holds for every $p>\frac32$ in Theorems \ref{global:L^infty of v} and \ref{global:L^infty of w} below. As a result, we will obtain the global well-posedness for the equation \eqref{introduction:CGL} for every $\mu>\frac1{2\sqrt{2}}$, because the condition \eqref{introduction:p<f(mu)} is assumed.

Now we point out two differences in the proof of Theorem \ref{introduction:main theorem} from the arguments of \cite{MW16}. One difference is in the step (2). Since the condition \eqref{introduction:p<f(mu)} requires $\mu$ to be large depending on the value of $p$, we need to prove the $L^1[0,T]$ estimate \eqref{introduction:L^1[0,T]} for $p$ as close to $1$ as possible. Although Mourrat and Weber \cite{MW16} showed the $L^1[0,T]$ estimate \eqref{introduction:L^1[0,T]} for $p=2$ for the dynamical $\Phi_3^4$ model, it is not straightforward to rewrite their method for general $p>1$. Especially, the inequality (4.1) in \cite[Theorem~4.1]{MW16} was rather complicated, so that the estimate \eqref{introduction:L^1[0,T]} was shown only for $p=2$. In this paper, we have reviewed their result and rewrite it into a simpler form (Theorem \ref{dw:goal}), where the last two terms of the equality (4.1) in \cite[Theorem~4.1]{MW16} disappear. As a result, we can show the estimate \eqref{introduction:L^1[0,T]} for every $p>1$.

The other one is in the step (3). We can improve the $L^1[0,T]$ estimate \eqref{introduction:L^1[0,T]} into the $L^\infty[0,T]$ estimate \eqref{introduction:L^infty[0,T]} by using the Young's inequality repeatedly, see Section \ref{section:global} for details. Although this iteration was done four times in \cite[Table~2]{MW16}, we will see that we need more iterations as $p$ gets closer to $\frac32$ in our setting. Indeed, the number of the iterations diverges as $p\downarrow\frac32$. This argument works due to the two estimates given in Lemma \ref{global:lemm:iterating Young} below, which mean to what extent the cubic nonlinearity of the equation \eqref{introduction:CGL} can be {\em weakened}. In the present case, the exponent of the nonlinearity is weakened from ``$3$" to ``$\frac{12}7$". We believe that the condition $p>\frac32$ is optimal as long as we use this method.

This \paper\ is organized as follows. In Section \ref{section:para}, we recall some basic notions and results of the paracontrolled calculus. In Section \ref{section:solution}, we reformulate the equation \eqref{introduction:CGL} as a system of equations of $(v,w)$ and give the local well-posedness result. In the rest of this \paper, we prove the global well-posedness by the method explained above. In Section \ref{section:apriori_v}, we control a norm of $v$ by a norm of $w$. In Section \ref{section:apriori_w}, we apply the method of the $L^{2p}$ inequality to $w$ for every $p>1$, which is completed in Section \ref{section:apriori_dw}. In Section \ref{section:integrable}, we prove a priori $L^1[0,T]$ estimate \eqref{introduction:L^1[0,T]}. In Section \ref{section:global}, we finally obtain a priori $L^\infty[0,T]$ estimate \eqref{introduction:L^infty[0,T]}.

\section{Paracontrolled calculus}\label{section:para}

We recall some basic notions and results from \cite{GIP, MW16}. In what follows, for two functions $A=A(\lambda)$ and $B=B(\lambda)$ of a variable $\lambda$, we write $A\lesssim B$ if there exists a constant $c>0$ independent of $\lambda$ and one has $A\le cB$. We write $A\lesssim_\mu B$ if we want to emphasize that the constant $c$ depends on another parameter $\mu$.

\subsection{Notations}

First we recall the definition of the Besov spaces on $\T^3$ from \cite[Section~2]{BCD}. For $f,g\in L^2=L^2(\T^3,\C)$, we define the bilinear functional
$$
\langle f,g\rangle=\int_{\T^3}f(x)g(x)dx.
$$
Note that we do \emph{not} take the complex conjugate. We write $\mathbf{e}_k(x)=e^{2\pi ik\cdot x}\in L^2$ for $k\in\Z^3$ and denote by $\hat{u}(k)=\langle u,\mathbf{e}_{-k}\rangle$ the Fourier transform of $u\in L^2$. The {\em Besov space} $\B_{p,q}^\alpha$ is defined via Littlewood-Paley theory. Let $\{\rho_j\}_{j=-1}^\infty\subset C_0^\infty(\R^3)$ be a dyadic partition of unity, i.e.
\begin{enumerate}
\setlength{\itemsep}{1mm}
\item $\rho_{-1}$ and $\rho_0$ are radial smooth functions taking values in $[0,1]$.
\item $\text{supp}(\rho_{-1})\subset B(0,\frac{4}{3})$ and $\text{supp}(\rho_0)\in B(0,\frac{8}{3})\setminus B(0,\frac{3}{4})$, where $B(x,r)$ is the open ball in $\R^3$ of center $x$ and radius $r$.
\item $\rho_j=\rho_0(2^{-j}\cdot)$ for every $j\ge0$.
\item $\sum_{j=-1}^\infty\rho_j\equiv1$.
\end{enumerate}
Let $\triangle_j$ be the operator on $L^2$ defined by $\triangle_ju=\sum_{k\in\Z^3}\rho_j(k)\hat{u}(k)\mathbf{e}_k$. For every $\alpha\in\R$ and $p,q\in[1,\infty]$, we define the $\mathcal{B}_{p,q}^\alpha$ norm of $u\in L^2$ by
$$
\|u\|_{\B_{p,q}^\alpha}=\left\|\left(2^{j\alpha}\|\triangle_ju\|_{L^p(\T^3)}\right)\right\|_{l^q(\{-1\}\cup\mathbb{N})}.
$$
We define the space $\B_{p,q}^\alpha$ as the completion of $C^\infty(\T^3,\C)$ under the $\B_{p,q}^\alpha$ norm. This definition ensures that $\B_{p,q}^\alpha$ is separable and that the heat semigroup $(e^{t(i+\mu)\Delta})_{t\ge0}$ is strongly continuous on $\B_{p,q}^\alpha$ even if $q=\infty$, see \cite[Remark~3.13]{MW15}. We use the brief notation $\B_p^\alpha=\B_{p,\infty}^\alpha$ when $q=\infty$.

We formally define the Bony's {\em paraproduct}
$$
u\pl v=v\pr u=\sum_{i\le j-2}\triangle_iu\triangle_jv
$$
and the {\em resonant}
$$
u\rs v=\sum_{|i-j|\le1}\triangle_iu\triangle_jv.
$$
Note that we have $\overline{u\pl v}=\overline{u}\pl\overline{v}$ and $\overline{u\rs v}=\overline{u}\rs\overline{v}$ since $\overline{\triangle_ju}=\triangle_j\overline{u}$. These operators are well-defined under the assumptions of Proposition \ref{para:paraproduct and resonant} below.

We define several classes of functions from the time interval to the Besov space. Let $\alpha\in\R$ and $\delta\in(0,1]$.
\begin{itemize}
\setlength{\itemsep}{1mm}
\item $C_T\B_\infty^\alpha=C([0,T],\B_\infty^\alpha)$, equipped with the supremum norm
\[\|u\|_{C_T\B_\infty^\alpha}=\sup_{0\le t\le T}\|u(t)\|_{\B_\infty^\alpha}.\]
\item $C_T^\delta\B_\infty^\alpha=C^\delta([0,T],\B_\infty^\alpha)$, equipped with the seminorm
\[\|u\|_{C_T^\delta\B_\infty^\alpha}=\sup_{0\le s<t\le T}\frac{\|u(t)-u(s)\|_{\B_\infty^\alpha}}{|t-s|^\delta}.\]
\item $\L_T^{\alpha,\delta}=C_T\B_\infty^\alpha\cap C_T^\delta\B_\infty^{\alpha-2\delta}$ with the norm $\|\cdot\|_{\L_T^{\alpha,\delta}}=\|\cdot\|_{C_T\B_\infty^\alpha}+\|\cdot\|_{C_T^\delta\B_\infty^{\alpha-2\delta}}$.
\end{itemize}
It is useful to consider the norms which allow singularities at $t=0$. Let $\eta>0$.
\begin{itemize}
\setlength{\itemsep}{1mm}
\item $\E_T^\eta\B_\infty^\alpha=\{u\in C((0,T],\B_\infty^\alpha)\,;\|u\|_{\E_T^\eta\B_\infty^\alpha}<\infty\}$, where
\[\|u\|_{\E_T^\eta\B_\infty^\alpha}=\sup_{0<t\le T}t^\eta\|u(t)\|_{\B_\infty^\alpha}.\]
\item $\E_T^{\eta,\delta}\B_\infty^\alpha=\{u\in C((0,T],\B_\infty^\alpha)\,;\|u\|_{\E_T^{\eta,\delta}\B_\infty^\alpha}<\infty\}$, where
\[\|u\|_{\E_T^{\eta,\delta}\B_\infty^\alpha}=\sup_{0<s<t\le T}s^\eta\frac{\|u(t)-u(s)\|_{\B_\infty^\alpha}}{|t-s|^\delta}.\]
\item $\L_T^{\eta,\alpha,\delta}=\E_T^\eta\B_\infty^\alpha\cap C_T\B_\infty^{\alpha-2\eta}\cap\E_T^{\eta,\delta}\B_\infty^{\alpha-2\delta}$ with the norm $\|\cdot\|_{\L_T^{\eta,\alpha,\delta}}=\|\cdot\|_{\E_T^\eta\B_\infty^\alpha}+\|\cdot\|_{C_T\B_\infty^{\alpha-2\eta}}+\|\cdot\|_{\E_T^{\eta,\delta}\B_\infty^{\alpha-2\delta}}$.
\end{itemize}
When we consider the functions on $[0,\infty)$, we denote by $C\B_\infty^\alpha$ the Fr\'{e}chet space defined by the norms $\{\|\cdot\|_{C_T\B_\infty^\alpha}\}_{T>0}$. We define the spaces $C^\delta\B_\infty^\alpha$ and $\L^{\alpha,\delta}$ similarly.

\subsection{Basic estimates}

We give some basic results without proofs. They are used repeatedly in this \paper.

\begin{prop}\label{para:estimates of Besov norm}
Let $\alpha,\beta\in\R$ and $p,p_1,p_2,q,q_1,q_2\in[1,\infty]$.
\begin{enumerate}
\setlength{\itemsep}{1mm}
\item If $\alpha<\beta$, then $\|u\|_{\B_{p,q}^\alpha}\le\|u\|_{\B_{p,q}^\beta}$. Furthermore, $\|u\|_{\B_{p,1}^\alpha}\lesssim\|u\|_{\B_{p,q}^\beta}$ (\cite[Remark~3.4]{MW15}).
\item If $p_1\le p_2$, then $\|u\|_{\B_{p_1,q}^\alpha}\le\|u\|_{\B_{p_2,q}^\alpha}$.
\item If $q_1\ge q_2$, then $\|u\|_{\B_{p,q_1}^\alpha}\le\|u\|_{\B_{p,q_2}^\alpha}$.
\item $\|u\|_{\B_{p,\infty}^0}\lesssim\|u\|_{L^p}\le \|u\|_{\B_{p,1}^0}$ (\cite[Remark~3.5]{MW15}).
\end{enumerate}
\end{prop}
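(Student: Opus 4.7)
My plan is to prove the four inequalities one at a time, each reducing to one of four elementary mechanisms: a pointwise comparison of the dyadic weights $2^{j\alpha}$, a Hölder-type inequality on the index $j$, monotonicity of $\ell^q$ norms, or a standard Littlewood-Paley multiplier bound. These are all classical, and the paper cites \cite{BCD,MW15}, so my role here is only to indicate the mechanism.

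For (1), the first inequality is immediate block-by-block from $2^{j\alpha}\le 2^{j\beta}$ (with the low-frequency convention of the paper), followed by the monotonicity of $\ell^q$. For the second inequality I factor
\[
2^{j\alpha}\|\triangle_j u\|_{L^p}=2^{j(\alpha-\beta)}\cdot\bigl(2^{j\beta}\|\triangle_j u\|_{L^p}\bigr)
\]
and apply Hölder on $j$ with conjugate exponents $q$ and $q'=q/(q-1)$; convergence of the geometric series $\sum_{j\ge -1}2^{jq'(\alpha-\beta)}$, which holds because $\alpha<\beta$, provides the implicit constant.

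For (2), the unit volume of $\T^3$ gives the pointwise $\|f\|_{L^{p_1}}\le\|f\|_{L^{p_2}}$ whenever $p_1\le p_2$; I apply this to each $\triangle_j u$ and then take the $\ell^q$ norm in $j$. Claim (3) is nothing more than monotonicity of $\ell^q$ norms on a counting measure, applied to the sequence $(2^{j\alpha}\|\triangle_j u\|_{L^p})_j$.

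For (4), the right-hand bound $\|u\|_{L^p}\le\|u\|_{\B_{p,1}^0}$ is the triangle inequality in $L^p$ applied to the reconstruction $u=\sum_{j\ge -1}\triangle_j u$. The left-hand bound $\|u\|_{\B_{p,\infty}^0}\lesssim\|u\|_{L^p}$ is the only non-bookkeeping step: I would verify that the convolution kernels on $\T^3$ associated with the Fourier multipliers $\rho_j$ have uniformly bounded $L^1$-mass in $j$, so that Young's inequality yields a uniform $L^p\to L^p$ bound on $\triangle_j$. This last multiplier estimate is the mild technical point I would want to be careful about --- in particular writing the kernel as a periodization of a dilated Schwartz function rather than using a direct $\R^3$ convolution --- but it is entirely standard.
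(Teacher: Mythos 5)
The paper gives no proof of this proposition --- it simply cites \cite[Remarks~3.4, 3.5]{MW15} and \cite{BCD} --- so there is nothing in the source to compare your argument against; your sketches must stand on their own. The mechanisms you invoke are the standard ones and, with one caveat, are correct: Jensen's inequality on the unit-volume torus for (2), monotonicity of $\ell^q$ norms for (3), H\"older in $j$ with a convergent geometric tail for the second half of (1), the triangle inequality for the right half of (4), and, for the left half of (4), a uniform $L^p\to L^p$ bound on $\triangle_j$ obtained by writing its kernel as the periodization of $2^{3j}\check\rho_0(2^j\cdot)$ and applying Young --- that last Poisson-summation step is indeed the only point requiring care, and you identify it correctly.

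The one place your sketch is actually wrong as written is the first inequality of (1). With the paper's definition, the $j=-1$ block carries the weight $2^{-\alpha}$, and when $\alpha<\beta$ we have $2^{-\alpha}>2^{-\beta}$; the block-by-block comparison $2^{j\alpha}\le 2^{j\beta}$ therefore fails precisely at $j=-1$, and in fact $\|u\|_{\B_{p,q}^\alpha}\le\|u\|_{\B_{p,q}^\beta}$ is literally false for, say, a nonzero constant $u$. The parenthetical ``with the low-frequency convention of the paper'' does not rescue this --- the convention is the source of the problem, not the cure. The correct statement (and presumably what \cite{MW15} intends) is $\|u\|_{\B_{p,q}^\alpha}\lesssim\|u\|_{\B_{p,q}^\beta}$, with the implicit constant $2^{\beta-\alpha}$ absorbing the single ``wrong-way'' factor from the $j=-1$ block; with that modification your argument goes through. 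This is a cosmetic abuse common in the literature and is harmless downstream, but a careful write-up should either switch to $\lesssim$ or adopt a convention (e.g.\ a separate, undilated low-frequency term) under which the pointwise comparison does hold for all $j$.
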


\begin{prop}[{\cite[Theorem~2.80]{BCD}}]\label{para:interpolation}
For every $\alpha_0,\alpha_1\in\R$, $p_0,p_1,q_0,q_1\in[1,\infty]$ and $\nu\in[0,1]$, we have
$$
\|u\|_{\B_{p,q}^\alpha}\le\|u\|_{\B_{p_0,q_0}^{\alpha_0}}^{1-\nu}\|u\|_{\B_{p_1,q_1}^{\alpha_1}}^\nu,
$$
where $\alpha=(1-\nu)\alpha_0+\nu\alpha_1$, $\frac{1}{p}=\frac{1-\nu}{p_0}+\frac{\nu}{p_1}$, and $\frac{1}{q}=\frac{1-\nu}{q_0}+\frac{\nu}{q_1}$.
\end{prop}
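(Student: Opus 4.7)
The plan is to reduce the Besov-space interpolation inequality to two successive applications of H\"older's inequality, one at the level of a single Littlewood--Paley block and one at the level of the dyadic $\ell^q$ sum. Since the statement is standard and cited from \cite{BCD}, I do not expect any genuine obstacle; the content is essentially the log-convexity of the quantity $2^{j\alpha}\|\triangle_j u\|_{L^p}$ in the triple of parameters $(\alpha,1/p,1/q)$.

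First, fixing $j\ge-1$, I would exploit the log-convexity of the $L^p$ scale. Because $\tfrac{1}{p}=\tfrac{1-\nu}{p_0}+\tfrac{\nu}{p_1}$, applying H\"older to the factorization $|\triangle_j u|^{p(1-\nu)}\cdot|\triangle_j u|^{p\nu}$ with conjugate exponents $\tfrac{p_0}{p(1-\nu)}$ and $\tfrac{p_1}{p\nu}$ yields
$$
\|\triangle_j u\|_{L^p}\le\|\triangle_j u\|_{L^{p_0}}^{1-\nu}\|\triangle_j u\|_{L^{p_1}}^{\nu}.
$$
Using $\alpha=(1-\nu)\alpha_0+\nu\alpha_1$ to split $2^{j\alpha}=2^{j\alpha_0(1-\nu)}\,2^{j\alpha_1\nu}$, and setting $a_j=2^{j\alpha_0}\|\triangle_j u\|_{L^{p_0}}$ and $b_j=2^{j\alpha_1}\|\triangle_j u\|_{L^{p_1}}$, I obtain the blockwise bound $2^{j\alpha}\|\triangle_j u\|_{L^p}\le a_j^{1-\nu}b_j^{\nu}$.

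Second, I would feed this into the $\ell^q$ norm. The relation $\tfrac{1}{q}=\tfrac{1-\nu}{q_0}+\tfrac{\nu}{q_1}$ means that $\tfrac{q_0}{q(1-\nu)}$ and $\tfrac{q_1}{q\nu}$ are conjugate, so a second H\"older gives
$$
\sum_j a_j^{q(1-\nu)}b_j^{q\nu}\le\Bigl(\sum_j a_j^{q_0}\Bigr)^{q(1-\nu)/q_0}\Bigl(\sum_j b_j^{q_1}\Bigr)^{q\nu/q_1}.
$$
Taking a $q$-th root and combining with the blockwise estimate produces $\|u\|_{\B_{p,q}^\alpha}\le\|u\|_{\B_{p_0,q_0}^{\alpha_0}}^{1-\nu}\|u\|_{\B_{p_1,q_1}^{\alpha_1}}^{\nu}$, as claimed.

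The only bookkeeping is with degenerate exponents. If $\nu\in\{0,1\}$ the inequality is trivial, and when some of $p_i$ or $q_i$ equals $\infty$ (or the H\"older exponents above collapse to $1$ or $\infty$) the corresponding step is reinterpreted as an $L^\infty$ bound or a supremum over $j$; the inequality then follows either by direct inspection or by a routine limit argument. No step should present a real difficulty, since the whole argument is just a double H\"older inequality adapted to the Littlewood--Paley decomposition.
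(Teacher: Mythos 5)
Your proof is correct, and since the paper cites this proposition from \cite{BCD} without reproving it, the relevant comparison is with the standard textbook argument, which is exactly your double application of H\"older's inequality (first across $x$ within each Littlewood--Paley block, then across $j$ in the $\ell^q$ sum). Your treatment of the degenerate exponents is the right remark; nothing further is needed.
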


\begin{prop}[{\cite[Proposition~2.76]{BCD}} and {\cite[Proposition~3.23]{MW15}}]
For every $\alpha\in\R$ and $p,p',q,q'\in[1,\infty]$ such that $1=\frac1p+\frac1{p'}=\frac1q+\frac1{q'}$, we have
$$
|\langle f,g\rangle|\lesssim\|f\|_{\B_{p,q}^\alpha}\|g\|_{\B_{p',q'}^{-\alpha}}.
$$
\end{prop}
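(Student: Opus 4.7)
The plan is to expand both $f$ and $g$ via the Littlewood--Paley blocks and exploit Fourier orthogonality to reduce to a near-diagonal sum. Writing
$$\langle f,g\rangle = \sum_{i,j\geq-1}\langle\triangle_if,\triangle_jg\rangle,$$
I would first observe, on the Fourier side and using the paper's pairing $\langle u,v\rangle=\int uv$ (which carries no complex conjugate), that
$$\langle\triangle_if,\triangle_jg\rangle \;=\; \sum_{k\in\Z^3}\rho_i(k)\rho_j(-k)\hat f(k)\hat g(-k).$$
Since each $\rho_j$ is radial we have $\rho_j(-k)=\rho_j(k)$, and since the $\rho_j$'s form a dyadic partition of unity whose supports only overlap for adjacent indices, this term vanishes as soon as $|i-j|\geq 2$. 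The double sum therefore collapses to the near-diagonal contribution $|i-j|\leq 1$.

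For each surviving block I would apply Hölder's inequality in $L^p$ to obtain $|\langle\triangle_if,\triangle_jg\rangle|\leq \|\triangle_if\|_{L^p}\|\triangle_jg\|_{L^{p'}}$, and then insert the trivial factor $1=2^{i\alpha}\cdot 2^{-i\alpha}$. Setting $a_i=2^{i\alpha}\|\triangle_if\|_{L^p}$ and $b_j=2^{-j\alpha}\|\triangle_jg\|_{L^{p'}}$, the mismatch $2^{(j-i)\alpha}$ is bounded by $2^{|\alpha|}$ on $|i-j|\leq 1$, so it remains to control $\sum_{|i-j|\leq 1}a_ib_j$. Splitting this into the three shifted diagonals $j=i+r$ with $r\in\{-1,0,1\}$ and applying Hölder in $\ell^q$ on each one,
$$\sum_{i}a_ib_{i+r}\;\leq\;\|(a_i)\|_{\ell^q}\,\|(b_j)\|_{\ell^{q'}}\;=\;\|f\|_{\B_{p,q}^\alpha}\|g\|_{\B_{p',q'}^{-\alpha}},$$
and summing the three contributions gives the claimed inequality, with implicit constant depending only on $|\alpha|$.

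I do not anticipate a serious obstacle: the proof is a routine chain of Hölder inequalities once the Fourier reduction is in place. The two minor points that need attention are the no-conjugate convention---so that the reflection $k\mapsto-k$ appears explicitly and one must invoke radial symmetry of the $\rho_j$'s to identify $\rho_j(-k)=\rho_j(k)$---and the special role of the low-frequency block $\rho_{-1}$, whose support $B(0,4/3)$ is disjoint from $\mathrm{supp}\,\rho_j$ for $j\geq 1$ because $4/3<3/2=(3/4)\cdot 2$, so the adjacency bound $|i-j|\leq 1$ does apply uniformly across all indices.
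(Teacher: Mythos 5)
Your argument is correct and is precisely the standard Littlewood--Paley duality proof that the cited references (BCD Prop.~2.76, MW15 Prop.~3.23) use; the paper itself does not supply a proof, only the citation. The Fourier orthogonality reduction to $|i-j|\le 1$, the $L^p$--$L^{p'}$ H\"older on each block, the factor $2^{(j-i)\alpha}\le 2^{|\alpha|}$ on the near-diagonal, and the $\ell^q$--$\ell^{q'}$ H\"older along the three shifted diagonals are exactly the right steps, and your careful note on the no-conjugate pairing and the low-frequency block $\rho_{-1}$ is appropriate (strictly one should first establish the bound for $f,g\in C^\infty$ and then pass to the completion defining $\B_{p,q}^\alpha$, but that density step is routine).
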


\begin{prop}[{\cite[Theorem~2.71]{BCD}}]
 For every $\alpha\in\R$, $1\le p_1\le p_2\le\infty$ and $q\in[1,\infty]$, we have
$$
\|u\|_{\B_{p_2,q}^\alpha}\lesssim\|u\|_{\B_{p_1,q}^{\alpha+3\left(\frac{1}{p_1}-\frac{1}{p_2}\right)}}.
$$
\end{prop}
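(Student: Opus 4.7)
The plan is to reduce the statement to Bernstein's inequality applied block-by-block, which is the standard route for Besov embeddings.

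First I would recall Bernstein's inequality: if $f \in L^{p_1}(\T^3)$ has Fourier support contained in a ball of radius $\lesssim 2^j$, then for $1 \le p_1 \le p_2 \le \infty$,
\[
\|f\|_{L^{p_2}} \lesssim 2^{3j\left(\frac{1}{p_1}-\frac{1}{p_2}\right)} \|f\|_{L^{p_1}}.
\]
The proof picks a Schwartz function $\chi$ whose Fourier transform equals $1$ on a ball of radius $2$, writes $f = (\chi_j *) f$ where $\chi_j(x) = 2^{3j}\chi(2^j x)$ (suitably periodized), and applies Young's convolution inequality with exponents satisfying $1 + \tfrac{1}{p_2} = \tfrac{1}{r} + \tfrac{1}{p_1}$. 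The scaling $\|\chi_j\|_{L^r} \sim 2^{3j(1-1/r)} = 2^{3j(1/p_1-1/p_2)}$ produces the claimed factor.

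Second, I would apply this to each block $\triangle_j u$, whose Fourier support lies in a ball of radius $\lesssim 2^j$ (the dyadic annulus structure is used only for $j \ge 0$, but the ball bound is all that is needed here). This yields
\[
2^{j\alpha}\|\triangle_j u\|_{L^{p_2}} \lesssim 2^{j\left(\alpha + 3\left(\frac{1}{p_1}-\frac{1}{p_2}\right)\right)}\|\triangle_j u\|_{L^{p_1}}.
\]
Taking the $\ell^q$ norm over $j \in \{-1\}\cup\N$ on both sides and recognizing the right-hand side as $\|u\|_{\B_{p_1,q}^{\alpha+3(1/p_1-1/p_2)}}$ gives the conclusion. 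The case $p_1 = p_2$ is trivial and the cases $p_1 = \infty$ or $p_2 = \infty$ are handled by the same argument after interpreting $1/\infty = 0$.

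The only real obstacle is the proof of Bernstein's inequality itself, which on the torus requires a small care with the periodization of $\chi_j$, but for $j$ large this is harmless and for $j = -1$ the inequality reduces to a bounded number of frequency modes where all $L^p$ norms are equivalent. Once Bernstein is in hand, the rest is a one-line manipulation of the norm defining $\B_{p,q}^\alpha$.
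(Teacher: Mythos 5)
Your argument is correct and is essentially the standard proof: apply Bernstein's inequality blockwise to $\triangle_j u$, pick up the factor $2^{3j(1/p_1-1/p_2)}$, and take the $\ell^q$ norm over $j$. The paper cites \cite[Theorem~2.71]{BCD} without proof, and the proof given there is exactly this Bernstein-based argument, so your route coincides with the referenced one.
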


\begin{prop}[{\cite[Proposition~A.6]{MW16}}]\label{para:sobolev into besov}
For every $\alpha\in(0,1]$ and $p\in[1,\infty]$, we have
$$
\|u\|_{\B_p^\alpha}\lesssim\|u\|_{L^p}^{1-\alpha}\|\nabla u\|_{L^p}^\alpha+\|u\|_{L^p},
$$
where $\nabla u=(\partial_1 u,\partial_2 u,\partial_3 u)$ is the gradient of $u$ in the sense of distributions.
\end{prop}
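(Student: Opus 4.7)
The plan is to bound $\|\Delta_j u\|_{L^p}$ by two competing estimates and interpolate between them. By definition,
\[
\|u\|_{\B_{p,\infty}^\alpha}=\sup_{j\ge-1}2^{j\alpha}\|\triangle_j u\|_{L^p},
\]
so I would separate the low-frequency block $j=-1$ from the high-frequency blocks $j\ge 0$ and treat them in different ways.

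For $j=-1$, the projector $\triangle_{-1}$ acts as convolution against a fixed smooth kernel, hence $\|\triangle_{-1}u\|_{L^p}\lesssim\|u\|_{L^p}$, and $2^{-\alpha}\|\triangle_{-1}u\|_{L^p}\lesssim\|u\|_{L^p}$ gives the additive $\|u\|_{L^p}$ term on the right-hand side. For $j\ge 0$, I would use two facts: first, the uniform $L^p$-boundedness of the dyadic blocks, yielding $\|\triangle_j u\|_{L^p}\lesssim \|u\|_{L^p}$; second, a Bernstein-type inverse inequality, which says that since $\triangle_j u$ has Fourier support in an annulus of size $2^j$, one has $\|\triangle_j u\|_{L^p}\lesssim 2^{-j}\|\nabla\triangle_j u\|_{L^p}\lesssim 2^{-j}\|\nabla u\|_{L^p}$ (the last step again by $L^p$-boundedness of $\triangle_j$ applied to $\nabla u$).

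The final step is a simple geometric interpolation between these two bounds with weights $1-\alpha$ and $\alpha$:
\[
\|\triangle_j u\|_{L^p}=\|\triangle_j u\|_{L^p}^{1-\alpha}\cdot\|\triangle_j u\|_{L^p}^{\alpha}\lesssim\|u\|_{L^p}^{1-\alpha}\bigl(2^{-j}\|\nabla u\|_{L^p}\bigr)^{\alpha}=2^{-j\alpha}\|u\|_{L^p}^{1-\alpha}\|\nabla u\|_{L^p}^{\alpha}.
\]
Multiplying by $2^{j\alpha}$ gives a bound uniform in $j\ge 0$, and combining with the $j=-1$ contribution yields the claim after taking the supremum over $j$.

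I do not expect any real obstacle: the proof is a standard Littlewood--Paley interpolation argument, and both ingredients (Bernstein's inequality for functions with compactly supported Fourier transform, and $L^p$-boundedness of the dyadic blocks) are classical and stated in the references already cited (e.g., \cite{BCD}). The only mildly delicate point is checking that the constants are uniform in $p\in[1,\infty]$, which follows from the fact that the convolution kernels $\check\rho_j$ satisfy an $L^1$-bound independent of $j$ after scaling, so Young's inequality gives $L^p\to L^p$ bounds with constants independent of $p$ and $j$.
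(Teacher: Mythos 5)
Your proof is correct and follows the standard Littlewood--Paley interpolation argument, which is the same route taken in the cited reference \cite[Proposition~A.6]{MW16}: treat the $j=-1$ block by the uniform $L^p$-boundedness of $\triangle_{-1}$, and for $j\ge 0$ combine the trivial bound $\|\triangle_j u\|_{L^p}\lesssim\|u\|_{L^p}$ with the inverse Bernstein bound $\|\triangle_j u\|_{L^p}\lesssim 2^{-j}\|\nabla u\|_{L^p}$ via the elementary inequality $\min(a,b)\le a^{1-\alpha}b^{\alpha}$. The only point worth underlining is that the inverse Bernstein estimate genuinely requires the Fourier support of $\triangle_j u$ to lie in an annulus bounded away from the origin, which is exactly why the $j=-1$ block must be peeled off first; you have handled this correctly.
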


We summarize some important estimates of the paraproduct and the resonant.

\begin{prop}[{\cite[Theorem~3.17]{MW15}}]\label{para:paraproduct and resonant}
Let $p,p_1,p_2,q,q_1,q_2\in[1,\infty]$ be such that $\frac1p=\frac1{p_1}+\frac1{p_2}$ and $\frac1q=\frac1{q_1}+\frac1{q_2}$.
\begin{enumerate}
\setlength{\itemsep}{1mm}
\item For every $\alpha\in\R$, $\|u\pl v\|_{\B_{p,q}^\alpha}\lesssim\|u\|_{L^{p_1}}\|u\|_{\B_{p_2,q}^\alpha}$.
\item For every $\alpha<0$ and $\beta\in\R$, $\|u\pl v\|_{\B_{p,q}^{\alpha+\beta}}\lesssim\|u\|_{\B_{p_1,q_1}^\alpha}\|v\|_{\B_{p_2,q_2}^\beta}$.
\item If $\alpha+\beta>0$, then $\|u\rs v\|_{\B_{p,q}^{\alpha+\beta}}\lesssim\|u\|_{\B_{p_1,q_1}^\alpha}\|v\|_{\B_{p_2,q_2}^\beta}$.
\end{enumerate}
\end{prop}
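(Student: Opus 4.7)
The plan is to exploit the standard Fourier-support properties of Littlewood--Paley blocks. For any $i,j$, the Fourier support of $\triangle_i u\cdot\triangle_j v$ lies in a ball of radius $\sim 2^{\max(i,j)}$, and if $|i-j|\ge 2$ it actually lies in an annulus of scale $2^{\max(i,j)}$. Hence for the paraproduct $u\pl v=\sum_j S_{j-1}u\,\triangle_j v$ (with $S_{j-1}=\sum_{i\le j-2}\triangle_i$), only $j$ with $|j-k|\le N$ for a universal $N$ contribute to $\triangle_k(u\pl v)$; for the resonant $u\rs v$, only $j\ge k-N$ contribute. After this reduction each $\B^\gamma_{p,q}$ norm becomes a weighted $\ell^q$ sum of $L^p$ norms of dyadic blocks, and the proof reduces to H\"older on the Lebesgue side together with H\"older/Young on the sequence side.

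For (1), H\"older and the uniform $L^{p_1}$-boundedness of $S_{j-1}$ give
$$
\|\triangle_k(u\pl v)\|_{L^p}\lesssim\sum_{|j-k|\le N}\|S_{j-1}u\|_{L^{p_1}}\|\triangle_j v\|_{L^{p_2}}\lesssim\|u\|_{L^{p_1}}\sum_{|j-k|\le N}\|\triangle_j v\|_{L^{p_2}},
$$
after which multiplying by $2^{k\alpha}$ and taking $\ell^q_k$ yields the bound, paying only a finite constant for the shift between $k$ and $j$. For (2), where $u\in\B^\alpha_{p_1,q_1}$ with $\alpha<0$, I would write
$$
\|S_{j-1}u\|_{L^{p_1}}\le\sum_{i\le j-2}2^{-i\alpha}\bigl(2^{i\alpha}\|\triangle_i u\|_{L^{p_1}}\bigr),
$$
where the crucial point is that $\alpha<0$ makes the one-sided geometric factor $\sum_{i\le j-2}2^{-i\alpha}\lesssim 2^{-j\alpha}$ converge, producing a weight that cancels with $2^{k\beta}\sim 2^{j\beta}$ to give the correct order $\alpha+\beta$. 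A discrete Young's inequality in the resulting one-sided convolution, followed by H\"older in $\ell^{q_1}\cdot\ell^{q_2}=\ell^q$, then splits the bound into $\|u\|_{\B^\alpha_{p_1,q_1}}\|v\|_{\B^\beta_{p_2,q_2}}$.

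For (3), the Fourier-support reduction combined with H\"older gives
$$
2^{k(\alpha+\beta)}\|\triangle_k(u\rs v)\|_{L^p}\lesssim\sum_{j\ge k-N}2^{(k-j)(\alpha+\beta)}\bigl(2^{j\alpha}\|\widetilde{\triangle}_j u\|_{L^{p_1}}\bigr)\bigl(2^{j\beta}\|\triangle_j v\|_{L^{p_2}}\bigr),
$$
where $\widetilde{\triangle}_j=\triangle_{j-1}+\triangle_j+\triangle_{j+1}$. The hypothesis $\alpha+\beta>0$ is essential: it makes the one-sided sequence $(2^{m(\alpha+\beta)}\mathbf{1}_{m\le N})_m$ belong to $\ell^1$, so a discrete Young's inequality transfers the $\ell^q_k$-norm onto the $j$-side, after which H\"older in $\ell^{q_1}\cdot\ell^{q_2}=\ell^q$ separates the two factors into $\|u\|_{\B^\alpha_{p_1,q_1}}\|v\|_{\B^\beta_{p_2,q_2}}$. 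The main obstacle is purely bookkeeping: one must check that the universal shift $N$ depends only on the partition of unity $\{\rho_j\}$ (not on $u,v$), and track how the sign assumptions on $\alpha$ in (2) and on $\alpha+\beta$ in (3) are precisely what guarantee $\ell^1$-summability of the resulting geometric convolution kernels. Everything else is a direct application of the standard H\"older/Young inequalities on dyadic sequences.
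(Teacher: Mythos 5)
Your argument is correct, and it is precisely the standard Littlewood--Paley/Bony estimate (Fourier-support reduction, H\"older in $L^p$, then discrete Young and H\"older on the dyadic sequences, with $\alpha<0$ resp.\ $\alpha+\beta>0$ ensuring $\ell^1$-summability of the one-sided geometric kernels); the paper itself gives no proof and simply cites \cite[Theorem~3.17]{MW15}, whose argument is the same one you reproduced. One small remark: statement (1) as printed in the paper has a typo, $\|u\|_{L^{p_1}}\|u\|_{\B_{p_2,q}^\alpha}$ should read $\|u\|_{L^{p_1}}\|v\|_{\B_{p_2,q}^\alpha}$, and your proof establishes the corrected version.
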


\begin{prop}[{\cite[Proposition~A.9]{MW16}}]
Let $\alpha<1$, $\beta,\gamma\in\R$ and $p,p_1,p_2,p_3\in[1,\infty]$ be such that $\beta+\gamma<0$, $\alpha+\beta+\gamma>0$ and $\frac1p=\frac1{p_1}+\frac1{p_2}+\frac1{p_3}$. Let $R$ be the trilinear map
$$
R(u,v,w)=(u\pl v)\rs w-u(v\rs w)
$$
defined for $u,v,w\in C^\infty(\T^3,\C)$. Then $R$ is uniquely extended to a continuous trilinear map from $\B_{p_1}^\alpha\times\B_{p_2}^\beta\times\B_{p_3}^\gamma$ to $\B_p^{\alpha+\beta+\gamma}$.
\end{prop}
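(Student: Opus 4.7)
The plan is to prove the quantitative trilinear estimate
$$\|R(u,v,w)\|_{\B_p^{\alpha+\beta+\gamma}}\lesssim\|u\|_{\B_{p_1}^\alpha}\|v\|_{\B_{p_2}^\beta}\|w\|_{\B_{p_3}^\gamma}$$
for smooth $u,v,w$; the continuous extension to all of $\B_{p_1}^\alpha\times\B_{p_2}^\beta\times\B_{p_3}^\gamma$ then follows by density, since the paper defines these spaces as completions of $C^\infty(\T^3,\C)$. The structural point is that in the regime $\beta+\gamma<0$ neither summand of $R$ is individually amenable to Proposition~\ref{para:paraproduct and resonant}(3) (the resonant $v\rs w$ is not a classical function), yet the explicit commutator structure of $R$ produces a cancellation that recovers summability under the weaker hypothesis $\alpha+\beta+\gamma>0$.

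The first step is a Littlewood--Paley expansion. Writing $u\pl v=\sum_j S_{j-2}u\,\Delta_j v$ and noting that each summand is Fourier-localized in an annulus of size $\sim 2^j$, one reorganizes $(u\pl v)\rs w$ as a sum indexed by a common scale $j$ at which both $v$ and $w$ are excited. Matching term-by-term with the analogous expansion $u(v\rs w)=\sum_{|l-j|\le 1}u\,\Delta_j v\,\Delta_l w$, the difference $R(u,v,w)$ splits cleanly into two kinds of contributions: a \emph{high-frequency remainder}, in which $u$ is replaced by its tail $u-S_{j-2}u=\sum_{i\ge j-1}\Delta_i u$, and a \emph{commutator piece} measuring the failure of $\Delta_j$ to commute with multiplication by the low-frequency factor $S_{j-2}u$. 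For the remainder piece, iterated H\"older together with the dyadic bounds $\|\Delta_i u\|_{L^{p_1}}\lesssim 2^{-i\alpha}\|u\|_{\B_{p_1}^\alpha}$ (and analogously for $v,w$) provides the desired block estimate on $\|\Delta_k R(u,v,w)\|_{L^p}$, the summation in $i$ being rescued by the fact that $\Delta_k(\Delta_i u\,\Delta_j v\,\Delta_l w)$ forces $i\sim k$ as soon as $i\gg j$.

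The main obstacle is the commutator piece, where one must extract the missing factor $2^{-k\alpha}$ from $S_{j-2}u$ although this is a \emph{low-frequency} object with no obvious smallness at scale $2^{-k}$. The tool will be the pointwise integral representation
$$\Delta_j(fg)(x)-f(x)\,\Delta_j g(x)=\int K_j(x-y)\bigl(f(y)-f(x)\bigr)g(y)\,dy,$$
applied with $f=S_{j-2}u$ and $g=\Delta_j v\,\Delta_l w$, combined with the H\"older-type estimate $|f(y)-f(x)|\lesssim|x-y|^\alpha\|f\|_{\B_\infty^\alpha}$, which holds \emph{precisely because} $\alpha<1$ (for $\alpha\ge 1$ a first-order Taylor expansion would be required, invalidating the paraproduct bookkeeping). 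Together with the kernel bound $\int|K_j(z)|\,|z|^\alpha\,dz\lesssim 2^{-j\alpha}$, this delivers the sought factor. Once both pieces give a block estimate of the form $\|\Delta_k R(u,v,w)\|_{L^p}\lesssim 2^{-k(\alpha+\beta+\gamma)}\|u\|_{\B_{p_1}^\alpha}\|v\|_{\B_{p_2}^\beta}\|w\|_{\B_{p_3}^\gamma}$, taking $\sup_k$ yields the $\B_p^{\alpha+\beta+\gamma}$ estimate; the hypothesis $\alpha+\beta+\gamma>0$ is used exactly to guarantee convergence of the residual sums over scales in the decomposition.
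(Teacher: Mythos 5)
The paper does not prove this statement itself; it cites it verbatim from \cite[Proposition~A.9]{MW16} (which in turn traces back to Lemma~2.4 of Gubinelli--Imkeller--Perkowski \cite{GIP}), so there is no in-paper proof to compare against. Taken on its own terms, your sketch follows the standard route: Littlewood--Paley expand the paraproduct, split the difference from $u(v\rs w)$ into a high-frequency tail in which $u-S_{j-2}u$ appears (with the dyadic sums closed using $\beta+\gamma<0$ in the regime $i\gg j$ and $\alpha+\beta+\gamma>0$ in the regime $i\sim j$) and a commutator piece, and estimate the commutator via the kernel representation of $\Delta_j(fg)-f\Delta_j g$. That is the right architecture.

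The genuine gap is in the commutator estimate. You invoke the pointwise H\"older bound $|f(y)-f(x)|\lesssim|x-y|^\alpha\|f\|_{\B_\infty^\alpha}$, which is an $L^\infty$ statement and delivers the missing factor $2^{-j\alpha}$ only when $p_1=\infty$. The proposition is stated for arbitrary $p_1\in[1,\infty]$, and $u\in\B_{p_1}^\alpha$ with $p_1<\infty$ has no pointwise H\"older modulus; converting $\|S_{j-2}u\|_{\B_\infty^\alpha}$ to $\|u\|_{\B_{p_1}^\alpha}$ via Besov embedding costs $3/p_1$ derivatives and destroys the count. The correct tool is the $L^{p_1}$ modulus of continuity $\|f(\cdot-z)-f\|_{L^{p_1}}\lesssim|z|^\alpha\|f\|_{\B_{p_1,\infty}^\alpha}$, valid precisely for $0<\alpha<1$ (and indeed here $\alpha>-(\beta+\gamma)>0$), applied \emph{after} taking the $L^p$ norm in $x$ and passing it under the kernel integral by Minkowski's integral inequality, followed by H\"older in the remaining factors. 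A secondary imprecision: the commutator that actually arises acts on $\Delta_j v$ alone (one compares $\Delta_m(S_{j-2}u\,\Delta_j v)\,\Delta_l w$ with $S_{j-2}u\,\Delta_m\Delta_j v\,\Delta_l w$), whereas you set $g=\Delta_j v\,\Delta_l w$, which is a different operator; the block estimate happens to have the same shape, but the bookkeeping should be corrected. Both issues are repairable without changing the overall strategy.
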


We summarize the regularizing effects of the heat semigroup $(e^{t(i+\mu)\Delta})_{t\ge0}$ generated by the operator $(i+\mu)\Delta$.

\begin{prop}[{\cite[Propositions~3.11 and 3.12]{MW15}}]
Let $\alpha\in\R$, $p,q\in[1,\infty]$ and $\mu>0$.
\begin{enumerate}
\setlength{\itemsep}{1mm}
\item For every $\delta\ge0$, $\|e^{t(i+\mu)\Delta}u\|_{\B_{p,q}^{\alpha+2\delta}}\lesssim t^{-\delta}\|u\|_{\B_{p,q}^\alpha}$ uniformly over $t>0$.
\item For every $\delta\in[0,1]$, $\|(e^{t(i+\mu)\Delta}-1)u\|_{\B_{p,q}^{\alpha-2\delta}}\lesssim t^\delta\|u\|_{\B_{p,q}^\alpha}$ uniformly over $t>0$.
\end{enumerate}
\end{prop}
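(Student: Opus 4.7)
The plan is to work block by block via the Littlewood--Paley decomposition; the only novelty compared with the usual heat-semigroup estimates is that the Fourier symbol of $e^{t(i+\mu)\Delta}$ is oscillatory, but its modulus $e^{-t\mu(2\pi)^2|k|^2}$ gives the same dyadic decay as in the real case. First I would fix $j\ge-1$ and pick a smooth auxiliary $\tilde\rho_j$ equal to $1$ on $\mathrm{supp}\,\rho_j$ and supported in a slightly enlarged annulus (a ball if $j=-1$) of scale $2^j$. Since $e^{t(i+\mu)\Delta}$ is a Fourier multiplier and commutes with $\triangle_j$, one can write $\triangle_je^{t(i+\mu)\Delta}u=K_{j,t}*\triangle_ju$, where $K_{j,t}$ is the torus kernel whose Fourier coefficients are $\tilde\rho_j(k)e^{-t(i+\mu)(2\pi)^2|k|^2}$. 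By Young's convolution inequality, both parts of the statement reduce to an $L^1$ bound on $K_{j,t}$.

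To bound the kernel I would rescale $\xi=2^j\eta$, which (via Poisson summation) identifies $K_{j,t}$ with the periodization of $2^{3j}\check K_0^{(s)}(2^j\cdot)$ with $s=t2^{2j}$, where $\check K_0^{(s)}$ is the $\R^3$-kernel with symbol $\tilde\rho_0(\eta)e^{-s(i+\mu)(2\pi)^2|\eta|^2}$. The symbol is smooth and compactly supported on an annulus, and its $C^N$ norm is controlled by $C_{N,\mu}(1+s)^Ne^{-cs\mu}$, since each derivative falling on the exponential produces at most a factor $(1+s)$ while $\mathrm{Re}\bigl(-s(i+\mu)(2\pi)^2|\eta|^2\bigr)\le -cs\mu$ on $\mathrm{supp}\,\tilde\rho_0$. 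Integrating by parts $N$ times in the inverse Fourier transform yields $|\check K_0^{(s)}(x)|\lesssim_{N,\mu}(1+s)^Ne^{-cs\mu}(1+|x|)^{-N}$, and absorbing the polynomial factor into a slightly weaker exponential gives
\[
\|K_{j,t}\|_{L^1(\T^3)}\lesssim_\mu e^{-c'\mu t2^{2j}}
\]
uniformly in $j$ and $t$; the periodization is harmless because of the rapid spatial decay.

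For part (1), this block bound yields $\|\triangle_je^{t(i+\mu)\Delta}u\|_{L^p}\lesssim e^{-c'\mu t2^{2j}}\|\triangle_ju\|_{L^p}$; multiplying by $2^{j(\alpha+2\delta)}$ and applying the elementary inequality $x^{2\delta}e^{-c'\mu tx^2}\lesssim_{\delta,\mu}t^{-\delta}$ at $x=2^j$, the conclusion follows after taking $\ell^q$ norm in $j$. For part (2) I would treat the endpoints separately: $\delta=0$ combines (1) at $\delta=0$ with the triangle inequality, while for $\delta=1$ I would write
\[
e^{-t(i+\mu)(2\pi)^2|\xi|^2}-1=-t(i+\mu)(2\pi)^2|\xi|^2\int_0^1e^{-\sigma t(i+\mu)(2\pi)^2|\xi|^2}\,d\sigma
\]
and rerun the kernel analysis with the extra factor $t|\xi|^2\lesssim t2^{2j}$ present on $\mathrm{supp}\,\tilde\rho_j$, obtaining $\|\triangle_j(e^{t(i+\mu)\Delta}-1)u\|_{L^p}\lesssim t2^{2j}\|\triangle_ju\|_{L^p}$. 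Intermediate $\delta\in(0,1)$ follow either by complex interpolation between these endpoints or by comparing the two bounds directly.

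The main (mild) obstacle is the polynomial-in-$s$ growth produced when differentiating the oscillatory exponential, which is absent in the purely dissipative case. It is harmless because the exponential decay of the modulus on $\mathrm{supp}\,\tilde\rho_0$ dominates once $s$ is moderately large, while for bounded $s$ the kernel bound reduces to the smoothness of $\tilde\rho_0$ alone; the choice of $c'<c$ lets us absorb $(1+s)^N$ into a slightly weaker exponential uniformly in $s>0$.
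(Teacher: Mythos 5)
The paper gives no proof here: the proposition is cited directly from \cite[Propositions~3.11 and 3.12]{MW15}, so there is no in-paper argument to compare against. Your approach is the standard Littlewood--Paley kernel estimate and is, in substance, the argument behind those propositions, so it is the right thing to do.

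One slip worth fixing: the claimed bound $\|K_{j,t}\|_{L^1(\T^3)}\lesssim e^{-c'\mu t2^{2j}}$ is \emph{not} uniform in $j$ because it fails at $j=-1$. There $\tilde\rho_{-1}$ is supported on a ball containing the origin, so $\Re\bigl(-s(i+\mu)(2\pi)^2|\eta|^2\bigr)$ is not bounded above by a strictly negative multiple of $s$ on the support; in fact the Fourier coefficient $\hat K_{-1,t}(0)=\tilde\rho_{-1}(0)=1$ forces $\|K_{-1,t}\|_{L^1}\ge 1$ for all $t$. The correct low-frequency bound is simply $\|K_{-1,t}\|_{L^1}\lesssim 1$, and this is enough: the factor $2^{j(\alpha+2\delta)}/2^{j\alpha}=2^{2\delta j}$ is a constant at $j=-1$, so the $\ell^q$ sum closes provided one seeks the estimate uniformly over $t$ in a bounded interval. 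Relatedly, the statement ``uniformly over $t>0$'' cannot hold verbatim for $\delta>0$ on the torus, since $e^{t(i+\mu)\Delta}$ fixes constants while $t^{-\delta}\to0$ as $t\to\infty$; the implicit constant has to depend on a finite horizon $T$ (which is how it is used throughout the paper, always on $[0,T]$, typically with an additional $e^{-(c+1)t}$ from $L_\mu^c=(i+\mu)\Delta-(c+1)$). Apart from this, the rescaling/Poisson-summation reduction, the $C^N$ symbol bound $C_{N,\mu}(1+s)^N e^{-cs\mu}$ accounting for the oscillatory phase (possible precisely because $|\eta|\sim1$ on $\mathrm{supp}\,\tilde\rho_0$ for $j\ge0$), its absorption into a weaker exponential, the $\delta=1$ case via the integral representation of $e^{-tA|\xi|^2}-1$, and the interpolation $\min(1,t2^{2j})\le (t2^{2j})^\delta$ for $\delta\in[0,1]$ are all correct.
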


\begin{prop}[{\cite[Proposition~A.15]{MW16}}]
Let $\alpha<1$, $\beta\in\R$, $\delta\ge0$, and $p,p_1,p_2\in[1,\infty]$ be such that $\frac1p=\frac1{p_1}+\frac1{p_2}$. Define
\[[e^{t(i+\mu)\Delta},u\pl]v=e^{t(i+\mu)\Delta}(u\pl v)-u\pl e^{t(i+\mu)\Delta}v.\]
Then we have
\[\|[e^{t(i+\mu)\Delta},u\pl]v\|_{\B_p^{\alpha+\beta+2\delta}}\lesssim t^{-\delta}\|u\|_{\B_{p_1}^\alpha}\|v\|_{\B_{p_2}^\beta}\]
uniformly over $t>0$.
\end{prop}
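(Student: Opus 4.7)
My plan is the standard paradifferential commutator argument, which combines a Littlewood--Paley block decomposition with a mean-value identity for the heat-multiplier symbol.

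Since $e^{t(i+\mu)\Delta}$ is a Fourier multiplier and therefore commutes with each $\triangle_j$, writing $S_{j-1}=\sum_{i\le j-2}\triangle_i$ gives
$$[e^{t(i+\mu)\Delta},u\pl]v=\sum_{j\ge-1}\Lambda_j,\qquad \Lambda_j:=e^{t(i+\mu)\Delta}(S_{j-1}u\cdot\triangle_j v)-S_{j-1}u\cdot e^{t(i+\mu)\Delta}\triangle_j v.$$
Each $\Lambda_j$ is Fourier-localized in an annulus of radius $\sim 2^j$, so the target norm reduces to $\sup_j 2^{j(\alpha+\beta+2\delta)}\|\Lambda_j\|_{L^p}$, and the whole proposition is reduced to the block-level bound
$$\|\Lambda_j\|_{L^p}\lesssim t\cdot 2^j\cdot e^{-c_\mu t 2^{2j}}\,\|\nabla S_{j-1}u\|_{L^{p_1}}\|\triangle_j v\|_{L^{p_2}}.$$

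Such a block estimate is natural from the Fourier side: the symbol of the block, $e^{-t(i+\mu)|k|^2}-e^{-t(i+\mu)|k_2|^2}$, obeys the mean-value bound
$$\bigl|e^{-t(i+\mu)|k|^2}-e^{-t(i+\mu)|k_2|^2}\bigr|\lesssim t\,\bigl||k|^2-|k_2|^2\bigr|\cdot e^{-c_\mu t\min(|k|,|k_2|)^2}\lesssim t|k_1|\cdot 2^j\cdot e^{-c_\mu t 2^{2j}}$$
for $k=k_1+k_2$ with $|k_1|\lesssim 2^{j-1}$ and $|k_2|\sim 2^j$, the factor $|k_1|$ becoming a derivative on $S_{j-1}u$. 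To convert this into a rigorous $L^p$ bound I would work in physical space via the kernel representation
$$\Lambda_j(x)=\int K_t(y)\bigl(S_{j-1}u(x-y)-S_{j-1}u(x)\bigr)\triangle_j v(x-y)\,dy,$$
apply Taylor's formula $S_{j-1}u(x-y)-S_{j-1}u(x)=-\int_0^1 y\cdot\nabla S_{j-1}u(x-sy)\,ds$, and exploit the identity $yK_t(y)=-2t(\mu+i)\nabla K_t(y)$ (read off by differentiating the complex Gaussian kernel) to integrate by parts. Since $\mu>0$, the modulus $|K_t|$ is a genuine Gaussian of width $\sim\sqrt{t/\mu}$, so all standard $L^1$ and moment estimates apply, and the exponential factor $e^{-c_\mu t 2^{2j}}$ arises from pairing the physical-space identity with the observation that $\Lambda_j$ is frequency-localized at scale $\sim 2^j$.

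Once the block estimate is in hand, Bernstein's inequality $\|\nabla S_{j-1}u\|_{L^{p_1}}\lesssim 2^{j(1-\alpha)}\|u\|_{\B_{p_1}^\alpha}$ (this is the only place where $\alpha<1$ is used, to secure convergence of the low-frequency sum) together with $\|\triangle_j v\|_{L^{p_2}}\lesssim 2^{-j\beta}\|v\|_{\B_{p_2}^\beta}$ gives
$$2^{j(\alpha+\beta+2\delta)}\|\Lambda_j\|_{L^p}\lesssim t^{-\delta}(t 2^{2j})^{1+\delta}e^{-c_\mu t 2^{2j}}\|u\|_{\B_{p_1}^\alpha}\|v\|_{\B_{p_2}^\beta},$$
and because $x^{1+\delta}e^{-c_\mu x}$ is bounded on $[0,\infty)$, the supremum over $j$ yields $\lesssim t^{-\delta}\|u\|_{\B_{p_1}^\alpha}\|v\|_{\B_{p_2}^\beta}$, as required. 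The main obstacle I expect is the careful bookkeeping of the integration-by-parts step that delivers both the Taylor cancellation and the exponential semigroup decay simultaneously; a naive combination of the Taylor bound (good only for $t2^{2j}\le 1$) with the trivial bound plus heat-kernel decay (which loses a factor $t^{-\alpha/2}$ when $\alpha>0$) does not suffice, so the mean-value structure of the symbol must genuinely be used. The complex generator $(i+\mu)\Delta$ introduces no new phenomenon, since all pointwise and $L^p$ bounds reduce to Gaussian estimates on $|K_t|$.
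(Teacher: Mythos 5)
The paper cites this result from \cite[Proposition~A.15]{MW16} without reproducing the proof, so I compare your proposal with the standard argument there. Your overall plan is correct and is in fact the same Littlewood--Paley/paraproduct strategy: decompose the commutator into blocks $\Lambda_j$, note each is frequency-localized in an annulus of radius $\sim 2^j$, establish a block-level $L^p$ bound carrying both a Taylor gain and the semigroup decay $e^{-c_\mu t2^{2j}}$, and sum with Bernstein (using $\alpha<1$ to control $\|\nabla S_{j-1}u\|_{L^{p_1}}$). The symbol mean-value bound and the final bookkeeping $2^{j(\alpha+\beta+2\delta)}\|\Lambda_j\|_{L^p}\lesssim t^{-\delta}(t2^{2j})^{1+\delta}e^{-c_\mu t2^{2j}}$ are both right.

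There is, however, a genuine gap precisely at the step you yourself flag as the obstacle. The identity $yK_t(y)=-2t(i+\mu)\nabla K_t(y)$ together with Taylor and an integration by parts produces the factor $t\cdot2^j$ (one power of $2^j$ from the extra derivative via Bernstein, and $\|K_t\|_{L^1}\lesssim1$), but it gives \emph{no} exponential factor $e^{-c_\mu t2^{2j}}$, because the full heat kernel $K_t$ has nothing to do with the annulus at scale $2^j$. Saying that the exponential then ``arises from pairing the physical-space identity with the frequency localization'' is not an argument: the exponential decay is only available if you exploit the annular support \emph{inside} the kernel estimate, and the Taylor gain and the decay must come from a single bound, not be combined a posteriori (the Taylor bound controls $\|\nabla S_{j-1}u\|$, the decay bound controls $\|S_{j-1}u\|$, and these do not interpolate in general). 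The clean fix, and what the cited proof effectively does, is to replace $K_t$ by the localized kernel $H_{t,j}:=\mathcal{F}^{-1}\bigl[\tilde\rho_j(\cdot)\,e^{-t(i+\mu)|\cdot|^2}\bigr]$ for a slightly wider annular cutoff $\tilde\rho_j$ that is $\equiv1$ on the Fourier supports of $S_{j-1}u\,\triangle_j v$ and $\triangle_j v$. One then has the exact representation $\Lambda_j(x)=\int H_{t,j}(y)\bigl(S_{j-1}u(x-y)-S_{j-1}u(x)\bigr)\triangle_jv(x-y)\,dy$, and by rescaling $H_{t,j}(y)=2^{3j}G(2^jy;\,t2^{2j})$ with $G(\cdot;\tau)$ Schwartz with all seminorms $\lesssim e^{-c_\mu\tau}$, so that $\|\,|y|H_{t,j}\|_{L^1}\lesssim 2^{-j}e^{-c_\mu t2^{2j}}$. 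This yields $\|\Lambda_j\|_{L^p}\lesssim 2^{-j}e^{-c_\mu t2^{2j}}\|\nabla S_{j-1}u\|_{L^{p_1}}\|\triangle_jv\|_{L^{p_2}}$, which is all you need (your stated block bound with prefactor $t2^j$ is stronger for $t2^{2j}\le1$ but unnecessary, and is in any case not what the $K_t$-integration-by-parts route delivers). Apart from this one missing mechanism, the proposal is sound.
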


\section{Paracontrolled CGL equation}\label{section:solution}

We reformulate the stochastic CGL equation \eqref{introduction:CGL} based on the paracontrolled calculus approach and give the local well-posedness result. For details, see \cite[Section~4]{HIN}.

\subsection{Definition of the solution}\label{subsection:Definition of the solution}

We explain how to give a meaning to the equation \eqref{introduction:CGL} based on the method in \cite{MW16}. If the regularity is written as $\alpha^-$ or $\alpha^+$, then it can be replaced by $\alpha-\delta$ or $\alpha+\delta$ for every small $\delta>0$.

Let $\L_\mu=\partial_t-\{(i+\mu)\Delta-1\}$ and rewrite \eqref{introduction:CGL} as
$$
\L_\mu u=-\nu u^2\overline{u}+(\lambda+1)u+\xi.
$$
We think of the noise as the leading term and the nonlinear term as its perturbation. Let $\IX$ be the stationary solution of
$$
\L_\mu \IX=\xi,
$$
then $\IX$ has regularity $(-\frac12)^-$. Let $\JY=\overline{\IX}$. Since we cannot define the products
$$
\CC=(\IX)^2,\quad\CD=\IX\JY,\quad\CCD=(\IX)^2\JY
$$
in usual sense, we now assume that the elements $\CC,\CD$ with regularity $(-1)^-$ and $\CCD$ with regularity $(-\frac32)^-$ are given a priori. If we set $u=u_1+\IX$, then we have the equation
\begin{align*}
\L_\mu u_1&=-\nu(u_1+\IX)^2(\overline{u_1}+\JY)+(\lambda+1)(u_1+\IX)\\
&=-\nu(u_1^2\overline{u_1}+u_1^2\JY+2u_1\overline{u_1}\IX+2u_1\CD+\overline{u_1}\CC+\CCD)+(\lambda+1)(u_1+\IX),\\
&=-\nu(2u_1\CD+\overline{u_1}\CC+\CCD)+P(u_1),
\end{align*}
where
$$
P(u_1)=-\nu(u_1^2\overline{u_1}+u_1^2\JY+2u_1\overline{u_1}\IX)+(\lambda+1)(u_1+\IX).
$$
We continue the decomposition. Let $\ICCD$ be the stationary solution of
$$
\L_\mu\ICCD=\CCD,
$$
then $\ICCD$ has regularity $\frac12^-$. Let $\JDDC=\overline{\ICCD}$. If we set $u_1=u_2-\nu\ICCD$, then we have
$$
\L_\mu u_2=-\nu\{2(u_2-\nu\ICCD)\CD+(\overline{u_2}-\overline{\nu}\JDDC)\CC\}+P(u_2-\nu\ICCD).
$$
Here we can write $P(u_2-\nu\ICCD)$ as
$$
P(u_2-\nu\ICCD)=P_0+P_1(u_2)+P_2(u_2)-\nu u_2^2\conj{u_2},
$$
where
\begin{align*}
P_0&=-\nu(-\nu^2\conj{\nu}(\ICCD)^2\JDDC+\nu^2(\ICCD)^2\JY+2\nu\conj{\nu}\ICCD\JDDC\IX)+(\lambda+1)(-\nu\ICCD+\IX),\\
P_1(u_2)&=-\nu\{u_2(2\nu\conj{\nu}\ICCD\JDDC-2\nu\ICCD\JY-2\conj{\nu}\JDDC\IX)+\conj{u_2}(\nu^2(\ICCD)^2-2\nu\ICCD\IX)\}+(\lambda+1)u_2,\\
P_2(u_2)&=-\nu\{u_2^2(-\conj{\nu}\JDDC+\JY)+2u_2\conj{u_2}(-\nu\ICCD+\IX)\}.
\end{align*}
Although we have the ill-defined terms $\ICCD\JY$, $\JDDC\IX$, $\ICCD\IX$, $(\ICCD)^2\JY$ and $\ICCD\JDDC\IX$, they are well-defined if we assume that the elements
$$
\CICCD=\ICCD\rs\IX,\quad\DICCD=\ICCD\rs\JY
$$
with regularity $0^-$ are given a priori. For example, $\ICCD\IX$ is defined by
$$
\ICCD\IX=\ICCD(\pl+\pr)\IX+\CICCD,
$$
and so are $\ICCD\JY$ and $\JDDC\IX$. For $(\ICCD)^2\JY$, since it is formally decomposed as
\begin{align*}
(\ICCD)^2\JY&=\ICCD(\ICCD\JY)=\ICCD(\ICCD\pl\JY)+\ICCD(\DICCD+\ICCD\pr\JY)\\
&=(\ICCD\pl\JY)\rs\ICCD+(\ICCD\pl\JY)(\pl+\pr)\ICCD+\ICCD(\DICCD+\ICCD\pr\JY)\\
&=2\ICCD\DICCD+R(\ICCD,\JY,\ICCD)+(\ICCD\pl\JY)(\pl+\pr)\ICCD+\ICCD(\ICCD\pr\JY),
\end{align*}
we can regard the last expression as a definition of $(\ICCD)^2\JY$. We define $\ICCD\JDDC\IX$ by a similar way.

For the terms $(u_2-\nu\ICCD)\CD$ and $(\overline{u_2}-\overline{\nu}\JDDC)\CC$, however, since $u_2$ is expected to have regularity $1^-$, they are still ill-defined. In order to overcome this problem, we introduce the decomposition $u_2=v+w$, which solve
\begin{align}
\label{solution:v}\L_\mu v&=-\nu\{2(v+w-\nu\ICCD)\pl\CD+(\overline{v}+\overline{w}-\overline{\nu}\JDDC)\pl\CC\}-cv\\
\label{solution:w}\L_\mu w&=-\nu\{2(v+w-\nu\ICCD)(\rs+\pr)\CD+(\conj{v}+\conj{w}-\conj{\nu}\JDDC)(\rs+\pr)\CC\}\\\notag
&\quad+P(v+w-\nu\ICCD)+cv,
\end{align}
where $c>0$ is a sufficiently large constant defined below. Since $w$ is expected to have regularity $\frac32^-$, the resonant terms $w\rs\CD$ and $\conj{w}\rs\CC$ are well-defined. Although the resonant terms
$$
\CDICCD=\ICCD\rs\CD,\quad\CCJDDC=\JDDC\rs\CC
$$
cannot be defined in usual sense, we assume that they are given a priori as elements with regularity $(-\frac12)^-$. In order to define the resonant terms $v\rs\CD$ and $\conj{v}\rs\CC$, we define $\ICD$ and $\ICC$ as the stationary solutions of
$$
\L_\mu\ICD=\CD,\quad\L_\mu\ICC=\CC,
$$
respectively. Then $\ICD$ and $\ICC$ have regularity $1^-$. Let $\JDC=\conj{\ICD}$ and $\JDD=\conj{\ICC}$. The resonant terms $v\rs\CD$ and $\conj{v}\rs\CC$ are well-defined if the resonants
$$
\CDICD=\ICD\rs\CD,\quad
\CDICC=\ICC\rs\CD,\quad
\CCJDC=\JDC\rs\CC,\quad
\CCJDD=\JDD\rs\CC
$$
are given a priori as elements with regularity $0^-$. Indeed, since we can show that the solution $v$ of \eqref{solution:v} has the form
$$
v=-\nu\{2(v+w-\nu\ICCD)\pl\ICD+(\conj{v}+\conj{w}-\conj{\nu}\JDDC)\pl\ICC\}+\com(v,w),
$$
where $\com(v,w)$ has regularity $1^+$ (see Lemma \ref{local:estimate of com}), we can write the resonants $v\rs\CD$ and $\overline{v}\rs\CC$ as
\begin{multline*}
v\rs\CD=-\nu\{2(v+w-\nu\ICCD)\CDICD+(\conj{v}+\conj{w}-\conj{\nu}\JDDC)\CDICC\\
{}+2R(v+w-\nu\ICCD,\ICD,\CD)+R(\conj{v}+\conj{w}-\conj{\nu}\JDDC,\ICC,\CD)\}+\com(v,w)\rs\CD,
\end{multline*}
and
\begin{multline*}
\conj{v}\rs\CC=-\conj{\nu}\{2(\conj{v}+\conj{w}-\conj{\nu}\JDDC)\CCJDC+(v+w-\nu\ICCD)\CCJDD\\
{}+2R(\conj{v}+\conj{w}-\conj{\nu}\JDDC,\JDC,\CC)+R(v+w-\nu\ICCD,\JDD,\CC)\}+\overline{\com(v,w)}\rs\CC.
\end{multline*}
We have completed the definitions of all terms appeared in the system \eqref{solution:v}-\eqref{solution:w}.

Now we summarize the above argument. We have the well-defined system
\begin{equation}\label{solution:CGLsystem}
\begin{aligned}
\L_\mu v&=F(v,w)-cv,\\
\L_\mu w&=G(v,w)+cv
\end{aligned}
\end{equation}
with initial values $(v(0,\cdot),w(0,\cdot))=(v_0,w_0)$, where
\begin{align*}
F(v,w)=-\nu\{2(v+w-\nu\ICCD)\pl\CD+(\overline{v}+\overline{w}-\overline{\nu}\JDDC)\pl\CC\}
\end{align*}
and
\begin{align*}
G(v,w)&=\sum_{i=1}^8G_{(i)}(v,w),\\
G_{(1)}(v,w)&=-\nu(v+w)^2(\conj{v}+\conj{w}),\\
G_{(2)}(v,w)&=P_2(v+w),\\
G_{(3)}(v,w)&=P_1(v+w)-\nu\{(v+w)(-4\nu\CDICD-\conj{\nu}\CCJDD)+(\conj{v}+\conj{w})(-2\conj{\nu}\CCJDC-2\nu\CDICC)\},\\
G_{(4)}&=P_0-\nu\{\nu\ICCD(4\nu\CDICD+\conj{\nu}\CCJDD)+2\conj{\nu}\JDDC(\conj{\nu}\CCJDC+\nu\CDICC)
-2\nu\ICCD(\rs+\pr)\CD\\
&\quad-\conj{\nu}\JDDC(\rs+\pr)\CC
+4\nu^2R(\ICCD,\ICD,\CD)+2\nu\conj{\nu}R(\JDDC,\ICC,\CD)\\
&\quad+2\conj{\nu}^2R(\JDDC,\JDC,\CC)+\nu\conj{\nu}R(\ICCD,\JDD,\CC)\},\\
G_{(5)}(v,w)&=\nu^2\{4R(v+w,\ICD,\CD)+2R(\conj{v}+\conj{w},\ICC,\CD)\}\\
&\quad+\nu\conj{\nu}\{2R(\conj{v}+\conj{w},\JDC,\CC)+R(v+w,\JDD,\CC)\},\\
G_{(6)}(v,w)&=-\nu\{2\com(v,w)\rs\CD+\conj{\com(v,w)}\rs\CC\},\\
G_{(7)}(v,w)&=-\nu(2w\rs\CD+\conj{w}\rs\CC),\\
G_{(8)}(v,w)&=-\nu\{2(v+w)\pr\CD+(\conj{v}+\conj{w})\pr\CC\}.
\end{align*}

We define the set of drivers which should be given a priori.

\begin{defi}
Let $\kappa>0$. We call a vector of distribution-valued functions on $[0,\infty)$ of the form
\begin{align}
\mathbb{X}&=(\IX,\ICCD,\CICCD,\DICCD,\CD,\CC,\ICD,\ICC,\CDICD,\CDICC,\CCJDC,\CCJDD,\CDICCD,\CCJDDC)\\
&\notag\quad\in C\B_\infty^{-\frac{1}{2}-\kappa}\times \L^{\frac{1}{2}-\kappa,\frac{1}{4}-\frac{1}{2}\kappa}\times (C\B_\infty^{-\kappa})^2\times (C\B_\infty^{-1-\kappa})^2\\
&\notag\quad\quad\times (C\B_\infty^{1-\kappa})^2\times (C\B_\infty^{-\kappa})^4\times (C\B_\infty^{-\frac{1}{2}-\kappa})^2
\end{align}
which satisfies $\L_\mu\ICD=\CD$ and $\L_\mu\ICC=\CC$ a {\em driving vector} of the system \eqref{solution:CGLsystem}. Let $\mathcal{X}_{\text{\rm CGL}}^\kappa$ the set of all driving vectors. For $\mathbb{X}\in\mathcal{X}_{\text{\rm CGL}}^\kappa$ and $T>0$, we define
\begin{align*}
\$\mathbb{X}\$_{\kappa,T}&=\|\IX\|_{C_T\B_\infty^{-\frac{1}{2}-\kappa}}+\|\ICCD\|_{\L_T^{\frac{1}{2}-\kappa,\frac{1}{4}-\frac{1}{2}\kappa}}+\|\CICCD\|_{C_T\B_\infty^{-\kappa}}+\|\DICCD\|_{C_T\B_\infty^{-\kappa}}\\
&\quad+\|\CD\|_{C_T\B_\infty^{-1-\kappa}}+\|\CC\|_{C_T\B_\infty^{-1-\kappa}}+\|\ICD\|_{C_T\B_\infty^{1-\kappa}}+\|\ICC\|_{C_T\B_\infty^{1-\kappa}}\\
&\quad+\|\CDICD\|_{C_T\B_\infty^{-\kappa}}+\|\CDICC\|_{C_T\B_\infty^{-\kappa}}+\|\CCJDC\|_{C_T\B_\infty^{-\kappa}}+\|\CCJDD\|_{C_T\B_\infty^{-\kappa}}\\
&\quad+\|\CDICCD\|_{C_T\B_\infty^{-\frac{1}{2}-\kappa}}+\|\CCJDDC\|_{C_T\B_\infty^{-\frac{1}{2}-\kappa}}.
\end{align*}
\end{defi}

We define the solutions of the system \eqref{solution:CGLsystem}.

\begin{defi}
For $T>0$, we call the pair $(v,w)$ of distribution-valued functions on the time interval $[0,T]$ which satisfies
\begin{equation}\label{solution:mildsolutionvw}
\begin{aligned}
v(t)&=e^{tL_\mu}v_0+\int_0^te^{(t-s)L_\mu}\{F(v,w)-cv\}(s)ds,\\
w(t)&=e^{tL_\mu}w_0+\int_0^te^{(t-s)L_\mu}\{G(v,w)+cv\}(s)ds,
\end{aligned}
\end{equation}
where $L_\mu=(i+\mu)\Delta-1$, the {\em solution} of the system \eqref{solution:CGLsystem} on $[0,T]$ with initial values $(v_0,w_0)$.
\end{defi}

\subsection{Local well-posedness}

We give the local well-posedness result of the system \eqref{solution:CGLsystem} in the space
$$
\mathcal{D}_T^{\kappa,\kappa'}=\L_T^{\frac56-\kappa',1-\kappa',1-\frac{\kappa'}2}\times\L_T^{1-\kappa'+\kappa,\frac32-2\kappa',1-\kappa'},
$$
where $0<\kappa<\kappa'<\frac1{18}$, for a short time $T$ depending on $(v_0,w_0)$ and $\mathbb{X}$. We omit the proof here. For details, see \cite[Section~4]{HIN}.

First we give the estimate of the commutator $\com(v,w)$.

\begin{lemm}[{\cite[Lemma~4.21]{HIN}}]\label{local:estimate of com}
Let $\hat{v}$ be the mild solution of
$$
\L_\mu\hat{v}=F(v,w)-cv
$$
with initial value $\hat{v}(0,\cdot)=v_0$. We define
$$
\com(v,w):=\hat{v}+\nu\{2(v+w-\nu\ICCD)\pl\ICD+(\conj{v}+\conj{w}-\conj{\nu}\JDDC)\pl\ICC\}.
$$
For every $T>0$, $p\in[1,\infty]$ and $\alpha<1+\kappa'$, we have the estimate
\begin{align*}
&\|\com(v,w)(t)\|_{\B_p^{1+\kappa'}}\\
&\lesssim1+t^{-\frac{1+\kappa'-\alpha}{2}}\|v_0\|_{\B_p^\alpha}+t^{-\kappa'}(1+\|v(t)\|_{L^p}+\|w(t)\|_{L^p})\\
&\quad+\int_0^t(t-s)^{-\frac{3+2\kappa}{4}}\|v(s)\|_{\B_p^{\frac{1}{2}+\kappa'}}ds+\int_0^t(t-s)^{-\frac{1+2\kappa'}{2}}\|w(s)\|_{\B_p^{1+2\kappa'}}ds\\
&\quad+\int_0^t(t-s)^{-1-\frac{\kappa+\kappa'}{2}}(\|\delta_{st}v\|_{L^p}+\|\delta_{st}w\|_{L^p})ds,
\end{align*}
uniformly over $t\in[0,T]$, where $\delta_{st}$ is the difference operator $\delta_{st}f:=f(t)-f(s)$. Here the implicit proportionality constant depends only on $\mu,\nu,c,\kappa,\kappa',p,\alpha,T$ and $\$\mathbb{X}\$_{\kappa,T}$.
\end{lemm}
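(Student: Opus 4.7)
The plan is to expand $\com(v,w)(t)$ explicitly using the Duhamel formula for $\hat v$ together with the identities $\ICD(t)=e^{tL_\mu}\ICD(0)+\int_0^t e^{(t-s)L_\mu}\CD(s)\,ds$ and the analogous one for $\ICC$, which follow from $\L_\mu\ICD=\CD$ and $\L_\mu\ICC=\CC$. Writing $f=v+w-\nu\ICCD$ and $g=\conj v+\conj w-\conj\nu\JDDC$ and bringing the paraproducts $2\nu f(t)\pl\ICD(t)$, $\nu g(t)\pl\ICC(t)$ inside the time integral, $\com(v,w)(t)$ splits into five pieces: (a) the free evolution $e^{tL_\mu}v_0$; (b) the ``initial-data paraproducts'' $2\nu f(t)\pl e^{tL_\mu}\ICD(0)$ and $\nu g(t)\pl e^{tL_\mu}\ICC(0)$; (c) the semigroup--paraproduct commutators $\int_0^t[e^{(t-s)L_\mu},f(s)\pl]\CD(s)\,ds$ together with its $g,\CC$ analogue; (d) the time-regularity terms $\int_0^t(f(t)-f(s))\pl e^{(t-s)L_\mu}\CD(s)\,ds$ and its $g,\CC$ analogue; and (e) the absorption residual $-c\int_0^t e^{(t-s)L_\mu}v(s)\,ds$.

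Each piece is then estimated in $\B_p^{1+\kappa'}$ using the tools from Section~\ref{section:para}. For (a), heat smoothing gives $t^{-(1+\kappa'-\alpha)/2}\|v_0\|_{\B_p^\alpha}$. For (b), combining the paraproduct estimate (Proposition~\ref{para:paraproduct and resonant}(i)) with heat smoothing of $\ICD(0),\ICC(0)\in\B_\infty^{1-\kappa}$ yields a bound of order $t^{-(\kappa+\kappa')/2}(\|f(t)\|_{L^p}+\|g(t)\|_{L^p})$, which, using that $\|\ICCD\|,\|\JDDC\|$ are controlled by $\$\mathbb{X}\$_{\kappa,T}$ and that $\kappa<\kappa'$ forces $t^{-(\kappa+\kappa')/2}\le t^{-\kappa'}$ on $[0,T]$, is absorbed into $t^{-\kappa'}(1+\|v(t)\|_{L^p}+\|w(t)\|_{L^p})$. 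For (c), I apply the semigroup--paraproduct commutator estimate with $\beta=-1-\kappa$: taking $\alpha=\tfrac12+\kappa'$ for the $v$-piece of $f$ forces $\delta=\tfrac34+\tfrac\kappa2$, producing the integrand $(t-s)^{-(3+2\kappa)/4}\|v(s)\|_{\B_p^{1/2+\kappa'}}$; for the $w$-piece I instead take $\alpha=1+\kappa-\kappa'$, which gives $\delta=(1+2\kappa')/2$ and, after the embedding $\|w\|_{\B_p^{1+\kappa-\kappa'}}\le\|w\|_{\B_p^{1+2\kappa'}}$, produces $(t-s)^{-(1+2\kappa')/2}\|w(s)\|_{\B_p^{1+2\kappa'}}$. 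For (d), the paraproduct estimate followed by pure heat smoothing from $\B_\infty^{-1-\kappa}$ up to $\B_\infty^{1+\kappa'}$ yields $\|\delta_{st}f\|_{L^p}(t-s)^{-1-(\kappa+\kappa')/2}\|\CD(s)\|_{\B_\infty^{-1-\kappa}}$; the contribution of $\delta_{st}\ICCD$ (and $\delta_{st}\JDDC$) inside $\delta_{st}f$ is bounded via $\|\ICCD\|_{\L_T^{1/2-\kappa,1/4-\kappa/2}}\lesssim\$\mathbb{X}\$_{\kappa,T}$ and absorbed. Finally, (e) is controlled by heat smoothing and absorbed either into the $(3+2\kappa)/4$-integral (with a milder singularity) or into the $+1$.

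The main obstacle is exponent bookkeeping, in particular arranging matters so that Proposition~A.15's constraint $\alpha<1$ on the paraproduct factor is compatible with the regularity $1+2\kappa'$ of $w$; this is exactly where the standing hypothesis $\kappa<\kappa'$ is needed, via the embedding $\B_p^{1+2\kappa'}\hookrightarrow\B_p^{1+\kappa-\kappa'}$ with $1+\kappa-\kappa'<1$. A secondary technicality is cleanly separating the $v,w$-contributions in $f,g$ from the $\ICCD,\JDDC$-contributions, but the latter have bounded norms in the relevant parabolic spaces and hence are absorbed uniformly into the implicit $\$\mathbb{X}\$_{\kappa,T}$-dependent constant and the explicit $+1$ on the right-hand side.
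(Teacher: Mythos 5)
The paper does not prove Lemma~\ref{local:estimate of com} in text (it defers to \cite[Lemma~4.21]{HIN}), but your five-term decomposition --- free evolution of $v_0$, initial-data paraproducts $f(t)\pl e^{tL_\mu}\ICD(0)$, semigroup--paraproduct commutators $[e^{(t-s)L_\mu},f(s)\pl]\CD(s)$, time-regularity remainders $(\delta_{st}f)\pl e^{(t-s)L_\mu}\CD(s)$, and the $-cv$ absorption --- is exactly the standard paracontrolled commutator argument used there, and the exponent bookkeeping checks out (in particular $\alpha=\tfrac12+\kappa'$, $\beta=-1-\kappa$ gives $\delta=\tfrac{3+2\kappa}{4}$ for the $v$-contribution, while $\alpha=1+\kappa-\kappa'<1$ together with $\B_p^{1+2\kappa'}\hookrightarrow\B_p^{1+\kappa-\kappa'}$ gives $\delta=\tfrac{1+2\kappa'}{2}$ for the $w$-contribution, and the $\ICCD$, $\JDDC$ pieces are absorbed via the $\L_T^{\frac12-\kappa,\frac14-\frac\kappa2}$ bound). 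The proposal is correct and takes essentially the same route as the cited proof.
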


We can obtain the local existence of the solution by a standard fixed point argument. The uniqueness and the continuity on initial values and drivers are obtained by standard PDE arguments.

\begin{theo}[{\cite[Theorem 4.26]{HIN}}]\label{local:with singular initial condition}
For every $(v_0,w_0)\in\B_\infty^{-\frac{2}{3}+\kappa'}\times\B_\infty^{-\frac{1}{2}-2\kappa}$ and $\mathbb{X}\in\mathcal{X}_{\text{\rm CGL}}^\kappa$, there exists $T_*\in(0,1]$ continuously depending on $(v_0,w_0,\mathbb{X})$ such that the system \eqref{solution:CGLsystem} has a unique solution $(v,w)\in\mathcal{D}_{T_*}^{\kappa,\kappa'}$ and this solution satisfies
\begin{align*}
\|(v,w)\|_{\mathcal{D}_{T_*}^{\kappa,\kappa'}}\lesssim1+\|v_0\|_{\B_\infty^{-\frac23+\kappa'}}+\|w_0\|_{\B_\infty^{-\frac12-2\kappa}},
\end{align*}
where the implicit constant depends only on $\mu,\nu,\lambda,c,\kappa,\kappa'$ and $\$\mathbb{X}\$_{\kappa,1}$.

Let $T_{\text{\rm sur}}\in(0,\infty]$ be the supremum of times $T$ such that the system \eqref{solution:CGLsystem} has a unique solution $(v,w)\in\mathcal{D}_T^{\kappa,\kappa'}$. If $T_{\text{\rm sur}}<\infty$, then we have
$$
\lim_{T\uparrow T_{\text{\rm sur}}}(\|v\|_{C_T\B_\infty^{-\frac23+\kappa'}}+\|w\|_{C_T\B_\infty^{-\frac12-2\kappa}})=\infty.
$$
Furthermore, this survival time $T_{\text{\rm sur}}$ is lower semicontinuous with respect to $(v_0,w_0,\mathbb{X})$, and if a sequence $(v_0^{(\epsilon)},w_0^{(\epsilon)},\mathbb{X}^{(\epsilon)})$ converge to $(v_0,w_0,\mathbb{X})$ as $\epsilon\downarrow0$, then for the corresponding solutions $(v^{(\epsilon)},w^{(\epsilon)})$ and $(v,w)$, respectively, we have
$$
\lim_{\epsilon\downarrow0}\|(v^{(\epsilon)},w^{(\epsilon)})-(v,w)\|_{\mathcal{D}_T^{\kappa,\kappa'}}=0
$$
for every $T<T_{\text{\rm sur}}$.
\end{theo}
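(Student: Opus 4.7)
The strategy is a standard Picard fixed-point argument for the mild system \eqref{solution:mildsolutionvw} in $\mathcal{D}_{T_*}^{\kappa,\kappa'}$, together with a restart argument for the blowup and continuity statements. The space $\mathcal{D}_T^{\kappa,\kappa'}$ is designed so that the $C_T\B_\infty$ components match the initial data regularities exactly ($-\tfrac23+\kappa'$ for $v$ and $-\tfrac12-2\kappa$ for $w$), while the $\E_T^\eta\B_\infty^\alpha$ components with positive $\alpha$ absorb the singular smoothing of rough data at $t=0$ via the weight $t^\eta$.

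First I would define $\Phi(v,w)$ by the right-hand sides of \eqref{solution:mildsolutionvw} and show that, for $T_*$ small, $\Phi$ maps the ball of radius $R = C(1 + \|v_0\|_{\B_\infty^{-2/3+\kappa'}} + \|w_0\|_{\B_\infty^{-1/2-2\kappa}})$ in $\mathcal{D}_{T_*}^{\kappa,\kappa'}$ to itself and is a contraction there. The free-evolution term $e^{tL_\mu}(v_0,w_0)$ contributes the base $R$-bound via the heat semigroup estimates of Section \ref{section:para}. For the Duhamel parts I would bound each nonlinear piece separately: $F(v,w)$ is a paraproduct against $\CD,\CC \in C_T\B_\infty^{-1-\kappa}$ and is controlled by Proposition \ref{para:paraproduct and resonant}(1); the cubic $G_{(1)}$ and the remaining polynomial pieces in $G_{(2)}, G_{(3)}, G_{(4)}$ use standard multiplier estimates since $v+w$ has positive regularity; the resonant terms in $G_{(5)}, G_{(7)}$ use Proposition \ref{para:paraproduct and resonant}(3) with regularity sum $\tfrac12 - 2\kappa' - \kappa > 0$; the commutator resonant $G_{(6)}$ requires Lemma \ref{local:estimate of com} to place $\com(v,w)$ in $\B_\infty^{1+\kappa'}$ so that its resonant against $\CD \in \B_\infty^{-1-\kappa}$ is well defined; and the paraproduct tail $G_{(8)}$ is handled by Proposition \ref{para:paraproduct and resonant}(2). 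Each estimate leaves a strictly positive power of $T$, which drives the contraction.

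The main obstacle is the bookkeeping of time weights. Each Duhamel estimate introduces a singular kernel $(t-s)^{-\gamma}$ which must be integrated against the $\E_T^\eta$ weight on $v$ or $w$; the exponents $(\eta,\alpha,\delta) = (\tfrac56-\kappa', 1-\kappa', 1-\tfrac{\kappa'}{2})$ for $v$ and $(1-\kappa'+\kappa, \tfrac32-2\kappa', 1-\kappa')$ for $w$ are chosen precisely so that all such integrals converge with a positive remaining power of $T$, and this is where the constraint $\kappa' < \tfrac1{18}$ enters. Particular care is needed for the $\E_T^{\eta,\delta}$ time-Hölder seminorms, where $e^{tL_\mu} - e^{sL_\mu}$ must be split into a regularising and a differencing part, each checked against the $|t-s|^\delta$ denominator, and Lemma \ref{local:estimate of com} itself contains a borderline singular kernel $(t-s)^{-1-(\kappa+\kappa')/2}$ that has to be composed with the resonant estimate for $G_{(6)}$ without losing the gain in $T$.

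With existence, uniqueness, and the a priori bound in hand, the statements about $T_{\text{\rm sur}}$ follow by standard arguments. If $T_{\text{\rm sur}} < \infty$ but the $C_T\B_\infty$ norms of $v$ and $w$ remained bounded as $T \uparrow T_{\text{\rm sur}}$, then restarting the local theorem from $(v(t_0),w(t_0))$ for $t_0$ close enough to $T_{\text{\rm sur}}$ would, by continuity of $T_*$ in the data, produce an extension beyond $T_{\text{\rm sur}}$, contradicting maximality. Lower semicontinuity of $T_{\text{\rm sur}}$ and continuity of $(v_0,w_0,\mathbb{X}) \mapsto (v,w)$ in $\mathcal{D}_T^{\kappa,\kappa'}$ for $T < T_{\text{\rm sur}}$ then follow by covering $[0,T]$ by finitely many intervals shorter than the uniform existence time for data in a neighbourhood of the base point, on each of which the contraction gives Lipschitz dependence on initial data and driver.
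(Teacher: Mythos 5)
Your proposal is a standard fixed-point argument in the weighted solution space followed by a restart/covering argument for blowup and continuity, which is exactly what the paper indicates: the paper itself omits the proof, deferring to \cite[Section 4]{HIN} and simply remarking that local existence follows from ``a standard fixed point argument'' and that uniqueness and continuous dependence follow from ``standard PDE arguments.'' Your reconstruction is consistent with that outline and correctly identifies where the key auxiliary estimate (Lemma~\ref{local:estimate of com} for the commutator term $G_{(6)}$) and the time-weight bookkeeping enter.
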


\begin{rem}\label{local:with smooth initial condition}
If $(v_0,w_0)\in\B_\infty^{1-\kappa'}\times\B_\infty^{\frac{3}{2}-2\kappa'}$, then we can obtain the local well-posedness on the space $\L_T^{1-\kappa',1-\frac{\kappa'}{2}}\times\L_T^{\frac{3}{2}-\kappa',1-\kappa'}$ without explosions at $t=0$ by a similar argument.
\end{rem}

\subsection{Renormalization of the stochastic CGL equation}

We briefly explain the relation between the deterministic system \eqref{solution:CGLsystem} and the renormalized stochastic CGL equation \eqref{introduction:renormalized CGL}. For details, see \cite[Section~4.5]{HIN}.

As stated in Section \ref{section:introduction}, we replace the space-time white noise $\xi$ by a smeared noise $\xi^\epsilon$ which is white in $t$ but smooth in $x$. Since the stationary solution $\IX^\epsilon$ of $\L_\mu\IX^\epsilon=\xi^\epsilon$ is also smooth in $x$, we can define all products appeared in Section \ref{subsection:Definition of the solution} in usual sense. However, in order to define the convergent driving vectors $\mathbb{X}^\epsilon$ as $\epsilon\downarrow0$, we need to introduce the renormalizations of the products.

\begin{theo}[{\cite[Theorem~5.9]{HIN}}]
There exist constants $C_i^\epsilon$ ($i=1,2,3$) such that, if we define $\mathbb{X}^\epsilon$ as in Section \ref{subsection:Definition of the solution} with the additional conditions
\begin{align*}
\CD^\epsilon&=\IX^\epsilon\JY^\epsilon-C_1^\epsilon,\\
\CCD^\epsilon&=(\IX^\epsilon)^2\JY^\epsilon-2C_1^\epsilon\IX^\epsilon,\\
\CCJDD^\epsilon&=\JDD^\epsilon\rs\CC^\epsilon-2C_2^\epsilon,\\
\CDICD^\epsilon&=\ICD^\epsilon\rs\CD^\epsilon-C_3^\epsilon,\\
\CCJDDC^\epsilon&=\JDDC^\epsilon\rs\CC^\epsilon-2C_2^\epsilon\IX^\epsilon,\\
\CDICCD^\epsilon&=\ICCD^\epsilon\rs\CD^\epsilon-2C_3^\epsilon\IX^\epsilon,
\end{align*}
then there exists an $\mathcal{X}_{\text{\rm CGL}}^\kappa$-valued random variable $\mathbb{X}$ which is independent of the choice of $\eta$, and such that
$$
\Ex\$\mathbb{X}^\epsilon-\mathbb{X}\$_{\kappa,T}^p=0
$$
for every $T>0$ and $p\in[1,\infty)$. Furthermore, for the solution $(v^\epsilon,w^\epsilon)$ of the system \eqref{solution:CGLsystem} with respect to the random variable $\mathbb{X}^\epsilon$, the process $u^\epsilon=\IX^\epsilon-\nu\ICCD^\epsilon+v^\epsilon+w^\epsilon$ is a mild solution of the renormalized equation \eqref{introduction:renormalized CGL} with
$$
C^\epsilon=2C_1^\epsilon-2\conj{\nu}C_2^\epsilon-4\nu C_3^\epsilon.
$$
\end{theo}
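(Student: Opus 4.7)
The plan is to split the proof into two essentially independent tasks: (a) constructing the limit $\mathbb{X}$ by showing $L^p(\Omega)$ convergence of each component of $\mathbb{X}^\epsilon$ in the Besov-type norms demanded by $\mathcal{X}_{\text{CGL}}^\kappa$, and (b) unwinding the algebraic construction of $F+G$ in Section~\ref{subsection:Definition of the solution} to show that the unrenormalized process $u^\epsilon=\IX^\epsilon-\nu\ICCD^\epsilon+v^\epsilon+w^\epsilon$ solves \eqref{introduction:renormalized CGL} with the claimed $C^\epsilon$.

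For part (a), every component of $\mathbb{X}^\epsilon$ is a polynomial of degree at most three in the centred complex Gaussian field $\xi^\epsilon$, so after the Wick subtractions in the statement its Fourier modes lie in a fixed inhomogeneous Wiener chaos. By Nelson's hypercontractivity, convergence in $L^p(\Omega)$ for every $p$ follows from convergence in $L^2(\Omega)$, which reduces to power-counting estimates on the second-moment Fourier kernels. The three constants are chosen precisely to subtract the zero-chaos projections,
\begin{align*}
C_1^\epsilon = \Ex[\IX^\epsilon\JY^\epsilon],\quad 2C_2^\epsilon = \Ex[\JDD^\epsilon\rs\CC^\epsilon],\quad C_3^\epsilon = \Ex[\ICD^\epsilon\rs\CD^\epsilon],
\end{align*}
which are constants by space-time stationarity. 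For each component $f^\epsilon$ with target regularity $\alpha$ one then verifies a uniform bound $\sup_k (1+|k|)^{2\alpha+\delta}\,\Ex|\hat f^\epsilon(k)|^2\lesssim 1$ by routine integration of the heat kernel for $(i+\mu)\Delta$ along the chaos tree, and promotes it to $\Ex\|f^\epsilon\|_{\B_\infty^\alpha}^p\lesssim 1$ via the equivalence of Besov norms on a fixed Wiener chaos. The same computation applied to $f^{\epsilon_1}-f^{\epsilon_2}$ produces an $\epsilon^{\kappa/2}$-type small factor and hence the Cauchy property. The temporal regularity demanded by $\L^{\frac12-\kappa,\frac14-\kappa/2}$ for $\ICCD$ is handled by the parallel computation on the time-difference kernel. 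The most intricate diagrams are the fourth-order ones for $\CDICCD^\epsilon$ and $\CCJDDC^\epsilon$, but these have been worked out in detail in \cite{HIN}.

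For part (b), apply $\L_\mu$ to $u^\epsilon$. Using $\L_\mu\IX^\epsilon=\xi^\epsilon$, $\L_\mu\ICCD^\epsilon=\CCD^\epsilon$, and summing the two equations of the system \eqref{solution:CGLsystem} (the $\pm cv^\epsilon$ terms cancel) yields
\begin{align*}
\L_\mu u^\epsilon = \xi^\epsilon - \nu\CCD^\epsilon + F(v^\epsilon,w^\epsilon) + G(v^\epsilon,w^\epsilon).
\end{align*}
The content of Section~\ref{subsection:Definition of the solution} is that $F+G$ was assembled so that if the ambiguous products and resonants were all interpreted in the \emph{classical} pointwise sense---without subtracting any $C_i^\epsilon$---then the right-hand side above would equal $\xi^\epsilon-\nu(u^\epsilon)^2\conj{u^\epsilon}+(\lambda+1)u^\epsilon$, which up to the $-u^\epsilon$ absorbed into $\L_\mu$ is the right-hand side of the formal equation \eqref{introduction:CGL}. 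Substituting the Wick identities $\CD^\epsilon=\IX^\epsilon\JY^\epsilon-C_1^\epsilon$, $\CCD^\epsilon=(\IX^\epsilon)^2\JY^\epsilon-2C_1^\epsilon\IX^\epsilon$, $\CCJDD^\epsilon=\JDD^\epsilon\rs\CC^\epsilon-2C_2^\epsilon$, $\CDICD^\epsilon=\ICD^\epsilon\rs\CD^\epsilon-C_3^\epsilon$ (and the analogous identities for $\CCJDDC^\epsilon$ and $\CDICCD^\epsilon$, which carry an extra $\IX^\epsilon$-factor) into $F+G$ therefore generates explicit linear-in-$u^\epsilon$ corrections, because after the substitutions the remaining $v^\epsilon+w^\epsilon-\nu\ICCD^\epsilon+\IX^\epsilon$ factors reassemble into $u^\epsilon$. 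Summing the signed multiplicities of each renormalized object across $G_{(3)}$--$G_{(5)}$ gives total coefficients $2C_1^\epsilon$, $-2\conj{\nu}C_2^\epsilon$, $-4\nu C_3^\epsilon$ respectively, which is the claimed formula for $C^\epsilon$.

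The main obstacle is the combinatorial bookkeeping in part (b). Each renormalized object enters with various complex prefactors in several places among $G_{(3)}$, $G_{(4)}$, $G_{(5)}$; the higher-order renormalizations in $\CCJDDC^\epsilon$ and $\CDICCD^\epsilon$ feed $C_2^\epsilon$ and $C_3^\epsilon$ contributions back through $G_{(4)}$; and the paraproduct-only terms in $F$ do not affect the scalar constants but must be tracked to verify that the leftover stochastic objects actually recombine into $u^\epsilon$ and not into some other linear combination. The safe route is to tabulate, for each of $\CD^\epsilon$, $\CCD^\epsilon$, $\CCJDD^\epsilon$, $\CDICD^\epsilon$, $\CCJDDC^\epsilon$, $\CDICCD^\epsilon$, its Wick identity together with its total signed multiplicity in $F+G$, and to read $C^\epsilon$ off the table.
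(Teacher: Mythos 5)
The paper does not prove this theorem; it is imported verbatim from \cite[Theorem~5.9]{HIN}, so there is no in-paper proof to compare against. That said, your two-part outline is the standard strategy for statements of this type and is indeed how \cite{HIN} proceeds: (a) for each driver, uniform second-moment Fourier-kernel bounds on a fixed finite sum of Wiener chaoses, promoted to $L^p(\Omega)$ by Gaussian hypercontractivity and then to Besov-norm moment bounds via Kolmogorov-type arguments; (b) an algebraic unwinding of $F+G$ against the Wick identities to identify the scalar correction $C^\epsilon$. Your identification $C_1^\epsilon=\Ex[\IX^\epsilon\JY^\epsilon]$, $2C_2^\epsilon=\Ex[\JDD^\epsilon\rs\CC^\epsilon]$, $C_3^\epsilon=\Ex[\ICD^\epsilon\rs\CD^\epsilon]$ is forced by the requirement that the limiting drivers have vanishing zero-chaos projection, and is correct.

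One point you should tighten before trusting your reading of the coefficients. When you substitute $\IX^\epsilon\JY^\epsilon=\CD^\epsilon+C_1^\epsilon$ and $(\IX^\epsilon)^2\JY^\epsilon=\CCD^\epsilon+2C_1^\epsilon\IX^\epsilon$ into $-\nu\bigl(2u_1\CD^\epsilon+\conj{u_1}\CC^\epsilon+\CCD^\epsilon\bigr)$, the scalar corrections assemble to $+2\nu C_1^\epsilon(u_1+\IX^\epsilon)=+2\nu C_1^\epsilon u^\epsilon$, not $+2C_1^\epsilon u^\epsilon$; the same $\nu$-tracking question arises for the $C_2^\epsilon$ and $C_3^\epsilon$ contributions flowing through $G_{(3)}$--$G_{(5)}$, where the complex prefactors $\nu$ and $\conj{\nu}$ are not interchangeable. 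Reconciling your naive count with the stated formula $C^\epsilon=2C_1^\epsilon-2\conj{\nu}C_2^\epsilon-4\nu C_3^\epsilon$ requires either checking how \cite{HIN} normalizes the $C_i^\epsilon$ or, as you propose, a genuine line-by-line tabulation of signed $\nu$-weighted multiplicities; at the level of the present sketch this reconciliation is not done, and it is exactly the part of the bookkeeping that is easiest to get wrong. Apart from that caveat, your outline is consistent with the cited source.
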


\begin{cor}\label{localexistence:uep to u}
For every $u_0\in\B_\infty^{-\frac23+\kappa'}$, there exists a process $u$ which is independent of the choice of $\eta$, and such that the solution $u^\epsilon$ of the renormalized equation \eqref{introduction:renormalized CGL} with initial value $u_0$ satisfies
$$
\lim_{\epsilon\downarrow0}\|u^\epsilon-u\|_{C_T\B_\infty^{-\frac23+\kappa'}}=0
$$
in probability for every $T<T_{\text{\rm sur}}$, where $T_{\text{\rm sur}}$ is the survival time with respect to the driving vector $\mathbb{X}^\epsilon$ and initial values
$$
v_0^\epsilon=u_0^\epsilon-\IX^\epsilon(0)+\nu\ICCD^\epsilon(0),\quad w_0^\epsilon=0.
$$
\end{cor}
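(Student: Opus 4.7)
The plan is to combine the $L^p$-convergence of the driving vectors $\mathbb{X}^\epsilon\to\mathbb{X}$ established in the preceding theorem with the continuity of the solution map provided by Theorem~\ref{local:with singular initial condition}, and then transport the convergence back through the decomposition $u^\epsilon=\IX^\epsilon-\nu\ICCD^\epsilon+v^\epsilon+w^\epsilon$ using standard Besov embeddings.

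First I would construct the candidate limit. The preceding theorem gives $\$\mathbb{X}^\epsilon-\mathbb{X}\$_{\kappa,T}\to0$ in $L^p$ for every $p\in[1,\infty)$ and $T>0$, hence in probability. In particular the time-zero evaluations satisfy $\IX^\epsilon(0)\to\IX(0)$ in $\B_\infty^{-\frac12-\kappa}$ and $\ICCD^\epsilon(0)\to\ICCD(0)$ in $\B_\infty^{\frac12-\kappa}$ in probability. Choosing $\kappa,\kappa'$ sufficiently small (in particular $\kappa+\kappa'\le\frac16$), both of these spaces embed continuously into $\B_\infty^{-\frac23+\kappa'}$, so setting
\[
v_0:=u_0-\IX(0)+\nu\ICCD(0)\in\B_\infty^{-\frac23+\kappa'},\qquad w_0:=0,
\]
one obtains $(v_0^\epsilon,w_0^\epsilon)\to(v_0,w_0)$ in probability in $\B_\infty^{-\frac23+\kappa'}\times\B_\infty^{-\frac12-2\kappa}$. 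Applying Theorem~\ref{local:with singular initial condition} to $(v_0,w_0,\mathbb{X})$ produces a unique maximal solution $(v,w)$ with survival time $T_{\text{sur}}$, and I set $u:=\IX-\nu\ICCD+v+w$.

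Next I would transfer the convergence to the solutions. By the lower semicontinuity of the survival time stated in Theorem~\ref{local:with singular initial condition}, for any $T<T_{\text{sur}}$ the event $\{T_{\text{sur}}^\epsilon>T\}$ has probability tending to one, and on this event the same theorem yields $\|(v^\epsilon,w^\epsilon)-(v,w)\|_{\mathcal{D}_T^{\kappa,\kappa'}}\to0$ in probability. Decomposing
\[
u^\epsilon-u=(\IX^\epsilon-\IX)-\nu(\ICCD^\epsilon-\ICCD)+(v^\epsilon-v)+(w^\epsilon-w),
\]
I would check term by term that each converges to zero in $C_T\B_\infty^{-\frac23+\kappa'}$ in probability: the first two by the embeddings above, the second using the $C_T\B_\infty^{\frac12-\kappa}$ component of the $\L_T^{\frac12-\kappa,\frac14-\frac\kappa2}$ norm; the $v$-term because $\mathcal{D}_T^{\kappa,\kappa'}$ contains the component $C_T\B_\infty^{1-\kappa'-2(\frac56-\kappa')}=C_T\B_\infty^{-\frac23+\kappa'}$; and the $w$-term because its $\L_T^{1-\kappa'+\kappa,\frac32-2\kappa',1-\kappa'}$ norm controls $C_T\B_\infty^{-\frac12-2\kappa}$, which embeds into $C_T\B_\infty^{-\frac23+\kappa'}$. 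The independence of $u$ from $\eta$ is inherited from that of $\mathbb{X}$, since $u$ is a measurable function of $(u_0,\mathbb{X})$ alone.

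No step presents a genuine analytical obstacle beyond what is already contained in the preceding theorem and in Theorem~\ref{local:with singular initial condition}; the entire argument is a functoriality statement for the continuous solution map. The only care required is the bookkeeping of Besov exponents, both to match the initial data into the space $\B_\infty^{-\frac23+\kappa'}\times\B_\infty^{-\frac12-2\kappa}$ required by the local theory and to ensure that every component of $u^\epsilon-u$ lands in the target space $C_T\B_\infty^{-\frac23+\kappa'}$.
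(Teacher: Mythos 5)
Your argument is correct and is essentially the proof the paper leaves implicit: combine the $L^p$-convergence $\$\mathbb{X}^\epsilon-\mathbb{X}\$_{\kappa,T}\to 0$ with the continuity of the solution map and lower semicontinuity of $T_{\text{sur}}$ from Theorem~\ref{local:with singular initial condition}, then transport the result through $u^\epsilon=\IX^\epsilon-\nu\ICCD^\epsilon+v^\epsilon+w^\epsilon$ via Besov embeddings. The exponent bookkeeping checks out: the $C_T\B_\infty^{\alpha-2\eta}$ component of $\mathcal{D}_T^{\kappa,\kappa'}$ gives exactly $C_T\B_\infty^{-\frac23+\kappa'}$ for $v$ and $C_T\B_\infty^{-\frac12-2\kappa}$ for $w$, and the embeddings $\B_\infty^{-\frac12-\kappa},\B_\infty^{\frac12-\kappa},\B_\infty^{-\frac12-2\kappa}\hookrightarrow\B_\infty^{-\frac23+\kappa'}$ all hold under $0<\kappa<\kappa'<\frac1{18}$.
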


\subsection{A priori estimate of $(v,w)$}

From the above arguments, it is sufficient to show the following theorem in order to prove Theorem \ref{introduction:main theorem}.

\begin{theo}\label{localexistence+:main theorem}
Let $\mu>\frac{1}{2\sqrt{2}}$. Choose sufficiently small $0<\kappa<\kappa'$ depending on $\mu$. For every $T>0$ and $\mathbb{X}\in\mathcal{X}_{\text{\rm CGL}}^\kappa$, there exists sufficiently large $c>0$ depending only on $\mu,\nu,\lambda,\kappa,\kappa',T$ and $\$\mathbb{X}\$_{\kappa,T}$, such that, any solution $(v,w)$ of the system \eqref{solution:CGLsystem} on $[0,T]$ with initial value $(v_0,w_0)\in\B_\infty^{-\frac23+\kappa'}\times\B_\infty^{-\frac12-2\kappa}$ satisfies
\begin{align}\label{localexistence+:apriori in infty}
\|v\|_{C_T\B_\infty^{-\frac23+\kappa'}}+\|w\|_{C_T\B_\infty^{-\frac12-2\kappa}}\le C
\end{align}
for some finite constant $C>0$ depending only on $\mu,\nu,\lambda,c,\kappa,\kappa',T,\$\mathbb{X}\$_{\kappa,T},\|v_0\|_{\B_\infty^{-\frac23+\kappa'}}$ and $\|w_0\|_{\B_\infty^{-\frac12-2\kappa}}$.
\end{theo}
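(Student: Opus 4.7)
My plan is to prove the theorem by following the three-stage programme announced in Section \ref{section:introduction}: first a spatial estimate of $v$ in terms of $w$, then an $L^{2p}$ energy inequality for $w$ giving an $L^1[0,T]$ bound, and finally a bootstrap to an $L^\infty[0,T]$ bound. Since the statement is an a priori estimate for a solution already assumed to exist on $[0,T]$, I do not need to invoke the blow-up criterion; the task is purely quantitative. The hypothesis $\mu > \frac{1}{2\sqrt{2}}$ enters at the very start, because it is precisely equivalent to $1 + \mu(\mu + \sqrt{1+\mu^2}) > \frac{3}{2}$, and hence to the possibility of choosing $p > \frac{3}{2}$ satisfying \eqref{introduction:p<f(mu)}. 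Fix such $p$ together with correspondingly small $0 < \kappa < \kappa' < \frac{1}{18}$.

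Stage one, to be carried out in Section \ref{section:apriori_v}, controls a suitable $\mathcal{B}_{2p+2,\infty}^{1/2+\kappa}$-type norm of $v$ by norms of $w$ using the mild representation \eqref{solution:mildsolutionvw} for $v$, the Schauder heat estimates, and the paraproduct bounds of Section \ref{section:para}; this is essentially linear since $F$ is linear in $(v,w)$ modulo the a priori drivers. Stage two, the heart of the argument, is an $L^{2p}$ energy inequality: test the equation \eqref{solution:w} for $w$ against $|w|^{2p-2}\overline{w}$, so that the cubic nonlinearity $-\nu|w|^2 w$ produces a coercive contribution proportional to $\|w\|_{L^{2p+2}}^{2p+2}$, with a positive constant exactly by \eqref{introduction:p<f(mu)}. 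Each source term $G_{(i)}$ is then bounded by a small fraction of that coercive term plus the buffer $c \|v\|_{L^{2p}}^{2p}$ provided by the stabiliser, after substituting the paracontrolled skeleton of $v$ from Lemma \ref{local:estimate of com}. Integrating over $[0,T]$ produces the $L^1[0,T]$ estimate \eqref{introduction:L^1[0,T]}, which is the content of Sections \ref{section:apriori_w}--\ref{section:integrable}. Stage three, in Section \ref{section:global}, upgrades the $L^1_t$ bound to the $L^\infty_t$ bound \eqref{introduction:L^infty[0,T]} by iterated Young/convolution arguments on the mild formulations, using the two estimates of Lemma \ref{global:lemm:iterating Young}; the number of iterations is finite for each admissible $p$ even though it diverges as $p \downarrow \frac{3}{2}$.

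From \eqref{introduction:L^infty[0,T]}, the target bound \eqref{localexistence+:apriori in infty} follows by Besov embedding. For $p > \frac{3}{2}$ and small $\kappa,\kappa'$, the embeddings $\mathcal{B}_{2p+2,\infty}^{1/2+\kappa} \hookrightarrow \mathcal{B}_\infty^{-2/3+\kappa'}$ and $\mathcal{B}_{(2p+2)/3,\infty}^{3/2-2\kappa} \hookrightarrow \mathcal{B}_\infty^{-1/2-2\kappa}$ both hold, by the Besov-Sobolev embedding recalled after Proposition \ref{para:sobolev into besov}, so the stronger Lebesgue-based Besov bounds on $v$ and $w$ translate directly into the weak-regularity $\mathcal{B}_\infty^\alpha$ bounds claimed in \eqref{localexistence+:apriori in infty}.

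The hard part will be stage two. Each of the terms $G_{(1)},\dots,G_{(8)}$ contains either rough stochastic inputs, or the rougher companion $v$, or a resonance that is only well-defined through the commutator expansion of Lemma \ref{local:estimate of com}; every such term must be absorbed into the coercive $\|w\|_{L^{2p+2}}^{2p+2}$ through interpolation and Young's inequality with carefully tuned exponents, using the strict slack in \eqref{introduction:p<f(mu)}. A further difficulty is that \cite{MW16} only carried out this energy identity for $p=2$ on $\Phi^4_3$ and the identity contained two extra terms obstructing a direct generalisation; the plan is to reorganise it into the cleaner form of Theorem \ref{dw:goal} so that the absorption goes through for every $p > 1$, which is what opens the way to pushing $p$ down to anything above $\frac{3}{2}$ and consequently $\mu$ down to anything above $\frac{1}{2\sqrt{2}}$.
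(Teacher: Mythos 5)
Your roadmap reproduces the paper's own three-stage programme — control of $v$ by $w$ (Section \ref{section:apriori_v}), the $L^{2p}$ energy inequality and the resulting $L^1[0,T]$ bound (Sections \ref{section:apriori_w}--\ref{section:integrable}), the iterative upgrade to $L^\infty[0,T]$ (Section \ref{section:global}), and the closing Besov embedding — and the final reduction $\mu>\frac{1}{2\sqrt2}\Leftrightarrow 1+\mu(\mu+\sqrt{1+\mu^2})>\frac32$ is exactly the paper's proof of the theorem, so this is essentially the same argument. Two small imprecisions are worth correcting, and one preliminary step is missing. First, condition \eqref{introduction:p<f(mu)} (equivalently \eqref{w:assumption of p and mu}) does not generate the positive coefficient in front of $\|w\|_{L^{2p+2}}^{2p+2}$; that coefficient is simply $\Re\nu>0$, independent of $p$ and $\mu$. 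The constraint is needed instead so that the quadratic form in $\nabla|w|^2$ and $i(w\nabla\conj{w}-\conj{w}\nabla w)$ produced by the complex diffusion $(i+\mu)\Delta$ in Proposition \ref{w:L^2p control of w from PDE} is nonnegative definite — this is the genuine place where the strength of $\mu$ relative to $p$ enters. Second, Theorem \ref{dw:goal} is the simplified time-increment estimate for $\delta_{st}w$ in $L^{(2p+2)/3}$, the analogue of \cite[Theorem~4.1]{MW16} used to control the commutator contribution $\mathcal{I}_{(6)}$; it is not a reorganised energy identity. Finally, the theorem's initial data $(v_0,w_0)\in\B_\infty^{-\frac23+\kappa'}\times\B_\infty^{-\frac12-2\kappa}$ is too rough to feed directly into Sections \ref{section:apriori_v}--\ref{section:global}, which assume $(v_0,w_0)\in\B_\infty^{1-\kappa'}\times\B_\infty^{\frac32-2\kappa'}$; the paper first runs the short-time local theory of Theorem \ref{local:with singular initial condition} to reach the regularised data $(v(T_*),w(T_*))$ and restarts the argument from there, a step your outline should include before invoking those sections.
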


Although we consider the system \eqref{solution:CGLsystem} with different $c>0$ for each fixed final time $T>0$, the renormalized equation \eqref{introduction:renormalized CGL} is irrelevant to the choice of $c$. Theorem \ref{localexistence+:main theorem} implies that the solution $u=\IX-\nu\ICCD+v+w$ does not explode in the space $\B_\infty^{-\frac23+\kappa'}$ until every fixed $T>0$, so that the result of Corollary \ref{localexistence:uep to u} holds for all $T>0$.

We show Theorem \ref{localexistence+:main theorem} in the rest of this \paper\ by the method explained in Section \ref{section:introduction}. Our goal is the a priori $L^\infty[0,T]$ estimate
\begin{align}\label{localexistence:a priori in besov}
\|v\|_{C_T\B_{2p+2}^{\frac12+\kappa'}}+\|w\|_{C_T\B_{\frac{2p+2}3}^{\frac32-2\kappa'}}\le C'<\infty
\end{align}
for $p>\frac32$, instead of the estimate \eqref{localexistence+:apriori in infty}. If the estimate \eqref{localexistence:a priori in besov} is true, then the Besov embeddings
\begin{align*}
&\B_\infty^{-\frac23+\kappa'}\supset\B_{2p+2}^{-\frac23+\frac3{2p+2}+\kappa'}\supset\B_{2p+2}^{\frac12+\kappa'},\\
&\B_\infty^{-\frac12-2\kappa}\supset\B_{\frac{2p+2}3}^{-\frac12+\frac9{2p+2}-2\kappa}\supset\B_{\frac{2p+2}3}^{\frac32-2\kappa'}
\end{align*}
imply the a priori estimate \eqref{localexistence+:apriori in infty}. Additionally, since we already have
$$
\|v(T_*)\|_{\B_\infty^{1-\kappa'}}+\|w(T_*)\|_{\B_\infty^{\frac32-2\kappa'}}\lesssim1+\|v_0\|_{\B_\infty^{-\frac23+\kappa'}}+\|w_0\|_{\B_\infty^{-\frac12-2\kappa}}
$$
from Theorem \ref{local:with singular initial condition}, we assume that {\bf the initial value} $(v_0,w_0)$ {\bf belongs to} $\B_\infty^{1-\kappa'}\times\B_\infty^{\frac32-2\kappa'}$ {\bf in what follows} without loss of generality, by starting the argument from the time $T_*$.

From now on, we fix $T>0$ and $\mathbb{X}\in\mathcal{X}_{\text{\rm CGL}}^\kappa$. In the inequalities shown below, we do not remark the dependences of the proportionality constants on the parameters $\mu,\nu,\lambda,\kappa,\kappa',p,T$ and $\$\mathbb{X}\$_{\kappa,T}$.

\section{A priori estimate of $v$}\label{section:apriori_v}

In this section, we will show that the Besov norms of $v$ and $\com(v,w)$ are controlled by the $L^p$ norm of $w$. The following theorem is obtained by the same arguments as \cite[Theorem~3.1]{MW16}.

\begin{theo}
Let $p\in[1,\infty)$ and $c>0$. Then for every $0\le s\le t\le T$,
\begin{align}
\label{v:estimate of v in L^p}\|v(t)\|_{L^p}&\lesssim e^{-ct}\|v_0\|_{L^p}+\int_0^te^{-c(t-s)}(t-s)^{-\frac{1+\kappa'}{2}}(1+\|w(s)\|_{L^p})ds,\\
\label{v:estimate of v in Besov}\|v(t)\|_{\B_p^{\frac{1}{2}+\kappa'}}&\lesssim\|v_0\|_{\B_p^{\frac{1}{2}+\kappa'}}+\int_0^t(t-s)^{-\frac{3}{4}-\kappa'}(1+\|w(s)\|_{L^p})ds,\\
\label{v:estimate of dv in L^p}\|\delta_{st}v\|_{L^p}&\lesssim(t-s)^{\frac{1+2\kappa}{4}}\|v(s)\|_{\B_p^{\frac{1}{2}+\kappa'}}+\int_s^t(t-r)^{-\frac{1+\kappa'}{2}}(1+\|w(r)\|_{L^p})dr,
\end{align}
where the implicit constants do not depend on $c>0$.
\end{theo}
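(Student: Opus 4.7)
All three bounds stem from the mild formulation of the $v$-equation in \eqref{solution:CGLsystem}, with the drift $-cv$ absorbed into the semigroup:
\[
v(t)=e^{-ct}e^{tL_\mu}v_0+\int_0^t e^{-c(t-s)}e^{(t-s)L_\mu}F(v,w)(s)\,ds.
\]
Every summand of $F(v,w)$ is a paraproduct of an $L^p$-bounded object with either $\CD$ or $\CC\in C_T\B_\infty^{-1-\kappa}$. Applying Proposition \ref{para:paraproduct and resonant}(1), together with the $L^p$-boundedness of $\ICCD,\JDDC$ inherited from $\ICCD\in\L_T^{\frac12-\kappa,\frac14-\frac\kappa2}$, gives the working estimate
\[
\|F(v,w)(s)\|_{\B_p^{-1-\kappa}}\lesssim 1+\|v(s)\|_{L^p}+\|w(s)\|_{L^p}.
\]

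For \eqref{v:estimate of v in L^p}, I would run the heat smoothing from $\B_p^{-1-\kappa}$ to $\B_{p,\infty}^{\kappa'-\kappa}\hookrightarrow L^p$ at cost $(t-s)^{-(1+\kappa')/2}$. This produces the Volterra-type inequality
\[
\|v(t)\|_{L^p}\lesssim e^{-ct}\|v_0\|_{L^p}+\int_0^t e^{-c(t-s)}(t-s)^{-\frac{1+\kappa'}{2}}\bigl(1+\|v(s)\|_{L^p}+\|w(s)\|_{L^p}\bigr)ds,
\]
from which the unwanted $\|v(s)\|_{L^p}$ summand on the right must be removed in a manner that is uniform in $c$. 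The trick is to set $\phi(t):=e^{ct}\|v(t)\|_{L^p}$: multiplying through by $e^{ct}$ kills all exponential weights and reduces the kernel to the unweighted, still integrable $(t-s)^{-(1+\kappa')/2}$, whose $L^1$-norm on $[0,T]$ depends only on $T$ and $\kappa'$. Classical singular Gronwall then closes the estimate with a $c$-independent constant, and dividing back by $e^{ct}$ yields \eqref{v:estimate of v in L^p}.

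The bound \eqref{v:estimate of v in Besov} follows by the same scheme, running the smoothing all the way to $\B_p^{1/2+\kappa'}$: the required order $\delta=\tfrac34+\tfrac{\kappa+\kappa'}{2}\le\tfrac34+\kappa'$ produces the kernel $(t-s)^{-3/4-\kappa'}$, and after the Gronwall step the factors $e^{-ct},e^{-c(t-s)}\le 1$ are simply dropped. For \eqref{v:estimate of dv in L^p}, I would use the semigroup identity
\[
v(t)-v(s)=(e^{(t-s)[L_\mu-c]}-I)v(s)+\int_s^t e^{(t-r)[L_\mu-c]}F(v,w)(r)\,dr.
\]
The integral is estimated exactly as in the proof of \eqref{v:estimate of v in L^p}. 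For the initial difference, the semigroup-difference estimate applied to $v(s)\in\B_p^{1/2+\kappa'}$ with $\delta=(1+2\kappa)/4$ lands in $\B_p^{\kappa'-\kappa}\hookrightarrow L^p$ and delivers the $(t-s)^{(1+2\kappa)/4}\|v(s)\|_{\B_p^{1/2+\kappa'}}$ contribution; the multiplicative factor $e^{-c(t-s)}\le 1$ is harmless.

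The principal technical hurdle is the $c$-uniformity of the implicit constants, which is essential since $c$ will be chosen arbitrarily large in later sections depending on $\$\mathbb{X}\$_{\kappa,T}$. The $e^{ct}$-rescaling above secures this uniformity by reducing the singular-kernel Gronwall argument to one with no exponential weighting, so that the resulting constants depend only on $T$, $\kappa$, $\kappa'$, $p$ and the driver norm.
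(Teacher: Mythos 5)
Your proposal is correct and follows essentially the same architecture as the paper's proof: same mild formulation with $L_\mu^c=(i+\mu)\Delta-(c+1)$, same paraproduct bound $\|F(v,w)(s)\|_{\B_p^{-1-\kappa}}\lesssim 1+\|v(s)\|_{L^p}+\|w(s)\|_{L^p}$, same heat-semigroup smoothing to $\B_p^{\kappa'-\kappa}\hookrightarrow L^p$ (resp.\ $\B_p^{\frac12+\kappa'}$), and a singular Gronwall closure. The only substantive variation is in the Gronwall step: the paper cites \cite[Lemma~3.4]{MW16}, a singular Gronwall lemma adapted to the exponentially weighted kernel $e^{-c(t-s)}(t-s)^{-\beta}$ that produces the decay rate $\underline{c}=c-\text{const}$, which is then traded back for $c$ at the cost of a $T$-dependent constant; you instead pass to $\phi(t)=e^{ct}\|v(t)\|_{L^p}$ and invoke the classical unweighted singular Gronwall, which achieves the same $c$-uniformity more transparently. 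One small caveat: your sentence \emph{``The integral is estimated exactly as in the proof of \eqref{v:estimate of v in L^p}''} elides a step---one cannot rerun the Gronwall for $\|\delta_{st}v\|_{L^p}$; rather, one must substitute the already-proved \eqref{v:estimate of v in L^p} or \eqref{v:estimate of v in Besov} (started from time $s$) for $\|v(r)\|$ inside the Duhamel integral and perform a Fubini exchange, as the paper does. This is an easy fix with the tools you have in hand, and it is what the paper carries out with the intermediate kernel $(t-\tau)^{-\frac{1+6\kappa'}{4}}\lesssim(t-\tau)^{-\frac{1+\kappa'}{2}}$.
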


\begin{proof}
The definition \eqref{solution:mildsolutionvw} of the solution $v$ is equivalent to
$$
v(t)=e^{tL_\mu^c}v_0+\int_0^te^{(t-s)L_\mu^c}F(v,w)(s)ds,
$$
where $L_\mu^c=(i+\mu)\Delta-(c+1)$. For every $\alpha\in(0,1-\kappa)$, we have
\begin{align}\label{v:estimate of v in Besov proof}
\|e^{tL_\mu^c}v_0\|_{\B_p^\alpha}\lesssim e^{-(c+1)t}\|v_0\|_{\B_p^\alpha}
\end{align}
and
\begin{align*}
&\left\|\int_0^te^{(t-s)L_\mu^c}F(v,w)(s)ds\right\|_{\B_p^\alpha}\\
&\lesssim\int_0^te^{-(c+1)(t-s)}(t-s)^{-\frac{\alpha+1+\kappa}2}\|F(v,w)(s)\|_{\B_p^{-1-\kappa}}ds\\
&\lesssim\int_0^te^{-(c+1)(t-s)}(t-s)^{-\frac{\alpha+1+\kappa}2}(1+\|v(s)\|_{\B_p^\alpha}+\|w(s)\|_{L^p})ds.
\end{align*}
Hence by \cite[Lemma~3.4]{MW16}, we have
\begin{align*}
\|v(t)\|_{\B_p^\alpha}\lesssim e^{-\underline{c}t}\|v_0\|_{\B_p^\alpha}+\int_0^te^{-\underline{c}(t-s)}(t-s)^{-\frac{\alpha+1+\kappa}2}(1+\|w(s)\|_{L^p})ds,
\end{align*}
where $\underline{c}=c-[\Gamma(\frac{1-\kappa-\alpha}2)]^{\frac2{1-\kappa-\alpha}}$. Here we can replace $\underline{c}$ by $c$ again because we ignore the factor depending only on $\kappa,\alpha$ and $T$. The second assertion \eqref{v:estimate of v in Besov} is obtained by setting $\alpha=\frac12+\kappa'$ and using $e^{-ct}\le1$. The first assertion \eqref{v:estimate of v in L^p} is obtained by setting $\alpha=\kappa'-\kappa$ and using
$$
\|e^{tL_\mu^c}v_0\|_{L^p}\lesssim e^{-(c+1)t}\|v_0\|_{L^p}
$$
instead of \eqref{v:estimate of v in Besov proof}.

In order to show the third assertion \eqref{v:estimate of dv in L^p}, we need to estimate
\begin{align*}
\delta_{st}v=(e^{(t-s)L_\mu^c}-1)v(s)+\int_s^te^{(t-r)L_\mu^c}F(v,w)(r)dr.
\end{align*}
For the first term, we have
\begin{align*}
\|(e^{(t-s)L_\mu^c}-1)v(s)\|_{L^p}\lesssim\|(e^{(t-s)L_\mu^c}-1)v(s)\|_{\B_p^{\kappa'-\kappa}}\lesssim(t-s)^{\frac{1+2\kappa}{4}}\|v(s)\|_{\B_p^{\frac12+\kappa'}}.
\end{align*}
For the second term, we have
\begin{align*}
\left\|\int_s^te^{(t-r)L_\mu^c}F(v,w)(r)dr\right\|_{\B_p^{\kappa'-\kappa}}
&\lesssim\int_s^t(t-r)^{-\frac{1+\kappa'}2}\|F(v,w)(r)\|_{\B_p^{-1-\kappa}}dr\\
&\lesssim\int_s^t(t-r)^{-\frac{1+\kappa'}2}(1+\|v(r)\|_{\B_p^{\frac12+\kappa'}}+\|w(r)\|_{L^p})dr.
\end{align*}
We can bound the part involving $\|v(r)\|_{\B_p^{\frac12+\kappa'}}$ by
\begin{align*}
&\int_s^t(t-r)^{-\frac{1+\kappa'}2}\|v(r)\|_{\B_p^{\frac12+\kappa'}}dr\\
&\lesssim\int_s^t(t-r)^{-\frac{1+\kappa'}2}\|v(s)\|_{\B_p^{\frac12+\kappa'}}dr+\int_s^t(t-r)^{-\frac{1+\kappa'}2}\int_0^r(r-\tau)^{-\frac34-\kappa'}(1+\|w(\tau)\|_{L^p})d\tau\\
&\lesssim(t-s)^{\frac{1-\kappa'}2}\|v(s)\|_{\B_p^{\frac12+\kappa'}}+\int_s^t\left(\int_\tau^t(t-r)^{-\frac{1+\kappa'}2}(r-\tau)^{-\frac34-\kappa'}dr\right)(1+\|w(\tau)\|_{L^p})d\tau\\
&\lesssim(t-s)^{\frac{1-\kappa'}2}\|v(s)\|_{\B_p^{\frac12+\kappa'}}+\int_s^t(t-\tau)^{-\frac{1+6\kappa'}{4}}(1+\|w(\tau)\|_{L^p})d\tau.
\end{align*}
In this \paper, we repeatedly use the exchange of the order of integration like above.
\end{proof}

As an application, we can control $\com(v,w)$ by $w$.

\begin{cor}
Let $p\in[1,\infty)$ and $c>0$. Then for every $t\in[0,T]$,
\begin{align}\label{v:estimate of com in Besov}
\|\com(v,w)(t)\|_{\B_p^{1+\kappa'}}&\lesssim1+t^{-\frac{1}{4}}\|v_0\|_{\B_p^{\frac{1}{2}+\kappa'}}+t^{-\kappa'}(1+\|w(t)\|_{L^p})\\\notag
&\quad+t^{-\kappa'}\int_0^t(t-s)^{-\frac{1+\kappa'}{2}}(1+\|w(s)\|_{L^p})ds\\\notag
&\quad+\int_0^t(t-s)^{-\frac{1+3\kappa'}{2}}\|w(s)\|_{\B_p^{1+2\kappa'}}ds\\\notag
&\quad+\int_0^t(t-s)^{-1-\frac{\kappa+\kappa'}{2}}\|\delta_{st}w\|_{L^p}ds,
\end{align}
where the implicit constant depends on $c$.
\end{cor}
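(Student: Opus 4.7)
The plan is to take Lemma \ref{local:estimate of com} with the choice $\alpha=\frac{1}{2}+\kappa'$ (so that $t^{-(1+\kappa'-\alpha)/2}\|v_0\|_{\B_p^\alpha}$ becomes exactly $t^{-1/4}\|v_0\|_{\B_p^{1/2+\kappa'}}$), and then replace each occurrence of $v$ on the right-hand side by the bounds \eqref{v:estimate of v in L^p}, \eqref{v:estimate of v in Besov}, \eqref{v:estimate of dv in L^p} that were just established. Since the constant in the corollary is allowed to depend on $c$ (and on $T$), we may use $e^{-c(t-s)}\le 1$ freely throughout.

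The easy term is $t^{-\kappa'}(1+\|v(t)\|_{L^p}+\|w(t)\|_{L^p})$: inserting \eqref{v:estimate of v in L^p} gives $t^{-\kappa'}e^{-ct}\|v_0\|_{L^p}\le T^{1/4-\kappa'}\,t^{-1/4}\|v_0\|_{\B_p^{1/2+\kappa'}}$ plus exactly the third line of the claimed bound. For the integral term $\int_0^t (t-s)^{-(3+2\kappa)/4}\|v(s)\|_{\B_p^{1/2+\kappa'}}\,ds$, substituting \eqref{v:estimate of v in Besov}, exchanging the order of integration, and evaluating the inner integral by the Beta identity
$$\int_\tau^t (t-s)^{a-1}(s-\tau)^{b-1}\,ds=B(a,b)(t-\tau)^{a+b-1}$$
with $a=\tfrac14-\tfrac\kappa2$, $b=\tfrac14-\kappa'$ produces a term of the shape $\int_0^t(t-\tau)^{-1/2-\kappa/2-\kappa'}(1+\|w(\tau)\|_{L^p})\,d\tau$. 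Using $\kappa<\kappa'$ and $(t-\tau)\le t\le T$ to trade the extra $(t-\tau)^{-\kappa/2-\kappa'/2}$ factor against $t^{-\kappa'}$ (up to a $T$-dependent constant), this fits inside the $t^{-\kappa'}\int_0^t(t-s)^{-(1+\kappa')/2}\cdots$ term of the corollary, and the constant part $\|v_0\|_{\B_p^{1/2+\kappa'}}$ produces $t^{(1-2\kappa)/4}\|v_0\|_{\B_p^{1/2+\kappa'}}\lesssim t^{-1/4}\|v_0\|_{\B_p^{1/2+\kappa'}}$.

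For the delicate final term $\int_0^t (t-s)^{-1-(\kappa+\kappa')/2}\|\delta_{st}v\|_{L^p}\,ds$, I use \eqref{v:estimate of dv in L^p}. The first piece of that bound contributes $\int_0^t (t-s)^{-3/4-\kappa'/2}\|v(s)\|_{\B_p^{1/2+\kappa'}}\,ds$ (after combining exponents $-1-(\kappa+\kappa')/2+(1+2\kappa)/4=-3/4-\kappa'/2$), which is handled by the same Fubini-plus-Beta strategy as in the previous paragraph. The second piece gives a double integral $\int_0^t(t-s)^{-1-(\kappa+\kappa')/2}\int_s^t(t-r)^{-(1+\kappa')/2}(1+\|w(r)\|_{L^p})\,dr\,ds$; swapping $s$ and $r$ integration, the inner integral over $s\in[0,r]$ is finite because the exponent $-1-(\kappa+\kappa')/2>-1$ after combining with the extra $(t-s)^{(1+2\kappa)/4}$ from \eqref{v:estimate of dv in L^p}, leaving a single integral in $r$ that is again of the required form.

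The main obstacle is the bookkeeping: several exponents arising from the Beta integrals are not literally of the form $(1+\kappa')/2$ or $(1+3\kappa')/2$ appearing in the statement, so one must repeatedly absorb harmless factors of $(t-\tau)^{\pm\varepsilon}$ into $t^{\pm\varepsilon}$ via $(t-\tau)\le t\le T$, relying on the ordering $\kappa<\kappa'$ and on the freedom to let the implicit constant depend on $T$. Once these exponent manipulations are carried out uniformly, all the surviving contributions line up with one of the five structural terms on the right-hand side of the corollary, completing the proof.
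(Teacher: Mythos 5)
Your plan is the right one and it is in substance the paper's own proof: apply Lemma \ref{local:estimate of com} with $\alpha=\tfrac12+\kappa'$, then substitute \eqref{v:estimate of v in L^p}, \eqref{v:estimate of v in Besov}, \eqref{v:estimate of dv in L^p}, swap orders of integration, and evaluate the resulting Beta integrals. However, two of the specific absorption steps you describe are incorrect as stated.

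First, after the Fubini--Beta step on $\int_0^t(t-s)^{-\frac{3+2\kappa}{4}}\|v(s)\|_{\B_p^{\frac12+\kappa'}}ds$ you correctly arrive (up to $\|v_0\|_{\B_p^{\frac12+\kappa'}}$) at $\int_0^t(t-\tau)^{-\frac12-\frac\kappa2-\kappa'}(1+\|w(\tau)\|_{L^p})\,d\tau$, but you then claim to trade the factor $(t-\tau)^{-\frac\kappa2-\frac{\kappa'}{2}}$ against $t^{-\kappa'}$ so that the result lands in the fourth line. This trade goes in the wrong direction: as $\tau\uparrow t$ the left side blows up while $t^{-\kappa'}$ stays bounded, so no $T$-dependent constant can make $(t-\tau)^{-\frac\kappa2-\frac{\kappa'}{2}}\lesssim t^{-\kappa'}$. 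The correct absorption is into the \emph{fifth} line: since $\kappa<\kappa'$ we have $\tfrac12+\tfrac\kappa2+\kappa'<\tfrac{1+3\kappa'}2$, hence $(t-\tau)^{-\frac12-\frac\kappa2-\kappa'}\lesssim_T(t-\tau)^{-\frac{1+3\kappa'}{2}}$, and $\|w\|_{L^p}\lesssim\|w\|_{\B_p^{1+2\kappa'}}$ by Proposition \ref{para:estimates of Besov norm}. (Equivalently, as the paper does, loosen $(t-s)^{-\frac{3+2\kappa}{4}}$ to $(t-s)^{-\frac{3+2\kappa'}{4}}$ \emph{before} the Beta identity, which produces $(t-\tau)^{-\frac{1+3\kappa'}{2}}$ directly.)

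Second, for the double integral coming from the second piece of \eqref{v:estimate of dv in L^p}, your justification of the inner $s$-integral is garbled: there is no $(t-s)^{(1+2\kappa)/4}$ factor available there (that factor multiplies the \emph{first} piece, not the second), and the exponent $-1-\frac{\kappa+\kappa'}{2}$ is of course \emph{less} than $-1$, not greater. The inner integral $\int_0^r(t-s)^{-1-\frac{\kappa+\kappa'}{2}}\,ds$ is nonetheless finite for $r<t$ because the singularity at $s=t$ lies outside $[0,r]$, and it is $\lesssim(t-r)^{-\frac{\kappa+\kappa'}{2}}\lesssim(t-r)^{-\kappa'}$; combined with $(t-r)^{-\frac{1+\kappa'}{2}}$ this yields $(t-r)^{-\frac{1+3\kappa'}{2}}$, which again lands in the fifth line. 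So the architecture of your proof is sound and matches the paper, but these two bookkeeping steps need to be repaired along the lines above.
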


\begin{proof}
We use the estimate in Lemma \ref{local:estimate of com}, setting $\alpha=\frac{1}{2}+\kappa'$. We need to control the terms
\[t^{-\kappa'}\|v(t)\|_{L^p},\quad\int_0^t(t-s)^{-\frac{3+2\kappa}{4}}\|v(s)\|_{\B_p^{\frac{1}{2}+\kappa'}}ds,\quad\int_0^t(t-s)^{-1-\frac{\kappa+\kappa'}{2}}\|\delta_{st}v\|_{L^p}ds\]
by $w$. For the first term, we use (\ref{v:estimate of v in L^p}) and have
\begin{align*}
t^{-\kappa'}\|v(t)\|_{L^p}\lesssim
t^{-\kappa'}\|v_0\|_{L^p}+t^{-\kappa'}\int_0^t(t-s)^{-\frac{1+\kappa'}{2}}(1+\|w(s)\|_{L^p})ds.
\end{align*}
For the second term, from (\ref{v:estimate of v in Besov})
\begin{align}\label{v:second term}
&\int_0^t(t-s)^{-\frac{3+2\kappa}{4}}\|v(s)\|_{\B_p^{\frac{1}{2}+\kappa'}}ds\\\notag
&\lesssim\|v_0\|_{\B_p^{\frac{1}{2}+\kappa'}}+\int_0^t(t-s)^{-\frac{3+2\kappa}{4}}\int_0^s(s-r)^{-\frac{3}{4}-\kappa'}(1+\|w(r)\|_{L^p})dr\,ds\\\notag
&=\|v_0\|_{\B_p^{\frac{1}{2}+\kappa'}}+\int_0^t\left(\int_r^t(t-s)^{-\frac{3+2\kappa'}{4}}(s-r)^{-\frac{3+4\kappa'}{4}}ds\right)(1+\|w(r)\|_{L^p})dr\\\notag
&\lesssim\|v_0\|_{\B_p^{\frac{1}{2}+\kappa'}}+\int_0^t(t-r)^{-\frac{1+3\kappa'}{2}}(1+\|w(r)\|_{L^p})dr.
\end{align}
For the third term, from (\ref{v:estimate of dv in L^p})
\begin{align*}
\int_0^t(t-s)^{-1-\frac{\kappa+\kappa'}{2}}\|\delta_{st}v\|_{L^p}ds
&\lesssim\int_0^t(t-s)^{-\frac{3+2\kappa'}{4}}\|v(s)\|_{\B_p^{\frac{1}{2}+\kappa'}}ds\\
&\quad+\int_0^t(t-s)^{-1-\kappa'}\int_s^t(t-r)^{-\frac{1+\kappa'}{2}}(1+\|w(r)\|_{L^p})dr\,ds
\end{align*}
Here the first integral is bounded by (\ref{v:second term}) again. The second integral is computed by
\begin{align*}
&\int_0^t\int_0^r(t-s)^{-1-\kappa'}ds\,(t-r)^{-\frac{1+\kappa'}{2}}(1+\|w(r)\|_{L^p})dr\\
&\lesssim\int_0^t(t-r)^{-\frac{1+3\kappa'}{2}}(1+\|w(r)\|_{L^p})dr.
\end{align*}
These complete the proof.
\end{proof}

\section{A priori estimate of $w$}\label{section:apriori_w}

The goal of this section is to show the following theorem.

\begin{theo}\label{w:control L^p by Besov}
Let $p\in(1,5\wedge\{1+\mu(\mu+\sqrt{1+\mu^2})\})$ and assume $\frac{5}{2}\kappa'\le\frac{3}{4}-\frac{3}{2p+2}$. For sufficiently large $c$ depending on $\mu,\nu,\lambda,\kappa,\kappa',p,T$ and $\$\mathbb{X}\$_{\kappa,T}$, we have
\[\|w(t)\|_{L^{2p}}^{2p}+\int_0^t\|w(s)\|_{L^{2p+2}}^{2p+2}ds
\lesssim1+\|v_0\|_{\B_{2p+2}^{\frac{1}{2}+\kappa'}}^{2p+2}+\|w_0\|_{L^{2p}}^{2p}+\int_0^t\|w(s)\|_{\B_{\frac{2p+2}{3}}^{1+2\kappa'}}^{\frac{2p+2}{3}}ds,\]
where the implicit constant depends only on $\mu,\nu,\lambda,c,\kappa,\kappa',p,T$ and $\$\mathbb{X}\$_{\kappa,T}$.
\end{theo}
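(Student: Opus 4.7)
My plan is to follow the classical $L^{2p}$ energy argument of \cite[Section~4]{DGL} for the deterministic complex Ginzburg--Landau equation, applied to the regular remainder $w$, and to treat every additional term coming from the paracontrolled decomposition as a perturbation. Thanks to the reduction announced at the end of Section~\ref{section:solution} I may work with $(v_0,w_0)\in\B_\infty^{1-\kappa'}\times\B_\infty^{3/2-2\kappa'}$, so $w$ is regular enough that the first step is to rewrite the $w$-equation in strong form
$$\partial_t w=(i+\mu)\Delta w-w+G(v,w)+cv,$$
test against $|w|^{2p-2}\bar w$, take the real part, and integrate over $\T^3$. The pointwise inequality proved in \cite[Section~4]{DGL} states that for every $1<p<1+\mu(\mu+\sqrt{1+\mu^2})$ there exist $\gamma,\delta>0$ with
$$\mathrm{Re}\bigl((i+\mu)|w|^{2p-2}\bar w\,\Delta w\bigr)-\mathrm{Re}(\nu)|w|^{2p+2}\le-\gamma\bigl|\nabla(|w|^p)\bigr|^2-\delta|w|^{2p+2},$$
and combining this with the dominant cubic contribution from $G_{(1)}$ supplies, after integration in space and time, the coercive term $\delta\int_0^t\|w(s)\|_{L^{2p+2}}^{2p+2}\,ds$ on the left of the target estimate.

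The remaining contributions of $G(v,w)$ split naturally into two groups. The first group, namely the $v$-cross terms in $G_{(1)}$ together with $G_{(2)}$, $G_{(3)}$ and the bounded constant $G_{(4)}$, consists of polynomial expressions in $v,w$ multiplied by drivers of controlled size; H\"older and Young inequalities bound each of them by a small multiple of $\|w\|_{L^{2p+2}}^{2p+2}$ (absorbed into the coercive term by making the Young constant small) plus a constant times $1+\|v(s)\|_{L^{2p+2}}^{2p+2}$. The $v$-norm is then bounded via \eqref{v:estimate of v in Besov} by $\|v_0\|_{\B_{2p+2}^{1/2+\kappa'}}^{2p+2}$ plus an integral involving $\|w\|_{L^{2p}}^{2p}$ that is closed by Gronwall's inequality; the linear pair $cv-w$ is treated similarly, with $c$ absorbed into the implicit constant.

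The heart of the proof, and the principal obstacle, is the second group $G_{(5)}$--$G_{(8)}$ of resonants, commutators and paraproducts paired against drivers of negative regularity, where the only available smoothness of $w$ is a Besov norm. For the representative term in $G_{(7)}$ I will combine the Besov duality pairing from Section~\ref{section:para} with the Bony resonant estimate of Proposition~\ref{para:paraproduct and resonant}(3), using the H\"older triple $\bigl(\tfrac{2p+2}{3},\infty,\tfrac{2p+2}{2p-1}\bigr)$ dual to $|w|^{2p-2}\bar w\in L^{(2p+2)/(2p-1)}$, to obtain
$$\bigl|\langle w\rs\CD,\,|w|^{2p-2}\bar w\rangle\bigr|\lesssim\|w\|_{\B_{(2p+2)/3}^{1+2\kappa'}}\,\|\CD\|_{\B_\infty^{-1-\kappa}}\,\|w\|_{L^{2p+2}}^{2p-1}.$$
Young's inequality with dual exponents $\bigl(\tfrac{2p+2}{3},\tfrac{2p+2}{2p-1}\bigr)$ then produces exactly the term $\|w\|_{\B_{(2p+2)/3}^{1+2\kappa'}}^{(2p+2)/3}$ appearing on the right of the theorem, plus a small multiple of $\|w\|_{L^{2p+2}}^{2p+2}$ that is again absorbed. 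The hypothesis $\tfrac{5}{2}\kappa'\le\tfrac{3}{4}-\tfrac{3}{2p+2}$ is precisely what makes the Besov regularity budget close in every such pairing, while the restriction $p<5$ keeps the relevant Young exponent strictly below $2p+2$. Analogous arguments treat $G_{(5)}$, $G_{(6)}$ (using the commutator bound \eqref{v:estimate of com in Besov} for the $\com$-term) and $G_{(8)}$ (a paraproduct estimate in place of the resonant one). The hard part will be the simultaneous bookkeeping of all these pairings: one must verify that in each of them \emph{only} the single large norm $\|w\|_{\B_{(2p+2)/3}^{1+2\kappa'}}$ survives on the right and that every other factor of $\|w\|_{L^{2p+2}}$ carries a strictly subcritical exponent. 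Once this is achieved, a time integration and one application of Gronwall's inequality close the estimate.
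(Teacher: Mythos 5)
Your overall strategy follows the paper closely: test the $w$-equation against $|w|^{2p-2}\bar{w}$, invoke the Doering--Gibbon--Levermore pointwise inequality to produce coercive $\|\nabla(|w|^p)\|^2$ and $\|w\|_{L^{2p+2}}^{2p+2}$ terms on the left, and then bound the remaining pairings by Besov duality, resonant estimates, and Young's inequality so that $\|w\|_{\B_{(2p+2)/3}^{1+2\kappa'}}^{(2p+2)/3}$ survives on the right. That is indeed the skeleton of Proposition~\ref{w:L^2p control of w from PDE} together with Lemmas~\ref{w:estimateI1}--\ref{w:estimateI7}, and your identification of where $p<5$ and $\tfrac52\kappa'\le\tfrac34-\tfrac3{2p+2}$ enter is roughly right.

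There is, however, a genuine gap hidden in the sentence that treats $G_{(6)}$ as ``analogous''. The commutator bound~\eqref{v:estimate of com in Besov} that you cite contains the term
\[
\int_0^t (t-s)^{-1-\frac{\kappa+\kappa'}{2}}\,\|\delta_{st}w\|_{L^p}\,ds,
\]
whose kernel is \emph{not} integrable in $s$. Closing this term is therefore not a matter of another Besov--H\"older pairing: you must produce a H\"older-in-time estimate of the form $\|\delta_{st}w\|_{L^{(2p+2)/3}}\lesssim (t-s)^{2\kappa'}(\cdots)^{3/(2p+2)}$ with a right-hand side controlled by the very quantities $A_t$, $C_t$ you are trying to bound. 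This is precisely Theorem~\ref{dw:goal}, whose proof occupies all of Section~\ref{section:apriori_dw} and is itself nontrivial (it requires decomposing $\delta'_{st}w$ into the eight pieces $\mathcal{W}_{(i)}$, estimating each, and running a self-improving argument on the H\"older seminorm $\$w\$_{q,\kappa';t}$). Without this step the bound on $\mathcal{I}_{(6)}$ does not close, and the whole theorem fails. You should flag this as a separate, substantial ingredient rather than folding it into ``analogous arguments''.

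Two smaller remarks. First, several of the perturbative terms ($G_{(2)}, G_{(3)}, G_{(4)}, G_{(8)}$) produce, after the duality pairing and Proposition~\ref{para:sobolev into besov}, a factor of $\||\nabla w|^2|w|^{2p-2}\|_{L^1}^{1/2}$, not just $\|w\|_{L^{2p+2}}$-powers; these must be absorbed into the gradient term $\delta\mu\int_0^t\||\nabla w|^2|w|^{2p-2}\|_{L^1}ds$ on the left, so the gradient coercivity is not merely a byproduct of the DGL computation but is actively used. Second, the paper does not invoke Gronwall for the $v$-contributions; instead it exploits the damping constant $c$ appearing in $\L_\mu v=F(v,w)-cv$ --- the estimate~\eqref{w:G_1:estimating v in L^p} shows the coefficient of $\int_0^t a_s\,ds$ is $K(c)=\int_0^\infty e^{-cs}s^{-(1+\kappa')/2}ds$, which tends to zero as $c\to\infty$, allowing direct absorption. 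Since $a_s$ already contains $\|w\|_{L^{2p+2}}^{2p+2}$, a Gronwall argument on that quantity would be circular unless you are careful to distinguish $\|w\|_{L^{2p}}$ from $\|w\|_{L^{2p+2}}$; the ``large $c$'' mechanism is the cleaner route and is the reason $c$ appears as a parameter in the theorem's hypotheses rather than merely in the implicit constant.
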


We start from the following $L^{2p}$ inequality. See also \cite[Section~4]{DGL}.

\begin{prop}\label{w:L^2p control of w from PDE}
Let $1<p<1+\mu(\mu+\sqrt{1+\mu^2})$. For every $\delta>0$ such that
\begin{align}\label{w:assumption of p and mu}
\frac{p-1}{\mu(\mu+\sqrt{1+\mu^2})}<1-\delta,
\end{align}
we have the following inequality.
\begin{multline}\label{w:L^p inequality}
\frac{1}{2p}(\|w(t)\|_{L^{2p}}^{2p}-\|w_0\|_{L^{2p}}^{2p})+\delta\mu\int_0^t\||\nabla w|^2|w|^{2p-2}(s)\|_{L^1}ds\\
{}+\Re\nu\int_0^t\|w(s)\|_{L^{2p+2}}^{2p+2}\le\int_0^t\langle|w|^{2p-2},\Re(\conj{w}G_c')\rangle(s)ds.
\end{multline}
Here $G_c'(v,w)=G(v,w)+cv+\nu w^2\bar{w}$.
\end{prop}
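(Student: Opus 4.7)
The strategy is to test the equation for $w$ against $\bar w |w|^{2p-2}$ and extract a favorable sign from the Laplacian term, exploiting the dissipation in both the real and imaginary directions. Using the decomposition $G(v,w) + cv = G_c'(v,w) - \nu w^2 \conj{w}$, the mild equation becomes
\begin{equation*}
\partial_t w = (i+\mu)\Delta w - w - \nu w^2 \conj{w} + G_c'(v,w),
\end{equation*}
so that pairing with $\bar w |w|^{2p-2}$, taking real parts and integrating gives
\begin{equation*}
\tfrac{1}{2p}\tfrac{d}{dt}\|w\|_{L^{2p}}^{2p} = \Re\int (i+\mu)\bar w\Delta w\,|w|^{2p-2}dx - \|w\|_{L^{2p}}^{2p} - \Re\nu\,\|w\|_{L^{2p+2}}^{2p+2} + \langle|w|^{2p-2},\Re(\bar w G_c')\rangle.
\end{equation*}
The nonlinear and linear terms fall out for free; the whole content of the proposition is to show that the Laplacian term is $\le -\delta\mu\int|\nabla w|^2|w|^{2p-2}dx$.

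The computation proceeds by integration by parts. Writing $A=\Re(\bar w\nabla w)=\tfrac{1}{2}\nabla|w|^2$ and $B=\Im(\bar w\nabla w)$, so that $|A|^2+|B|^2=|w|^2|\nabla w|^2$ pointwise, one uses $\nabla|w|^{2p-2}=2(p-1)|w|^{2p-4}A$ to obtain
\begin{equation*}
\Re\int(i+\mu)\bar w\Delta w\,|w|^{2p-2}dx = -\mu\int|w|^{2p-2}|\nabla w|^2dx - 2(p-1)\int|w|^{2p-4}\bigl(\mu|A|^2 - A\cdot B\bigr)dx.
\end{equation*}
The first term is the pure dissipation; the second term is the price paid for the presence of the Schr\"odinger part $i\Delta$ and has an unfavorable sign when $A\cdot B>0$.

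The main step is the pointwise optimization: bounding $|A\cdot B|\le|A||B|$ and maximizing
\begin{equation*}
-\mu c - 2(p-1)\mu|A|^2 + 2(p-1)|A||B| \quad\text{subject to}\quad |A|^2+|B|^2=c:=|w|^2|\nabla w|^2.
\end{equation*}
Parametrizing $|A|^2/|B|^2=\tau^2$ and solving $\tau^{-1}-\tau=2\mu$ yields $\tau=\sqrt{1+\mu^2}-\mu$ at the optimum; a short computation then gives the optimal value $\bigl((p-1)\tau-\mu\bigr)c$. Using $\tau=1/(\sqrt{1+\mu^2}+\mu)$, the condition $(p-1)\tau\le(1-\delta)\mu$ is exactly the hypothesis $(p-1)/[\mu(\mu+\sqrt{1+\mu^2})]<1-\delta$. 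Combining everything gives
\begin{equation*}
\tfrac{1}{2p}\tfrac{d}{dt}\|w\|_{L^{2p}}^{2p} + \delta\mu\int|w|^{2p-2}|\nabla w|^2dx + \Re\nu\,\|w\|_{L^{2p+2}}^{2p+2} \le \langle|w|^{2p-2},\Re(\bar w G_c')\rangle,
\end{equation*}
after dropping the nonnegative $\|w\|_{L^{2p}}^{2p}$ on the left, and integrating over $[0,t]$ produces the claim.

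The main obstacle is the low regularity of $w$: as a solution in the paracontrolled sense, $w$ lies only in $\B_\infty^{3/2-2\kappa'}$, so neither $\Delta w$ nor $|w|^{2p-2}$ is classically smooth (the latter fails to be $C^1$ at zeros of $w$ when $p<2$). This is handled by standard regularization: replace $|w|^{2p-2}$ by $(|w|^2+\varepsilon)^{p-1}$, use the mild formulation for the integration-by-parts identity, and pass to the limit $\varepsilon\downarrow 0$ using dominated convergence. The algebraic optimization above survives the regularization verbatim, since it is carried out pointwise.
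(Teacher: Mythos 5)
Your pointwise optimization is correct and is a genuinely different route to the algebraic heart of the proof than the paper's. You work with $A=\Re(\conj{w}\nabla w)=\tfrac12\nabla|w|^2$ and $B=\Im(\conj{w}\nabla w)$, bound $|A\cdot B|\le|A||B|$, and optimize over the ratio $\tau=|A|/|B|$ subject to $|A|^2+|B|^2=|w|^2|\nabla w|^2$; the stationarity equation $\tau^2+2\mu\tau-1=0$ gives $\tau=\sqrt{1+\mu^2}-\mu=1/(\mu+\sqrt{1+\mu^2})$, and the resulting threshold $(p-1)\tau\le(1-\delta)\mu$ is precisely \eqref{w:assumption of p and mu}. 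The paper instead rewrites the same pointwise quantity as a quadratic form in the two real vector fields $\nabla|w|^2=2A$ and $i(w\nabla\conj{w}-\conj{w}\nabla w)=2B$, obtaining the matrix
\[
\begin{pmatrix}\bigl(p-\tfrac12-\tfrac\delta2\bigr)\mu&-\tfrac12(p-1)\\[2pt]-\tfrac12(p-1)&\bigl(\tfrac12-\tfrac\delta2\bigr)\mu\end{pmatrix},
\]
and uses the trace/determinant criterion for nonnegative definiteness. The two formulations are a linear change of variables apart and land on the identical condition; your constrained-maximization version is more elementary to verify, while the paper's matrix form makes visible that the worst case pairs the direction $\nabla|w|^2$ against the probability-current-like vector $i(w\nabla\conj{w}-\conj{w}\nabla w)$.

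Where your sketch is thinner is the justification of the energy identity, which is the bulk of the paper's proof. Saying ``use the mild formulation for the integration-by-parts identity'' hides the real work: since $\partial_t w$ is not a function, the paper first proves that the mild solution is a weak solution against test functions in $\B_\infty^1$, checks that $\varphi_s=\conj{w}|w|^{2p-2}(s)\in\B_\infty^1$ by computing $\nabla\{w(w\conj{w})^{p-1}\}\in C_TL^\infty$, defines $\int_0^t\langle\conj{w}|w|^{2p-2},\partial_sw\rangle$ as a Young integral (using $w\in C_T^\delta L^\infty$ for $\delta<\tfrac34-\kappa'$ and the local Lipschitz continuity of $z\mapsto z|z|^{2p-2}$, valid because $2p-1>1$), and then passes Riemann sums to the limit. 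Your proposed $\varepsilon$-regularization of the weight $|w|^{2p-2}$ is a workable alternative, but note it is not actually forced here: the test function is $\conj{w}|w|^{2p-2}$, not $|w|^{2p-2}$ alone, and $z\mapsto\conj{z}|z|^{2p-2}$ is already $C^1$ on $\C$ (with derivative $O(|z|^{2p-2})$) for every $p>1$, so the paper's direct argument handles the full range $p>1$ without regularizing. The regularization does not by itself resolve the deeper issue---giving meaning to the time-derivative pairing---for which some version of the Young-integral or Riemann-sum argument is still required.
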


\begin{proof}
We compute the derivative of $\|w(t)\|_{L^{2p}}^{2p}$ at formal level. For every $p>1$,
\begin{align}\label{w:derivative of L^2p}
&\frac{d}{dt}\|w(t)\|_{L^{2p}}^{2p}
=\frac{d}{dt}\int_{\T^3}(w\conj{w})^{p}dx
=p\int_{\T^3}(w\conj{w})^{p-1}(w\partial_t\conj{w}+\conj{w}\partial_tw)dx\\\notag
&=p\int_{\T^3}(w\conj{w})^{p-1}\{(-i+\mu)w\Delta\conj{w}+(i+\mu)\conj{w}\Delta w\}dx+p\int_{\T^3}(w\conj{w})^{p-1}(w\conj{G_c}+\conj{w}G_c)dx\\\notag
&=-p\left[(-i+\mu)\int_{\T^3}\nabla\{(w\conj{w})^{p-1}w\}\cdot\nabla\conj{w}dx+(i+\mu)\int_{\T^3}\nabla\{(w\conj{w})^{p-1}\conj{w}\}\cdot\nabla wdx\right]\\\notag
&\quad-2p\Re\nu\|w(t)\|_{L^{2p+2}}^{2p+2}+p\int_{\T^3}|w|^{2p-2}(w\conj{G_c'}+\conj{w}G_c')dx,
\end{align}
where $G_c(v,w)=G(v,w)+cv$.

We can justify the above computations as follows. First, since $w(t)$ is not differentiable in $t$, we should interpret \eqref{w:derivative of L^2p} as the integration equality
\[\|w(t)\|_{L^{2p}}^{2p}-\|w(0)\|_{L^{2p}}^{2p}=\int_0^t\cdots ds.\]
Then $\partial_tw$ and $\partial_t\conj{w}$ are defined by Young integrals:
\begin{align}\label{w:young integral}
p\int_0^t\langle(w\conj{w})^{p-1}w,\partial_s\conj{w}\rangle+p\int_0^t\langle(w\conj{w})^{p-1}\conj{w},\partial_sw\rangle.
\end{align}
We can see that $w\in C_T^\delta L^\infty$ for $\delta<\frac{3}{4}-\kappa'$ by the definition of the solution space $\mathcal{D}_T^{\kappa,\kappa'}$. (Since $w_0\in\B_\infty^{\frac32-2\kappa'}$ now, $w$ belongs to $\L_T^{\frac32-2\kappa',1-\kappa'}$ rather than $\L_T^{1-\kappa'+\kappa,\frac32-2\kappa',1-\kappa'}$.) Since the function $w\mapsto|w|^{2p-2}w$ is locally Lipschitz continuous because $(2p-2)+1>1$, the above Young integrals are well-defined. The last equality in \eqref{w:derivative of L^2p} is justified by classical PDE theory. By a similar argument to \cite[Proposition~6.7]{MW15}, the mild solution $w$ is also a weak solution, in the sense that for every $\varphi\in \B_\infty^1$,
\begin{align}\label{w:mild sol is weak sol}
\langle w(t),\varphi\rangle-\langle w_0,\varphi\rangle=-(i+\mu)\int_0^t\langle\nabla w(s),\nabla\varphi\rangle ds+\int_0^t\langle G_c(s),\varphi\rangle ds.
\end{align}
Let $\varphi_s=(\conj{w}|w|^{2p-2})(s)$ for $s\in[0,t]$. Since $\nabla w\in C_TL^\infty$ and
\begin{align*}
\nabla\{w(w\conj{w})^{p-1}\}=p(w\conj{w})^{p-1}\nabla w+(p-1)(w\conj{w})^{p-2}w^2\nabla\conj{w}\in C_TL^\infty,
\end{align*}
we have $\varphi_s\in\B_\infty^1$ by Proposition \ref{para:sobolev into besov}. Hence it is allowed to insert $\varphi=\varphi_s$ into \eqref{w:mild sol is weak sol}. We take a partition $\{0=t_0<\dots<t_N=t\}$ of $[0,t]$ and consider the sum
\begin{align*}
&\sum_{i=0}^{N-1}\langle w_{t_{i+1}}-w_{t_i},\varphi_{t_i}\rangle\\
&=-(i+\mu)\sum_{i=0}^{N-1}\int_{t_i}^{t_{i+1}}\langle\nabla w(s),\nabla\varphi_{t_i}\rangle ds+\sum_{i=0}^{N-1}\int_{t_i}^{t_{i+1}}\langle G_c(s),\varphi_{t_i}\rangle ds.
\end{align*}
As $\sup_{i}|t_{i+1}-t_i|\to0$, the left hand side becomes Young integral as \eqref{w:young integral}. The right hand side also converges to Riemann integrals
\begin{align*}
-(i+\mu)\int_0^t\langle\nabla w,\nabla\{\conj{w}(w\conj{w})^{p-1}\}\rangle(s) ds+\int_0^t\langle G_c,\conj{w}(w\conj{w})^{p-1}\rangle(s)ds.
\end{align*}

Now we return to the first term of the last part of \eqref{w:derivative of L^2p}. Since
\begin{align*}
\langle\nabla\{(w\conj{w})^{p-1}w\},\nabla\conj{w}\rangle
&=\langle(w\conj{w})^{p-1},|\nabla\conj{w}|^2\rangle+(p-1)\langle(w\conj{w})^{p-2},w\nabla\conj{w}\cdot\nabla(w\conj{w})\rangle\\
&=p\langle|w|^{2p-2},|\nabla w|^2\rangle+(p-1)\langle|w|^{2p-4},w^2(\nabla\conj{w})^2\rangle
\end{align*}
we have
\begin{align*}
&(-i+\mu)\langle\nabla\{(w\conj{w})^{p-1}w\},\nabla\conj{w}\rangle+(i+\mu)\langle\nabla\{(w\conj{w})^{p-1}\conj{w}\},\nabla w\rangle\\
&=2p\mu\langle|w|^{2p-2},|\nabla w|^2\rangle+(p-1)\langle|w|^{2p-4},(-i+\mu)w^2(\nabla\conj{w})^2+(i+\mu)\conj{w}^2(\nabla w)^2\rangle\\
&=2p\mu\langle |w|^{2p-2},|\nabla w|^2\rangle+(p-1)\mu\langle|w|^{2p-4},(w\nabla\conj{w}-\conj{w}\nabla w)^2+2|w|^2|\nabla w|^2\rangle\\
&\quad-(p-1)i\langle|w|^{2p-4},(w\nabla\conj{w}-\conj{w}\nabla w)\cdot(w\nabla\conj{w}+\conj{w}\nabla w)\rangle\\
&=2(2p-1)\mu\langle |w|^{2p-2},|\nabla w|^2\rangle-(p-1)\mu\langle|w|^{2p-4},|w\nabla\conj{w}-\conj{w}\nabla w|^2\rangle\\
&\quad-(p-1)\langle|w|^{2p-4},i(w\nabla\conj{w}-\conj{w}\nabla w)\cdot\nabla|w|^2\rangle.
\end{align*}
Let $\delta\in(0,1]$ and move the term $-2p\delta\mu\langle |w|^{2p-2},|\nabla w|^2\rangle$ into the left hand side. Then the quantity
\begin{align*}
&-p[2(2p-1-\delta)\mu\langle |w|^{2p-2},|\nabla w|^2\rangle-(p-1)\mu\langle|w|^{2p-4},|w\nabla\conj{w}-\conj{w}\nabla w|^2\rangle\\
&\quad-(p-1)\langle|w|^{2p-4},i(w\nabla\conj{w}-\conj{w}\nabla w)\cdot\nabla|w|^2\rangle]
\end{align*}
remains. By using the identity $4|w|^2|\nabla w|^2=(\nabla|w|^2)^2+|w\nabla\conj{w}-\conj{w}\nabla w|^2$, the above value turns into $-p\langle|w|^{2p-4},f\rangle$, where
\begin{align*}
f&=\left(p-\frac{1}{2}-\frac{\delta}{2}\right)\mu(\nabla|w|^2)^2-(p-1)\nabla|w|^2\cdot i(w\nabla\conj{w}-\conj{w}\nabla w)\\
&\quad+\left(\frac{1}{2}-\frac{\delta}{2}\right)\mu|w\nabla\conj{w}-\conj{w}\nabla w|^2.
\end{align*}
This quadratic form is nonnegative if the matrix
$$
\begin{pmatrix}
\left(p-\frac{1}{2}-\frac{\delta}{2}\right)\mu&-\frac{1}{2}(p-1)\\
-\frac{1}{2}(p-1)&\left(\frac{1}{2}-\frac{\delta}{2}\right)\mu
\end{pmatrix}
$$
is nonnegative definite ($\Leftrightarrow$ has nonnegative trace and nonnegative determinant), i.e. the condition \eqref{w:assumption of p and mu} holds.
\end{proof}

The right hand side of (\ref{w:L^p inequality}) is written as
\begin{align*}
\int_0^t\langle|w|^{2p-2},\Re(\conj{w}G_c')\rangle(s)ds
=\sum_{i=1}^8\mathcal{I}_{(i)}(t),
\end{align*}
where
\begin{align*}
\mathcal{I}_{(1)}(t)&=-\nu\int_0^t\langle|w|^{2p-2},\Re\{\conj{w}(v^2\conj{v}+v^2\conj{w}+2v\conj{v}w+2vw\conj{w}+\conj{v}w^2)\}\rangle(s)ds,\\
\mathcal{I}_{(3)}(t)&=\int_0^t\langle|w|^{2p-2},\Re\{\conj{w}(G_{(3)}+cv)\}\rangle(s)ds,\\
\mathcal{I}_{(i)}(t)&=\int_0^t\langle|w|^{2p-2},\Re(\conj{w}G_{(i)})\rangle(s)ds\quad(i\neq1,3).
\end{align*}
In Lemmas \ref{w:estimateI1}-\ref{w:estimateI7}, we will show that each of $\mathcal{I}_{(i)}$s are controlled by the following integrals.
\begin{align*}
A_t&=\int_0^ta_sds,&a_s&=1+\|w(s)\|_{L^{2p+2}}^{2p+2},\\
B_t&=\int_0^tb_sds,&b_s&=1+\||\nabla w|^2|w|^{2p-2}(s)\|_{L^1},\\
C_t&=\int_0^tc_sds,&c_s&=\|w(s)\|_{\B_{\frac{2p+2}{3}}^{1+2\kappa'}}^{\frac{2p+2}{3}}.
\end{align*}
Here we put the extra term $1$ in the definitions of $a_s$ and $b_s$ to ensure that $a_s^\alpha\le a_s^\beta$ and $b_s^\alpha\le b_s^\beta$ for $\alpha<\beta$. Our main tools are discrete Young's inequality and Jensen's inequality:
\begin{itemize}
\setlength{\itemsep}{1mm}
\item For every $\alpha_1,\dots,\alpha_N>0$ such that $\sum\alpha_i=1$ and $\epsilon>0$, there exists $C_\epsilon$ such that
\[\prod_ix_i^{\alpha_i}\le C_\epsilon x_1+\epsilon\sum_{i\neq1}x_i\]
for every $x_i\ge0$.
\item Let $f(t)$ be a nonnegative and integrable function on $[0,T]$. Then there exists a constant $C_f$ such that, for every $p>1$ and nonnegative function $g(t)$ on $[0,T]$, we have
\[\left(\int_0^Tf(t)g(t)dt\right)^p\le C_f\int_0^Tf(t)g(t)^pdt.\]
\end{itemize}
In the following lemmas, we always write $C_\epsilon$ for a large constant depending only on $\epsilon,\mu,\nu,\lambda,c,\kappa,\kappa',p,T$ and $\$\mathbb{X}\$_{\kappa,T}$.

\begin{lemm}\label{w:estimateI1}
Let $p>1$ and $\epsilon>0$. For sufficiently large $c$ depending only on $\epsilon,\mu,\nu,\lambda,\kappa,\kappa',p,T$ and $\$\mathbb{X}\$_{\kappa,T}$, we have
\[\mathcal{I}_{(1)}(t)
\le\epsilon(\|v_0\|_{L^{2p+2}}^{2p+2}+A_t).\]
\end{lemm}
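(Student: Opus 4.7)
The plan is to dominate each of the five cubic terms making up $\mathcal{I}_{(1)}$ by integrated $L^{2p+2}$ norms of $v$ and $w$ through a weighted (epsilon) Young inequality, then invoke the a priori estimate \eqref{v:estimate of v in L^p} with exponent $2p+2$, and finally use the factor $e^{-ct}$ there to force the resulting constants below $\epsilon$ by taking $c$ large.

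First, each summand in the integrand of $\mathcal{I}_{(1)}$ is pointwise dominated by $|v|^j|w|^{2p+2-j}$ with $j\in\{1,2,3\}$. Applying the weighted Young inequality with conjugate exponents $(2p+2)/j$ and $(2p+2)/(2p+2-j)$ would give, for any $\eta>0$,
\begin{align*}
|\mathcal{I}_{(1)}(t)|\lesssim C_\eta\int_0^t\|v(s)\|_{L^{2p+2}}^{2p+2}\,ds+\eta A_t.
\end{align*}
Next, I would bound the first term using \eqref{v:estimate of v in L^p} with $p$ replaced by $2p+2$. Writing $K(\sigma)=e^{-c\sigma}\sigma^{-(1+\kappa')/2}$ and $M=\int_0^\infty K(\sigma)\,d\sigma\lesssim c^{-(1-\kappa')/2}$, Jensen's inequality against the probability measure $M^{-1}K(t-s)\mathbf{1}_{(0,t)}(s)\,ds$ should yield
\begin{align*}
\|v(t)\|_{L^{2p+2}}^{2p+2}\lesssim e^{-(2p+2)ct}\|v_0\|_{L^{2p+2}}^{2p+2}+M^{2p+1}\int_0^tK(t-s)(1+\|w(s)\|_{L^{2p+2}}^{2p+2})\,ds.
\end{align*}
Integrating in $t$ and swapping the order of integration in the convolution then produces
\begin{align*}
\int_0^t\|v(s)\|_{L^{2p+2}}^{2p+2}\,ds\lesssim c^{-1}\|v_0\|_{L^{2p+2}}^{2p+2}+M^{2p+2}A_t\lesssim c^{-1}\|v_0\|_{L^{2p+2}}^{2p+2}+c^{-(p+1)(1-\kappa')}A_t.
\end{align*}

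Combining these ingredients,
\begin{align*}
|\mathcal{I}_{(1)}(t)|\lesssim C_\eta c^{-1}\|v_0\|_{L^{2p+2}}^{2p+2}+\bigl(C_\eta c^{-(p+1)(1-\kappa')}+\eta\bigr)A_t.
\end{align*}
To conclude, given $\epsilon>0$ I would first fix $\eta$ small (independently of $c$) and then choose $c$ large depending on $\eta$ and the fixed parameters, so that both prefactors are at most $\epsilon$. The main obstacle is the bookkeeping in the Jensen step: it must yield a factor $M^{2p+2}$, a strictly positive power of the small quantity $c^{-(1-\kappa')/2}$, so that the coefficient of $A_t$ actually shrinks with $c$; and the order ``fix $\eta$ before $c$'' is essential, since $C_\eta$ enters as the prefactor of $M^{2p+2}$.
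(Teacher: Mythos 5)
Your proof is correct and follows essentially the same route as the paper: a weighted Young inequality on the cubic integrand, then bounding $\int_0^t\|v\|_{L^{2p+2}}^{2p+2}\,ds$ via \eqref{v:estimate of v in L^p}, Jensen's inequality, and Fubini, and finally sending $c\to\infty$. The only cosmetic difference is that you track the factor $M^{2p+2}$ from the Jensen normalization explicitly, whereas the paper simply records the final coefficient as $K(c)$; both vanish as $c\to\infty$, so the argument is the same.
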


\begin{proof}
By Young's inequality, we easily have
\[\mathcal{I}_{(1)}(t)\le\epsilon\int_0^t\|w(s)\|_{L^{2p+2}}^{2p+2}ds+C_\epsilon\int_0^t\|v(s)\|_{L^{2p+2}}^{2p+2}ds,\]
where the constant $C_\epsilon$ depends only on $\epsilon$ and $\nu$. From \eqref{v:estimate of v in L^p}, we have
\begin{align}\label{w:G_1:estimating v in L^p}
&\int_0^t\|v(s)\|_{L^{2p+2}}^{2p+2}ds\\\notag
&\lesssim\int_0^t\left\{e^{-cs}\|v_0\|_{L^{2p+2}}+\int_0^se^{-c(s-r)}(s-r)^{-\frac{1+\kappa'}{2}}a_r^{\frac1{2p+2}}dr\right\}^{2p+2}ds\\\notag
&\lesssim\int_0^te^{-(2p+2)cs}\|v_0\|_{L^{2p+2}}^{2p+2}+\int_0^t\int_0^se^{-c(s-r)}(s-r)^{-\frac{1+\kappa'}{2}}a_rdr\,ds\\\notag
&\lesssim \frac1c\|v_0\|_{L^{2p+2}}^{2p+2}+K(c)\int_0^ta_sds,
\end{align}
where $K(c)=\int_0^\infty e^{-cs}s^{-\frac{1+\kappa'}{2}}ds$. In the second inequality, we used Jensen's inequality. Since $\frac1c+K(c)\downarrow0$ as $c\to\infty$, we have the required estimate by choosing sufficiently large $c>0$.
\end{proof}

\begin{lemm}
For every $p>1$ and $\epsilon>0$, we have
\begin{align*}
\mathcal{I}_{(5)}(t)&
\le C_\epsilon+\epsilon(\|v_0\|_{\B_{2p+2}^{\frac{1}{2}+\kappa'}}^{2p+2}+A_t+C_t),\\
\mathcal{I}_{(8)}(t)&
\le C_\epsilon+\epsilon(\|v_0\|_{\B_{2p+2}^{\frac{1}{2}+\kappa'}}^{2p+2}+A_t+B_t+C_t).
\end{align*}
\end{lemm}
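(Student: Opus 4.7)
The plan is to bound each summand of $\mathcal{I}_{(5)}$ and $\mathcal{I}_{(8)}$ by isolating the factor $\|w\|_{L^{2p+2}}^{2p-1}=\||w|^{2p-2}\conj{w}\|_{L^{(2p+2)/(2p-1)}}$ through H\"older's inequality (or Besov duality) against the remaining factor, which is estimated in an appropriate Besov norm using the $R$-operator bound, the paraproduct estimates of Proposition \ref{para:paraproduct and resonant}, and Besov interpolation (Proposition \ref{para:interpolation}). The resulting products of norms are split into $A_t$, $B_t$, $C_t$, $\|v_0\|^{2p+2}_{\B^{1/2+\kappa'}_{2p+2}}$ (and a constant) by iterated weighted Young's inequalities, where I check in each case that the exponents sum to strictly less than $1$ so that an additional slot $x=1$ absorbs the residual into $C_\epsilon$. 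The integrals $\int_0^t\|v(s)\|^q_{\B^{1/2+\kappa'}_{2p+2}}ds$ for $q\in\{(2p+2)/3,2p+2\}$ that emerge are finally reduced to $\|v_0\|^{2p+2}_{\B^{1/2+\kappa'}_{2p+2}}+A_t$, up to $C_\epsilon$ terms, via \eqref{v:estimate of v in Besov}, Jensen's inequality, and Fubini.

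For $\mathcal{I}_{(5)}$, every summand is of the form $R(u,V,W)$ with $V\in\{\ICD,\ICC,\JDC,\JDD\}$ (regularity $1-\kappa$), $W\in\{\CD,\CC\}$ (regularity $-1-\kappa$), and $u\in\{v+w,\conj v+\conj w\}$. Applying the $R$-operator estimate from \cite[Proposition~A.9]{MW16} with $p_1=(2p+2)/3$, $p_2=p_3=\infty$ and a first-argument regularity $\alpha\in(2\kappa,1)$ gives
\[\|R(u,V,W)\|_{\B^{\alpha-2\kappa}_{(2p+2)/3}}\lesssim\|u\|_{\B^{\alpha}_{(2p+2)/3}},\]
which embeds into $L^{(2p+2)/3}$ by Proposition \ref{para:estimates of Besov norm} since $\alpha-2\kappa>0$. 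For the $v$-contribution I take $\alpha_v\in(2\kappa,1/2+\kappa')$ and interpolate $\|v\|_{\B^{\alpha_v}_{2p+2}}\le\|v\|_{L^{2p+2}}^{1-\theta_v}\|v\|_{\B^{1/2+\kappa'}_{2p+2}}^{\theta_v}$ with $\theta_v<1$ small, bounding the $L^{2p+2}$-part of $v$ through \eqref{v:estimate of v in L^p} exactly as in Lemma \ref{w:estimateI1}: the ``sufficiently large $c$'' hypothesis ensures $\int_0^t\|v\|^{2p+2}_{L^{2p+2}}ds\le\epsilon(\|v_0\|^{2p+2}_{L^{2p+2}}+A_t)$. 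For the $w$-contribution I take $\alpha_w\in(2\kappa,1)$ and interpolate $\|w\|_{\B^{\alpha_w}_{(2p+2)/3}}\le\|w\|_{L^{(2p+2)/3}}^{1-\theta_w}\|w\|_{\B^{1+2\kappa'}_{(2p+2)/3}}^{\theta_w}$, dominating $\|w\|_{L^{(2p+2)/3}}$ by $\|w\|_{L^{2p+2}}$ on $\T^3$. Since the resulting Young weights sum to $(2p+2\theta_w)/(2p+2)<1$, weighted Young's inequality yields $\epsilon(A_t+C_t)+C_\epsilon$.

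For $\mathcal{I}_{(8)}$, decompose $(v+w)\pr\CD=w\pr\CD+v\pr\CD$. The piece $w\pr\CD=\CD\pl w$ is handled by Proposition \ref{para:paraproduct and resonant}(2): interpolating $\|w\|_{\B^\beta_{(2p+2)/3}}$ for some $\beta\in(1+\kappa,1+2\kappa')$ produces positive regularity which embeds into $L^{(2p+2)/3}$, and the same mechanism as in $\mathcal{I}_{(5)}$ gives $\epsilon(A_t+C_t)+C_\epsilon$. The main obstacle is $v\pr\CD$: since $v$ has only regularity $1/2+\kappa'$, the product $v\pr\CD$ is a genuine distribution and no $L^{(2p+2)/3}$ estimate is available. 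Instead I use Besov duality with $r=2p+2$ and $r'=(2p+2)/(2p+1)$:
\[|\langle\phi,v\pr\CD\rangle|\lesssim\|\phi\|_{\B^{1/2+\kappa-\kappa'}_{r',1}}\|v\|_{\B^{1/2+\kappa'}_{2p+2}},\qquad\phi:=|w|^{2p-2}\conj w,\]
the second factor coming from Proposition \ref{para:paraproduct and resonant}(2). Proposition \ref{para:sobolev into besov} then bounds
\[\|\phi\|_{\B^{\alpha}_{r'}}\lesssim\|\phi\|_{L^{r'}}^{1-\alpha}\|\nabla\phi\|_{L^{r'}}^{\alpha}+\|\phi\|_{L^{r'}},\qquad\alpha=1/2+\kappa-\kappa'\in(0,1),\]
with $\|\phi\|_{L^{r'}}\le\|w\|^{2p-1}_{L^{2p+2}}$ and, by a H\"older split with intermediate $L^2$, $\|\nabla\phi\|_{L^{r'}}\lesssim\|w\|^{p-1}_{L^{2p+2}}\bigl(\int|\nabla w|^2|w|^{2p-2}\bigr)^{1/2}$; this $L^2$ factor is precisely what introduces the $B_t$-contribution. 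The dominant product $\|w\|^{(2p-1)-p\alpha}_{L^{2p+2}}\bigl(\int|\nabla w|^2|w|^{2p-2}\bigr)^{\alpha/2}\|v\|_{\B^{1/2+\kappa'}_{2p+2}}$ carries weights summing to $(4p+1)/(4p+4)<1$ for every $p\ge1$, leaving slack for weighted Young's inequality to distribute into $\epsilon(\|v_0\|^{2p+2}_{\B^{1/2+\kappa'}_{2p+2}}+A_t+B_t)+C_\epsilon$ after one more use of \eqref{v:estimate of v in Besov}. Combining with the $w$-piece and the conjugate terms $\conj v\pr\CC$, $\conj w\pr\CC$ yields the stated bound on $\mathcal{I}_{(8)}$.
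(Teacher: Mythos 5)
Your proposal is correct and gives a valid proof, but for $\mathcal{I}_{(8)}$ it takes a genuinely different route from the paper. The paper does \emph{not} split $G_{(8)}$ by linearity into its $v$- and $w$-parts: it treats the whole of $G_{(8)}$ at once via the duality pairing $|\langle|w|^{2p-2},\conj{w}G_{(8)}\rangle|\lesssim\|w|w|^{2p-2}\|_{\B_{(p+1)/p}^{1/2}}\|G_{(8)}\|_{\B_{p+1,1}^{-1/2}}$, then estimates $\|G_{(8)}\|_{\B_{p+1}^{-1/2+\kappa'-\kappa}}\lesssim 1+\|v\|_{\B_{p+1}^{1/2+\kappa'}}+\|w\|_{\B_{p+1}^{1/2+\kappa'}}$ uniformly via Proposition \ref{para:paraproduct and resonant}(2), applies Proposition \ref{para:sobolev into besov} once to $w|w|^{2p-2}$ to get $a^{\frac{2p-1/2}{2p+2}}+b^{\frac{2p-1/2}{2p+2}}$, and closes with the single interpolation $\|w\|_{\B_{p+1}^{1/2+\kappa'}}\lesssim\|w\|_{\B_{(2p+2)/3}^{1+2\kappa'}}^{1/2}\|w\|_{L^{2p+2}}^{1/2}$. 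You instead separate $w\pr\CD$ (which you correctly observe lands in $L^{(2p+2)/3}$ because $w$ has regularity $>1+\kappa$, so plain H\"older against $\||w|^{2p-2}\conj w\|_{L^{(2p+2)/(2p-1)}}$ suffices and no $B_t$ appears) from $v\pr\CD$ (which is a genuine distribution, so duality against $\|\phi\|_{\B_{(2p+2)/(2p+1)}^{1/2}}$ is needed, bringing in $b^{1/4}$ and hence $B_t$). Both give the stated bound; your accounting is slightly sharper in attributing $B_t$ only to the $v$-piece, at the cost of running two separate arguments and an extra interpolation of $\|w\|_{\B_{(2p+2)/3}^{\beta}}$ for the $w$-piece. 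For $\mathcal{I}_{(5)}$, which the paper omits, your $R$-operator/H\"older argument is the natural one and is presumably what the paper intends.

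Two small remarks. First, Proposition \ref{para:sobolev into besov} produces a $\B_{r',\infty}$ norm, whereas the duality pairing wants $\B_{r',1}^{1/2+\kappa-\kappa'}$; you need the intermediate step $\|\phi\|_{\B_{r',1}^{1/2+\kappa-\kappa'}}\lesssim\|\phi\|_{\B_{r',\infty}^{1/2}}$ from Proposition \ref{para:estimates of Besov norm}(1) (which is exactly what the paper does), rather than invoking \ref{para:sobolev into besov} directly at the sub-$1/2$ exponent. Second, the present Lemma, unlike Lemma \ref{w:estimateI1}, carries no ``for sufficiently large $c$'' hypothesis, and none is needed: in all your Young inequalities the weights sum strictly below $1$, so the small $\epsilon$ comes from that slack (and the $\|v_0\|_{L^{2p+2}}^{2p+2}$ contribution from \eqref{v:estimate of v in L^p} is dominated by $\|v_0\|_{\B_{2p+2}^{1/2+\kappa'}}^{2p+2}$, which appears in the claim with prefactor $\epsilon$). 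Invoking large $c$ here is therefore unnecessary and, strictly speaking, not available.
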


\begin{proof}
We focus on the second one since the first one is shown more easily. From Proposition \ref{para:estimates of Besov norm}, we have
\begin{align}\label{w:estimating G_8}
|\langle|w|^{2p-2},\conj{w}G_{(8)}\rangle|&
\lesssim\|w|w|^{2p-2}\|_{\B_{\frac{p+1}{p},\infty}^{\frac{1}{2}}}
\|G_{(8)}\|_{\B_{p+1,1}^{-\frac{1}{2}}}\\\notag
&\lesssim\|w|w|^{2p-2}\|_{\B_{\frac{p+1}{p},\infty}^{\frac{1}{2}}}
\|G_{(8)}\|_{\B_{p+1,\infty}^{-\frac{1}{2}+\kappa'-\kappa}}\\\notag
&\lesssim\|w|w|^{2p-2}\|_{\B_{\frac{p+1}{p}}^{\frac{1}{2}}}
(1+\|v\|_{\B_{p+1}^{\frac{1}{2}+\kappa'}}+\|w\|_{\B_{p+1}^{\frac{1}{2}+\kappa'}}).
\end{align}
We apply Proposition \ref{para:sobolev into besov} to $w|w|^{2p-2}=w(w\conj{w})^{p-1}$. Since
\begin{align*}
\nabla\{w(w\conj{w})^{p-1}\}&=p(w\conj{w})^{p-1}\nabla w+(p-1)(w\conj{w})^{p-2}w^2\nabla\conj{w}\\
&=|w|^{p-1}\nabla w\cdot p|w|^{p-1}+|w|^{p-1}\nabla\conj{w}\cdot(p-1)|w|^{p-3}w^2,
\end{align*}
by H\"older's inequality we have
\begin{align*}
\|\nabla(w|w|^{2p-2})\|_{L^{\frac{p+1}{p}}}
&\lesssim\||w|^{p-1}\nabla w\|_{L^2}\||w|^{p-1}\|_{L^{\frac{2p+2}{p-1}}}\\
&\lesssim\||w|^{2p-2}|\nabla w|^2\|_{L^1}^{\frac{1}{2}}\|w\|_{L^{2p+2}}^{p-1}=a^{\frac{p-1}{2p+2}}b^{\frac12}.
\end{align*}
Combining this with $\|w|w|^{2p-2}\|_{L^{\frac{p+1}{p}}}\lesssim\|w\|_{L^{2p+2}}^{2p-1}=a^{\frac{2p-1}{2p+2}}$, we have
\begin{align*}
\|w|w|^{2p-2}\|_{\B_{\frac{p+1}{p}}^{\frac{1}{2}}}
&\lesssim\|w|w|^{2p-2}\|_{L^{\frac{p+1}{p}}}^{\frac{1}{2}}
\|\nabla(w|w|^{2p-2})\|_{L^{\frac{p+1}{p}}}^{\frac{1}{2}}
+\|w|w|^{2p-2}\|_{L^{\frac{p+1}{p}}}\\
&\lesssim a^{\frac{1}{2}\frac{2p-1}{2p+2}+\frac{1}{2}\frac{p-1}{2p+2}}b^{\frac{1}{4}}+a^{\frac{2p-1}{2p+2}}\lesssim a^\alpha+b^\alpha,
\end{align*}
where $\alpha=\frac{2p-\frac{1}{2}}{2p+2}$.

We consider the time integral of (\ref{w:estimating G_8}). For the term involving $v$,  by Young's inequality we have
\begin{align*}
&\int_0^t(1+\|v(s)\|_{\B_{p+1}^{\frac{1}{2}+\kappa'}})(a_s^\alpha+b_s^\alpha)ds\lesssim C_\epsilon+\epsilon\int_0^t(1+\|v(s)\|_{\B_{2p+2}^{\frac{1}{2}+\kappa'}}^{2p+2})ds+\epsilon\int_0^t(a_s+b_s)ds,
\end{align*}
since $\frac{1}{2p+2}+\alpha<1$. The second term is estimated by the similar computations to those in (\ref{w:G_1:estimating v in L^p}) as follows.
\begin{align*}
\int_0^t\|v(s)\|_{\B_{2p+2}^{\frac{1}{2}+\kappa'}}^{2p+2}ds
&\lesssim\int_0^t\left\{\|v_0\|_{\B_{2p+2}^{\frac{1}{2}+\kappa'}}+\int_0^s(s-r)^{-\frac{3}{4}-\kappa'}(1+\|w(r)\|_{L^{2p+2}})dr\right\}^{2p+2}ds\\
&\lesssim\|v_0\|_{\B_{2p+2}^{\frac{1}{2}+\kappa'}}^{2p+2}+A_t.
\end{align*}
For the term involving $w$, we need the interpolation
\[\|w\|_{\B_{p+1}^{\frac{1}{2}+\kappa'}}\lesssim\|w\|_{\B_{\frac{2p+2}{3}}^{1+2\kappa'}}^{\frac{1}{2}}\|w\|_{L^{2p+2}}^{\frac{1}{2}}\le a^{\frac{1}{2}\frac{1}{2p+2}}c^{\frac{1}{2}\frac{3}{2p+2}}\lesssim a^{\frac{1}{p+1}}+c^{\frac{1}{p+1}}.\]
Since $\frac{1}{p+1}+\alpha<1$, by Young's inequality we have
\[\int_0^t(a_s^{\frac{1}{p+1}}+c_s^{\frac{1}{p+1}})(a_s^\alpha+b_s^\alpha)ds\lesssim
C_\epsilon+\epsilon\int_0^t(a_s+b_s+c_s)ds.\]
These complete the proof.
\end{proof}

\begin{lemm}\label{w:lemm:estimate of G_6}
Let $p\in(1,5)$ and assume $\frac{5}{2}\kappa'\le\frac{3}{4}-\frac{3}{2p+2}$. For every $\epsilon>0$, we have
\begin{align*}
\mathcal{I}_{(6)}(t)&
\le C_\epsilon C_t+\epsilon(\|v_0\|_{\B_{2p+2}^{\frac{1}{2}+\kappa'}}^{2p+2}+A_t).
\end{align*}
\end{lemm}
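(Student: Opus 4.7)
The plan is to mirror the strategy of the preceding lemma for $G_{(8)}$, but with the regularity of $\com(v,w)$ (via Corollary~\ref{v:estimate of com in Besov}) taking the place of $\|v\|_{\B_{p+1}^{1/2+\kappa'}}$. A crucial design choice is to pair $w|w|^{2p-2}$ against $G_{(6)}$ in a plain Lebesgue norm (with no derivative of $w$), so that $B_t$ never appears on the right-hand side.

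\textbf{Step 1 (duality and resonance).} Since $G_{(6)}=-\nu\{2\com(v,w)\rs\CD+\overline{\com(v,w)}\rs\CC\}$, and $\com(v,w)\in\B_{p_0}^{1+\kappa'}$ while $\CD,\CC\in\B_\infty^{-1-\kappa}$, Proposition~\ref{para:paraproduct and resonant}(3) gives $\|\com(v,w)\rs\CD\|_{\B_{p_0}^{\kappa'-\kappa}}\lesssim\|\com(v,w)\|_{\B_{p_0}^{1+\kappa'}}$. Choosing $p_0=\frac{2p+2}{3}$ and its H\"older conjugate $p_0'=\frac{2p+2}{2p-1}$, Besov duality combined with the embedding $\|\cdot\|_{\B_{p_0',1}^{-(\kappa'-\kappa)}}\lesssim\|\cdot\|_{L^{p_0'}}$ yields
\[|\mathcal{I}_{(6)}(t)|\lesssim \int_0^t\|w|w|^{2p-2}(s)\|_{L^{p_0'}}\,\|\com(v,w)(s)\|_{\B_{p_0}^{1+\kappa'}}\,ds.\]
Since $(2p-1)p_0'=2p+2$, the first factor equals $\|w(s)\|_{L^{2p+2}}^{2p-1}\le a_s^{\alpha}$ with $\alpha=(2p-1)/(2p+2)$.

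\textbf{Step 2 (the $C_t$-producing term).} Substitute the bound of Corollary~\ref{v:estimate of com in Besov} with $p=p_0$, using $\|v_0\|_{\B_{p_0}^{1/2+\kappa'}}\le\|v_0\|_{\B_{2p+2}^{1/2+\kappa'}}$, $\|w\|_{L^{p_0}}\le a^{1/(2p+2)}$ and $\|w(s)\|_{\B_{p_0}^{1+2\kappa'}}=c_s^{1/p_0}$. The term producing $C_t$ is the convolution $h(s):=\int_0^s(s-r)^{-(1+3\kappa')/2}c_r^{1/p_0}\,dr$: Young's convolution inequality gives $\|h\|_{L^{p_0}_t}\lesssim C_t^{1/p_0}$, since the kernel lies in $L^1_t$ ($\tfrac{1+3\kappa'}{2}<1$). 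H\"older in $s$ with exponents $1/\alpha$ and $p_0=1/(1-\alpha)$ bounds $\int_0^t a_s^\alpha h(s)\,ds$ by $A_t^{\alpha}\,C_t^{1/p_0}$, and a scaled two-term Young step (using $(1/p_0)\cdot p_0=1$) yields the contribution $\epsilon A_t+C_\epsilon C_t$. The same H\"older pairing applied to the $s^{-1/4}\|v_0\|$ term demands $\int_0^T s^{-p_0/4}\,ds<\infty$, hence the constraint $p_0<4$, i.e.\ $p<5$.

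\textbf{Step 3 (remaining terms and the constraint on $\kappa'$).} The other inhomogeneous pieces of the $\com$ bound (the $O(1)$ term, $s^{-1/4}\|v_0\|$, $s^{-\kappa'}(1+\|w\|_{L^{p_0}})$, and $s^{-\kappa'}\int(s-r)^{-(1+\kappa')/2}(1+\|w\|_{L^{p_0}})\,dr$) are treated by the same scheme: multiply by $a_s^\alpha$, swap integration orders by Fubini when an inner integral is present, apply H\"older in $s$ with the $a$-factor receiving exponent $1/\alpha$, and finally use a three-term Young inequality with scalar rescaling (e.g.\ $\|v_0\|\cdot A_t^\alpha\cdot 1=(\lambda_1\|v_0\|)(\lambda_2 A_t^\alpha)(\lambda_1\lambda_2)^{-1}$ with $\lambda_i$ chosen small) to arrive at $\epsilon\|v_0\|^{2p+2}+\epsilon A_t+C_\epsilon$. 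Integrability of the compound singular weight $s^{-\kappa'}(s-r)^{-(1+\kappa')/2}$ after the H\"older step---combined with the analogous weighted-integrability balance arising from interpolating $\|w(s)\|_{L^{p_0}}$ between $L^{2p+2}$ and $\B_{p_0}^{1+2\kappa'}$---is precisely what produces the numerical hypothesis $\tfrac{5}{2}\kappa'\le\tfrac{3}{4}-\tfrac{3}{2p+2}$. Standalone $C_\epsilon$ constants are absorbed into the $C_\epsilon C_t$ term.

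\textbf{Main obstacle.} The most delicate piece is the integral $\int_0^s(s-r)^{-1-(\kappa+\kappa')/2}\|\delta_{rs}w\|_{L^{p_0}}\,dr$ from the $\com$ bound, whose kernel is non-integrable in $r$. To recover integrability I will use the mild-solution identity $\delta_{rs}w=(e^{(s-r)L_\mu}-1)w(r)+\int_r^s e^{(s-\tau)L_\mu}(G(v,w)+cv)(\tau)\,d\tau$ together with the semigroup estimate $\|(e^{\tau L_\mu}-1)u\|_{L^{p_0}}\lesssim\tau^{(1+2\kappa'-\eta)/2}\|u\|_{\B_{p_0}^{1+2\kappa'}}$ for small $\eta>0$: this supplies the extra positive power of $(s-r)$ needed to integrate the $(s-r)^{-1-(\kappa+\kappa')/2}$ kernel, and the remaining $G$-piece is then of the same type as the terms already treated and fits into the same $\epsilon\|v_0\|^{2p+2}+\epsilon A_t+C_\epsilon C_t$ framework.
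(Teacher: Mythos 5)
Your Steps 1 and 2 match the paper: H\"older/duality in $x$ with exponents $p_0=\tfrac{2p+2}{3}$ and $p_0'=\tfrac{2p+2}{2p-1}$ to extract $\|\com(v,w)\|_{\B_{p_0}^{1+\kappa'}}\|w\|_{L^{2p+2}}^{2p-1}$, then H\"older and Young in $t$ to split off $A_t$, then term-by-term substitution of the $\com$ bound. Where you diverge is the ``main obstacle'' — the non-integrable term $\int_0^t(t-s)^{-1-\frac{\kappa+\kappa'}{2}}\|\delta_{st}w\|_{L^{p_0}}\,ds$ — and this is where I see a genuine gap.

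Your plan is to expand $\delta_{rs}w=(e^{(s-r)L_\mu}-1)w(r)+\int_r^s e^{(s-\tau)L_\mu}(G+cv)(\tau)\,d\tau$ and extract positive powers of $(s-r)$. The first piece works: $\|(e^{(s-r)L_\mu}-1)w(r)\|_{L^{p_0}}\lesssim(s-r)^{\frac{1+2\kappa'-\eta}{2}}\|w(r)\|_{\B_{p_0}^{1+2\kappa'}}$ gives an integrable kernel and produces a $C_t$-type contribution. But the second piece is not ``of the same type as the terms already treated,'' because the integrand contains $G_{(6)}(\tau)=-\nu\{2\com(v,w)(\tau)\rs\CD(\tau)+\overline{\com(v,w)(\tau)}\rs\CC(\tau)\}$, which can only be bounded by $\|\com(v,w)(\tau)\|_{\B_{p_0}^{1+\kappa'}}$ — the very quantity you are trying to control. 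After Fubini and Young's convolution you land on a self-referential inequality of the form $\Phi(T)\lesssim(\text{data terms})+T^{\theta}\Phi(T)$ with $\Phi(T)=\int_0^T\|\com(v,w)(s)\|_{\B_{p_0}^{1+\kappa'}}^{p_0}ds$ and $\theta>0$, which absorbs only for small $T$ and does not give the lemma on an arbitrary fixed interval. The paper avoids this by \emph{not} iterating the mild formula inside the $\com$ bound: instead it proves the pointwise H\"older-type estimate (Theorem~\ref{dw:goal}) $\|\delta_{st}w\|_{L^{p_0}}\lesssim(t-s)^{2\kappa'}(1+\|v_0\|_{\B_{2p+2}^{\frac12+\kappa'}}^{2p+2}+c_s+A_t+C_t)^{\frac{3}{2p+2}}$ separately, handling the self-reference coming from $\mathcal{W}_{(6)}$ by introducing $\$w\$_{q,\kappa';t}=\sup_{u<r}\|\delta'_{ur}w\|_{L^q}/|r-u|^{2\kappa'}$, splitting $\|\delta'_{ur}w\|_{L^q}\lesssim\|\delta'_{ur}w\|_{L^q}^{1/2}(\|w(r)\|_{L^q}^{1/2}+\|w(u)\|_{L^q}^{1/2})$, and closing with the elementary fact $x\le a+\sqrt{bx}\Rightarrow x\lesssim a+b$. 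Without some mechanism of this kind the recursion does not close.

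A secondary point: you attribute the hypothesis $\tfrac52\kappa'\le\tfrac34-\tfrac3{2p+2}$ to integrability of a weight like $s^{-\kappa'}(s-r)^{-(1+\kappa')/2}$ in Step 3, but in the paper that condition is inherited from Theorem~\ref{dw:goal} — it is exactly what guarantees that every power of $(t-s)$ produced by the $\mathcal{W}_{(i)}$-estimates (the smallest being $\tfrac34-\tfrac1q-\tfrac12\kappa'$) is at least $2\kappa'$, so that the $\$w\$$-seminorm is finite.
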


\begin{proof}
Since
\begin{align*}
|\langle|w|^{2p-2},\conj{w}G_{(6)}\rangle|\lesssim\|G_{(6)}\|_{L^{\frac{2p+2}{3}}}\|w|w|^{2p-2}\|_{L^{\frac{2p+2}{2p-1}}}\lesssim\|\com(v,w)\|_{\B_{\frac{2p+2}{3}}^{1+\kappa'}}\|w\|_{L^{2p+2}}^{2p-1},
\end{align*}
we have
\begin{align*}
\mathcal{I}_{(6)}(t)
&\lesssim\left(\int_0^t\|\com(v,w)(s)\|_{\B_{\frac{2p+2}{3}}^{1+\kappa'}}^{\frac{2p+2}{3}}ds\right)^{\frac{3}{2p+2}}\left(\int_0^ta_sds\right)^{\frac{2p-1}{2p+2}}\\
&\lesssim C_\epsilon\int_0^t\|\com(v,w)(s)\|_{\B_{\frac{2p+2}{3}}^{1+\kappa'}}^{\frac{2p+2}{3}}ds+\epsilon A_t
\end{align*}
We consider the time integral of each term in \eqref{v:estimate of com in Besov}. The first term is trivial. Integrability of the second term $t^{-\frac{1}{4}}\|v_0\|_{\B_{\frac{2p+2}3}^{\frac{1}{2}+\kappa'}}$ is easy because $\frac{1}{4}\frac{2p+2}{3}<1$ by assumption. For the third and fourth terms, because $\kappa'(p+1)<6\kappa'<1$ we have
\begin{align*}
\int_0^t\{s^{-\kappa'}(1+\|w(s)\|_{L^{\frac{2p+2}{3}}})\}^{\frac{2p+2}{3}}ds
\lesssim\left(\int_0^ts^{-\kappa'(p+1)}ds\right)^{\frac{2}{3}}
\left(\int_0^ta_s\right)^{\frac{1}{3}}
\lesssim A_t^{\frac{1}{3}},
\end{align*}
and
\begin{align*}
&\int_0^t\left\{s^{-\kappa'}\int_0^s(s-r)^{-\frac{1+\kappa'}{2}}(1+\|w(r)\|_{L^{\frac{2p+2}{3}}})dr\right\}^{\frac{2p+2}{3}}ds,\\
&\lesssim\left(\int_0^ts^{-\kappa'(p+1)}ds\right)^{\frac{2}{3}}
\left[\int_0^t\left\{\int_0^s(s-r)^{-\frac{1+\kappa'}{2}}a_r^{\frac{1}{2p+2}}dr\right\}^{2p+2}ds\right]^{\frac{1}{3}}\lesssim A_t^{\frac{1}{3}}.
\end{align*}
For the fifth term, we have
\begin{align*}
\int_0^t\left\{\int_0^s(s-r)^{-\frac{1+3\kappa'}{2}}\|w(r)\|_{\B_{\frac{2p+2}{3}}^{1+2\kappa'}}dr\right\}^{\frac{2p+2}{3}}ds\lesssim\int_0^t\|w(s)\|_{\B_{\frac{2p+2}{3}}^{1+2\kappa'}}^{\frac{2p+2}{3}}ds=C_t.
\end{align*}
For the last term, we need the following estimate.
\begin{align}\label{w:G_6:estimate of deltaw}
\|\delta_{st}w\|_{L^{\frac{2p+2}{3}}}\lesssim(t-s)^{2\kappa'}(1+\|v_0\|_{\B_{2p+2}^{\frac{1}{2}+\kappa'}}^{2p+2}+c_s+A_t+C_t)^{\frac{3}{2p+2}}.
\end{align}
Since the proof of this estimate requires many pages, we show it in the next section. Now we assume that \eqref{w:G_6:estimate of deltaw} is true. Let $N_t=1+\|v_0\|_{\B_{2p+2}^{\frac{1}{2}+\kappa'}}^{2p+2}+A_t+C_t$. Then for small $\delta>0$, we have
\begin{align*}
&\int_0^t\left\{\int_{s-\delta}^s(s-r)^{-1-\kappa'}\|\delta_{rs}w\|_{L^{\frac{2p+2}{3}}}dr\right\}^{\frac{2p+2}{3}}ds\\
&\lesssim\int_0^t\left\{\int_{s-\delta}^s(s-r)^{-1+\kappa'}(N_s+c_r)^{\frac{3}{2p+2}}dr\right\}^{\frac{2p+2}{3}}ds\\
&\lesssim\int_0^tN_s\left\{\int_{s-\delta}^s(s-r)^{-1+\kappa'}dr\right\}^{\frac{2p+2}{3}}ds+\int_0^tc_sds\lesssim\delta^{\frac{2p+2}{3}\kappa'}N_t+C_t.
\end{align*}
For the integral on $[0,s-\delta]$, we have
\begin{align*}
&\int_0^t\left\{\int_0^{s-\delta}(s-r)^{-1-\kappa'}\|\delta_{rs}w\|_{L^{\frac{2p+2}{3}}}dr\right\}^{\frac{2p+2}{3}}ds\\
&\lesssim\int_0^t\delta^{-\frac{2p+2}{3}(1+\kappa')}\left\{\int_0^s(\|w(s)\|_{L^{\frac{2p+2}{3}}}+\|w(r)\|_{L^{\frac{2p+2}{3}}})dr\right\}^{\frac{2p+2}{3}}ds
\lesssim\delta^{-\frac{2p+2}{3}(1+\kappa')}C_t.
\end{align*}
To sum up, we have
\[\int_0^t\|\text{com}(v,w)(s)\|_{\B_{\frac{2p+2}{3}}^{1+\kappa'}}^{\frac{2p+2}{3}}ds\lesssim\|v_0\|_{\B_{2p+2}^{\frac{1}{2}+\kappa'}}^{\frac{2p+2}{3}}+A_t^{\frac{1}{3}}+\delta^{\frac{2p+2}{3}\kappa'} N_t+C_\delta C_t.\]
These complete the proof.
\end{proof}

The following lemma is obtained similarly to \cite[Lemmas~5.6 and 5.7]{MW16}, so we omit the proof.

\begin{lemm}\label{w:estimateI7}
For every $p>1$ and $\epsilon>0$, we have
\begin{align*}
\mathcal{I}_{(2)}(t)&
\le C_\epsilon+\epsilon(\|v_0\|_{\B_{2p+2}^{\frac{1}{2}+\kappa'}}^{2p+2}+A_t+B_t),\\
\mathcal{I}_{(3)}(t)&
\le C_\epsilon+\epsilon(\|v_0\|_{\B_{2p+2}^{\frac{1}{2}+\kappa'}}^{2p+2}+A_t+B_t),\\
\mathcal{I}_{(4)}(t)&
\le C_\epsilon+\epsilon(A_t+B_t),\\
\mathcal{I}_{(7)}(t)&
\le C_\epsilon C_t+\epsilon A_t.
\end{align*}
\end{lemm}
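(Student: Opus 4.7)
The plan is to treat all four estimates by the same template used for $\mathcal{I}_{(8)}$ in the preceding lemma: insert a Besov duality pairing $|\langle f,g\rangle|\lesssim\|f\|_{\B_{p_1,q_1}^{\alpha}}\|g\|_{\B_{p_2,q_2}^{-\alpha}}$ (with $\frac1{p_1}+\frac1{p_2}=1$, $\frac1{q_1}+\frac1{q_2}=1$) between $|w|^{2p-2}\conj w$ and $G_{(i)}$, control $\|w|w|^{2p-2}\|_{\B_{\frac{2p+2}{2p-1}}^{\alpha}}$ via Proposition \ref{para:sobolev into besov} by a power $a_s^{\alpha}+b_s^{\alpha}$ (when $\alpha\le\frac12$), estimate the $G_{(i)}$ side using Proposition \ref{para:paraproduct and resonant} combined with the a priori Besov regularities of the driver $\mathbb{X}$, and conclude by discrete Young's inequality, tracking the $L^{2p+2}$ norm of $v$ through the bound \eqref{v:estimate of v in L^p} and Jensen just as in Lemma \ref{w:estimateI1}.

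For $\mathcal{I}_{(4)}$, $G_{(4)}$ is entirely built from stochastic data, so its $\B_{\infty}^{-\frac12-\kappa}$ norm is finite and the pairing with $\alpha=\frac12+\kappa'$ gives $\int_0^t(a_s^{\alpha}+b_s^{\alpha})\,ds$, which Young absorbs into $\epsilon(A_t+B_t)+C_\epsilon$. For $\mathcal{I}_{(3)}$, the linear term $G_{(3)}+cv$ carries stochastic coefficients of regularity $(-\kappa)$, so one pairs with $\alpha$ slightly positive and obtains a factor $\|v+w\|_{L^{2p+2}}$; the $v$-part is then controlled by \eqref{v:estimate of v in L^p} and Jensen, giving exactly $\epsilon(\|v_0\|^{2p+2}+A_t+B_t)+C_\epsilon$ upon choosing $c$ large. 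For $\mathcal{I}_{(2)}$, which is quadratic in $(v,w)$ times a noise term of regularity $(-\frac12)^-$, the same pairing with $\alpha=\frac12+\kappa'$ applies, followed by Hölder $\|(v+w)^2\|_{L^{\frac{p+1}{p+\frac12}}}\lesssim\|v+w\|_{L^{2p+2}}^2$, and the $L^{2p+2}$ norm of $v$ is handled exactly as in Lemma \ref{w:estimateI1}.

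The only case that truly uses $C_t$ rather than merely $A_t,B_t$ is $\mathcal{I}_{(7)}$, because $G_{(7)}$ contains the resonant $w\rs\CD$ with $\CD\in\B_\infty^{-1-\kappa}$. By Proposition \ref{para:paraproduct and resonant}(3), one has
\[
\|w\rs\CD\|_{\B_{\frac{2p+2}3}^{-\kappa'}}\lesssim\|w\|_{\B_{\frac{2p+2}3}^{1+2\kappa'}}\|\CD\|_{\B_\infty^{-1-\kappa}},
\]
and by duality
\[
|\langle|w|^{2p-2}\conj w,w\rs\CD\rangle|\lesssim\|w|w|^{2p-2}\|_{\B_{\frac{2p+2}{2p-1}}^{\kappa'}}\|w\rs\CD\|_{\B_{\frac{2p+2}3}^{-\kappa'}}.
\]
The first factor is bounded by $a_s^{(2p-1)/(2p+2)}$ via Proposition \ref{para:sobolev into besov} (for the small regularity $\kappa'$, no $b_s$ is required), so time-integration, Hölder in time with exponents $\bigl(\frac{2p+2}{2p-1},\frac{2p+2}3\bigr)$, and Young's inequality yield $\mathcal{I}_{(7)}(t)\le\epsilon A_t+C_\epsilon C_t$.

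The main obstacle I expect is precisely the exponent bookkeeping in $\mathcal{I}_{(7)}$: the regularity $-1^-$ of $\CD$ forces the full derivative to be absorbed by $w$ itself (since $w|w|^{2p-2}$ is only controlled up to $\B^{1/2}$), which is why no $\|v_0\|$ or $B_t$ term is affordable and only $C_t+A_t$ suffices. For $\mathcal{I}_{(2)}$ and $\mathcal{I}_{(3)}$ the parallel difficulty is to keep the appearance of $\|v(s)\|_{L^{2p+2}}$ at exactly first power after Hölder so that the smallness factor $\frac1c+K(c)$ from Jensen makes the contribution acceptable; in contrast, the $\B_{2p+2}^{1/2+\kappa'}$ norm of $v$, controlled by \eqref{v:estimate of v in Besov}, produces the $\|v_0\|_{\B_{2p+2}^{1/2+\kappa'}}^{2p+2}$ term in the bounds. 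Aside from this balancing, all steps are routine repetitions of the techniques already displayed in the proofs of Lemmas \ref{w:estimateI1}–\ref{w:lemm:estimate of G_6}, which justifies omitting the detailed calculations.
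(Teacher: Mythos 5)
Your overall template --- Besov duality pairing, Proposition \ref{para:sobolev into besov} for $\|w|w|^{2p-2}\|_{\B^{\alpha}}$, then discrete Young --- is the right one and matches what the paper does for $\mathcal{I}_{(8)}$ and what MW16 does in its Lemmas 5.6--5.7. For $\mathcal{I}_{(4)}$ the argument goes through as you describe.

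The genuine gap is in $\mathcal{I}_{(7)}$, and it is the step you yourself flag as delicate. You put the positive regularity $\kappa'$ on the $w|w|^{2p-2}$ side and claim that, since $\kappa'$ is tiny, no $b_s$ appears. That is not correct: Proposition \ref{para:sobolev into besov} forces in $\|\nabla(w|w|^{2p-2})\|_{L^{\frac{2p+2}{2p-1}}}$ as soon as $\alpha>0$, and for $p<2$ this quantity does not even factor into powers of $a_s$ and $b_s$. Indeed, splitting $|w|^{2p-2}|\nabla w|=(|w|^{p-1}|\nabla w|)\cdot|w|^{p-1}$ and putting the first factor in $L^2$ (to hit $b_s$) forces the second factor into $L^r$ with $\frac1r=\frac{2p-1}{2p+2}-\frac12=\frac{p-2}{2(p+1)}$, which is negative when $p<2$ --- precisely the range the paper needs. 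The fix is to orient the pairing the other way. Since $\kappa<\kappa'$, the resonant $w\rs\CD$ already has positive regularity, $\|w\rs\CD\|_{\B_q^{2\kappa'-\kappa}}\lesssim\|w\|_{\B_q^{1+2\kappa'}}\|\CD\|_{\B_\infty^{-1-\kappa}}$, hence $\|w\rs\CD\|_{L^q}\lesssim c_s^{1/q}$. A plain $L^p$ Hölder inequality then gives
\[
|\langle|w|^{2p-2},\conj w\,G_{(7)}\rangle|\lesssim\|w|w|^{2p-2}\|_{L^{\frac{2p+2}{2p-1}}}\|w\rs\CD\|_{L^{\frac{2p+2}{3}}}\lesssim a_s^{\frac{2p-1}{2p+2}}c_s^{\frac{3}{2p+2}},
\]
and Young closes it as $\epsilon A_t+C_\epsilon C_t$. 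So, contrary to your assessment, $\mathcal{I}_{(7)}$ is the \emph{easiest} of the four; no gradient of $w|w|^{2p-2}$, no $v_0$, and no $B_t$ are needed at all.

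Two smaller points. For $\mathcal{I}_{(3)}$ you pair ``at $\alpha$ slightly positive'', asserting the stochastic coefficients have regularity $(-\kappa)$. In fact $P_1(v+w)$ carries the products $\ICCD\JY$, $\JDDC\IX$ and $\ICCD\IX$, whose paraproduct components $\ICCD\pl\JY$ etc.\ have regularity $(-\frac12-\kappa)$, so the pairing must be at level $\frac12+\kappa'$, exactly as for $\mathcal{I}_{(4)}$ and $\mathcal{I}_{(8)}$; this is also why $B_t$ legitimately appears in the stated bound. For $\mathcal{I}_{(2)}$, your Hölder exponent $\frac{p+1}{p+1/2}$ is off: the conjugate of $\frac{p+1}{p}$ (the exponent under which $w|w|^{2p-2}$ is controlled) is $p+1$, so the correct factor is $\|(v+w)^2\|_{L^{p+1}}=\|v+w\|_{L^{2p+2}}^2$; your exponent would dualize against $\|w|w|^{2p-2}\|_{L^{2p+2}}$, which is not controlled by $a_s$.
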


Now we can obtain Theorem \ref{w:control L^p by Besov} by combining these bounds and choosing small $\epsilon$ compared with $\delta\mu$ and $\Re\nu$ in \eqref{w:L^p inequality}.

\section{A priori estimate of $\delta w$}\label{section:apriori_dw}

In this section, we show \eqref{w:G_6:estimate of deltaw} and complete the proof of Theorem \ref{w:control L^p by Besov}. We can obtain a simpler result than \cite[Theorem~4.1]{MW16}.

\begin{theo}\label{dw:goal}
Let $p>1$ be such that $\frac{5}{2}\kappa'\le\frac{3}{4}-\frac{3}{2p+2}$. For $0\le s\le t\le T$, we have
\begin{align*}
\|\delta_{st}w\|_{L^{\frac{2p+2}3}}\lesssim(t-s)^{2\kappa'}(1+\|v_0\|_{\B_{2p+2}^{\frac{1}{2}+\kappa'}}^{2p+2}+\|w(s)\|_{\B_{\frac{2p+2}{3}}^{1+2\kappa'}}^{\frac{2p+2}{3}}+A_t+C_t)^{\frac{3}{2p+2}},
\end{align*}
where the implicit constant depends only on $\mu,\nu,\lambda,c,\kappa,\kappa',p,T$ and $\$\mathbb{X}\$_{\kappa,T}$.
\end{theo}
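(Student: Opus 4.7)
The plan is to work from the mild formulation of the $w$-equation. Writing
$$\delta_{st}w = (e^{(t-s)L_\mu}-1)w(s) + \int_s^t e^{(t-r)L_\mu}\bigl(G(v,w)+cv\bigr)(r)\,dr =: I_1 + I_2,$$
I would handle $I_1$ directly: the heat-semigroup estimate with $\delta=2\kappa'$, applied to the regularity $1+2\kappa'$ of $w(s)$, together with the embedding $\B^{1-2\kappa'}_{(2p+2)/3,1}\hookrightarrow L^{(2p+2)/3}$, gives
$$\|I_1\|_{L^{(2p+2)/3}}\lesssim (t-s)^{2\kappa'}\|w(s)\|_{\B^{1+2\kappa'}_{(2p+2)/3}} = (t-s)^{2\kappa'} c_s^{3/(2p+2)},$$
which already fits the claimed form since $c_s^{3/(2p+2)}$ is dominated by $N_t^{3/(2p+2)}$, where $N_t$ denotes the entire parenthesised expression on the right-hand side.

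For $I_2$ I would expand $G=\sum_{i=1}^8 G_{(i)}$ and, for each piece, choose an exponent pair $(\alpha_i,q_i)$ so that the heat semigroup gives $\|e^{(t-r)L_\mu}G_{(i)}(r)\|_{L^{(2p+2)/3}}\lesssim (t-r)^{-\beta_i}\|G_{(i)}(r)\|_{\B^{-\alpha_i}_{q_i}}$ with $\beta_i<1-2\kappa'$, and then control $\|G_{(i)}(r)\|_{\B^{-\alpha_i}_{q_i}}$ by products of $\|v\|_{L^{2p+2}}$, $\|w\|_{L^{2p+2}}$, $\|w\|_{\B^{1+2\kappa'}_{(2p+2)/3}}$ and the driver norms, using Bony paraproduct/resonant estimates (Proposition \ref{para:paraproduct and resonant}) and the trilinear commutator $R$ as in Section \ref{section:apriori_w}. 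The extraction of the $(t-s)^{2\kappa'}$ factor is then a matter of applying Hölder's inequality on $[s,t]$ and reorganising the resulting double integrals by Jensen.

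The sharpest among these is the cubic $G_{(1)}=-\nu(v+w)^2(\bar v+\bar w)$, which I would bound directly in $L^{(2p+2)/3}$ by $\|v+w\|_{L^{2p+2}}^3$ and then integrate in time: Hölder gives the factor $(t-s)^{1-3/(2p+2)}$, and the assumption $\tfrac{5}{2}\kappa'\le\tfrac{3}{4}-\tfrac{3}{2p+2}$ guarantees $1-3/(2p+2)\ge 2\kappa'$ (in fact strictly), so the excess power of $(t-s)$ is absorbed into a constant times a power of $T$; the resulting $A_t$-contribution is bounded using the $v$-estimate of Section \ref{section:apriori_v}. The paraproduct/resonant pieces $G_{(7)},G_{(8)}$ and the lower-order polynomial pieces $G_{(2)},\dots,G_{(5)}$ are treated by the same Young--Jensen scheme and contribute $(t-s)^{2\kappa'}N_t^{3/(2p+2)}$ as well.

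The main obstacle I expect is the commutator term $G_{(6)}=-\nu\{2\com(v,w)\rs\CD+\overline{\com(v,w)}\rs\CC\}$: the only available bound on $\com(v,w)$ in $\B^{1+\kappa'}_{(2p+2)/3}$ (Lemma \ref{local:estimate of com}) feeds a term $\int_0^r(r-\tau)^{-1-(\kappa+\kappa')/2}\|\delta_{\tau r}w\|_{L^{(2p+2)/3}}\,d\tau$ back into the right-hand side, producing an a priori circular dependence on the quantity being estimated. To close this, I would set up a bootstrap: postulate the ansatz $\|\delta_{\sigma\rho}w\|_{L^{(2p+2)/3}}\le K(\rho-\sigma)^{2\kappa'}N_t^{3/(2p+2)}$ for all $0\le\sigma\le\rho\le t$, substitute into the $\com$-contribution to produce an extra factor $\int_0^r(r-\tau)^{-1-(\kappa+\kappa')/2+2\kappa'}\,d\tau$ (finite since $\kappa<3\kappa'$), and absorb the resulting $K$-contribution into the left-hand side, either by using smallness of this factor on short time slices or by iterating along successive slices of $[0,T]$ of controlled length. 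Summing $I_1$ with all eight contributions of $I_2$ then yields the stated estimate.
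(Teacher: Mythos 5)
Your decomposition $\delta_{st}w=(e^{(t-s)L_\mu}-1)w(s)+\int_s^t e^{(t-r)L_\mu}(G+cv)(r)\,dr$ and the treatment of the ``easy'' pieces $G_{(1)},\dots,G_{(5)},G_{(7)},G_{(8)}$ match the paper's line of argument (which splits $\delta_{st}'w=\sum_i\mathcal{W}_{(i)}$ and bounds each $\mathcal{W}_{(i)}$ through the heat semigroup and H\"older/Jensen on $[s,t]$). You also correctly locate the real difficulty in $G_{(6)}$, where $\com(v,w)$ feeds $\|\delta_{\tau r}w\|$ back into the right-hand side. But your proposed closure of this circularity has a gap, and the paper closes it by a different, cleaner device.

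First, the ansatz you postulate --- $\|\delta_{\sigma\rho}w\|_{L^{(2p+2)/3}}\le K(\rho-\sigma)^{2\kappa'}N_t^{3/(2p+2)}$ uniformly over $0\le\sigma\le\rho\le t$, with $N_t$ the full parenthesised quantity of the theorem --- is not self-consistent as stated, because the theorem's $N$ contains the \emph{pointwise} quantity $\|w(s)\|_{\B_{q}^{1+2\kappa'}}^q=c_s$ at the specific left endpoint. Applying it at $(\sigma,\rho)$ puts $c_\sigma$ in the bound, and there is no a priori uniform control of $\sup_{\sigma\le t}c_\sigma$ (that is precisely what Section~\ref{section:global} eventually proves; you cannot assume it here). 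This is why the paper switches to $\delta'_{st}w:=w(t)-e^{(t-s)L_\mu}w(s)$: inside the $\com$-integral it writes $\|\delta_{ur}w\|\le\|\delta'_{ur}w\|+\|(e^{(r-u)L_\mu}-1)w(u)\|$, and the second piece is $\lesssim(r-u)^{2\kappa'}c_u^{1/q}$, whose $u$-integral is then handled by H\"older and gives $C_t^{1/q}$, \emph{not} a pointwise $c_\sigma$. Your ansatz on $\delta$ rather than $\delta'$ prevents this separation.

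Second, even after passing to $\delta'$, your bootstrap would yield a linear inequality $K\lesssim 1+C(t-s)^{1-2\kappa'}K$, which closes only on short time slices and then needs an iteration (with the additional subtlety that $\delta'$ is not additive but satisfies $\delta'_{st}w=\delta'_{ut}w+e^{(t-u)L_\mu}\delta'_{su}w$). The paper avoids this entirely via an interpolation step that you don't mention and that is the crux of the argument: it writes
\[
\|\delta'_{ur}w\|_{L^q}\lesssim\|\delta'_{ur}w\|_{L^q}^{1/2}\bigl(\|w(r)\|_{L^q}^{1/2}+\|w(u)\|_{L^q}^{1/2}\bigr)\le\$w\$_{q,\kappa';t}^{1/2}(r-u)^{\kappa'}\bigl(\|w(r)\|_{L^q}^{1/2}+\|w(u)\|_{L^q}^{1/2}\bigr),
\]
with $\$w\$_{q,\kappa';t}:=\sup_{u<r\le t}\|\delta'_{ur}w\|_{L^q}/(r-u)^{2\kappa'}$. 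This produces only a half power of the unknown seminorm, yielding the implicit inequality $\$w\$_{q,\kappa';t}^q\lesssim 1+\|v_0\|_{\B_{3q}}^{3q}+A_t+C_t+\$w\$_{q,\kappa';t}^{q/2}C_t^{1/2}$, which is subcritical in $\$w\$$ and closes in one step on the whole interval via the elementary fact $x\le a+\sqrt{bx}\Rightarrow x\lesssim a+b$. No time-slicing or iteration is needed. To repair your argument you should replace the linear bootstrap by this square-root absorption, and replace the ansatz on $\delta$ by one on $\delta'$, treating the $(e^{(\rho-\sigma)L_\mu}-1)w(\sigma)$ correction separately via the $C_t$ integral.
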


As discussed in \cite[Section~4]{MW16}, since
\begin{align}\label{dw:estimate of etw-w}
\|(e^{(t-s)L_\mu}-1)w(s)\|_{L^{\frac{2p+2}{3}}}\lesssim(t-s)^{2\kappa'}c_s^{\frac{3}{2p+2}},
\end{align}
it is sufficient to consider the estimate of
$$
\delta_{st}'w:=\delta_{st}w-(e^{(t-s)L_\mu}-1)w(s)=w(t)-e^{(t-s)L_\mu}w(s).
$$
We can decompose it as
\begin{align}\label{dw:decomposition of delta'w}
\delta_{st}'w=\sum_{i=1}^8\int_s^te^{(t-r)(i+\mu)\Delta}G_{(i)}(v,w)(r)=:\sum_{i=1}^8\mathcal{W}_{(i)}(s,t).
\end{align}
For simplicity, we write
\[q=\frac{2p+2}{3}\]
in what follows.

\begin{lemm}
For every $q>\frac{4}{3}$, we have
\begin{align*}
\|\mathcal{W}_{(1)}(s,t)\|_{L^q}&\lesssim(t-s)^{\frac{q-1}{q}}(\|v_0\|_{L^{3q}}^{3q}+A_t)^{\frac{1}{q}},\\
\|\mathcal{W}_{(5)}(s,t)\|_{L^q}&\lesssim(t-s)^{\frac{q-1}{q}}(\|v_0\|_{\B_{3q}^{\frac{1}{2}+\kappa'}}^q+C_t)^{\frac{1}{q}},\\
\|\mathcal{W}_{(7)}(s,t)\|_{L^q}&\lesssim(t-s)^{\frac{q-1}{q}}C_t^{\frac{1}{q}},\\
\|\mathcal{W}_{(8)}(s,t)\|_{L^q}&\lesssim(t-s)^{\frac{3}{4}-\frac{1}{q}}(\|v_0\|_{\B_{3q}^{\frac{1}{2}+\kappa'}}^q+C_t)^{\frac{1}{q}}.
\end{align*}
\end{lemm}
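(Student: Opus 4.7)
The plan is to estimate each of the four terms $\mathcal{W}_{(i)}(s,t)$, $i\in\{1,5,7,8\}$, by (a) bounding $\|G_{(i)}(r)\|$ in an appropriate Lebesgue or Besov norm, (b) if necessary using the smoothing action of the heat semigroup $e^{(t-r)(i+\mu)\Delta}$ to convert negative-regularity estimates back into $L^q$-estimates, and (c) applying H\"older's inequality in time (together with Jensen's inequality) to extract the stated power of $(t-s)$. The $v$-contributions are then controlled via \eqref{v:estimate of v in L^p}--\eqref{v:estimate of v in Besov}, while the $w$-contributions are matched directly against $A_t$ or $C_t$.

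For $\mathcal{W}_{(1)}$, the cubic $G_{(1)}(v,w)=-\nu(v+w)^2\overline{(v+w)}$ is bounded pointwise by $|v|^3+|w|^3$; thus $\|G_{(1)}(r)\|_{L^q}\lesssim\|v(r)\|_{L^{3q}}^{3}+\|w(r)\|_{L^{3q}}^{3}$. Since $e^{t(i+\mu)\Delta}$ is uniformly bounded on $L^q$, H\"older in time with exponents $(q,q/(q-1))$ yields
\begin{align*}
\|\mathcal{W}_{(1)}(s,t)\|_{L^q}\lesssim(t-s)^{\frac{q-1}{q}}\Bigl(\int_s^t\bigl(\|v(r)\|_{L^{3q}}^{3q}+\|w(r)\|_{L^{3q}}^{3q}\bigr)dr\Bigr)^{1/q}.
\end{align*}
Since $3q=2p+2$, the $w$-integral is bounded by $A_t$, and the $v$-integral is handled exactly as in \eqref{w:G_1:estimating v in L^p}, giving $\|v_0\|_{L^{3q}}^{3q}+A_t$.

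For $\mathcal{W}_{(7)}$, Proposition \ref{para:paraproduct and resonant}(3) with $p_1=q$, $p_2=\infty$ gives $\|w\rs\CD\|_{\B_q^{2\kappa'-\kappa}}\lesssim\|w\|_{\B_q^{1+2\kappa'}}\|\CD\|_{\B_\infty^{-1-\kappa}}$; since $2\kappa'-\kappa>0$, $\B_q^{2\kappa'-\kappa}\hookrightarrow L^q$, and boundedness of the heat semigroup together with H\"older in time immediately produce $(t-s)^{(q-1)/q}C_t^{1/q}$. For $\mathcal{W}_{(5)}$, split each resonant $R(v+w,\cdot,\cdot)$ into its $v$- and $w$-pieces and apply the $R$-estimate from the cited proposition with \emph{different} integrability choices: for the $v$-piece take $p_1=p_2=p_3=3q$, which (after absorbing the driver norms) gives $\|R(v,\ICD,\CD)\|_{L^q}\lesssim\|v\|_{\B_{3q}^{1/2+\kappa'}}$; for the $w$-piece take $p_1=q$, $p_2=p_3=\infty$, which gives $\|R(w,\ICD,\CD)\|_{L^q}\lesssim\|w\|_{\B_q^{1+2\kappa'}}$. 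Boundedness of $e^{tL_\mu}$ on $L^q$ and H\"older in time then deliver $(t-s)^{(q-1)/q}$ times $\bigl(\int_s^t(\|v\|_{\B_{3q}^{1/2+\kappa'}}^q+\|w\|_{\B_q^{1+2\kappa'}}^q)dr\bigr)^{1/q}$; the $w$-integral is bounded by $C_t$, and the $v$-integral is controlled from \eqref{v:estimate of v in Besov} via Jensen, producing $\|v_0\|_{\B_{3q}^{1/2+\kappa'}}^q$ up to an absorbable term.

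For $\mathcal{W}_{(8)}$, the ``bad'' paraproduct $(v+w)\pr\CD=\CD\pl(v+w)$ has negative regularity: by Proposition \ref{para:paraproduct and resonant}(2),
\begin{align*}
\|G_{(8)}(r)\|_{\B_q^{-\frac12+\kappa'-\kappa}}\lesssim\|v(r)+w(r)\|_{\B_q^{\frac12+\kappa'}}.
\end{align*}
The heat-semigroup smoothing with $\delta=\tfrac14$ lifts this back to a positive regularity embedding into $L^q$, at cost of a factor $(t-r)^{-1/4}$. H\"older in time with exponents $(q,q/(q-1))$ then yields $(t-s)^{1/q'-1/4}=(t-s)^{3/4-1/q}$, and the remaining $q$-th moment of $\|v+w\|_{\B_q^{1/2+\kappa'}}$ is estimated as in $\mathcal{W}_{(5)}$ after noting $\|w\|_{\B_q^{1/2+\kappa'}}\le\|w\|_{\B_q^{1+2\kappa'}}$ and $\|v\|_{\B_q^{1/2+\kappa'}}\lesssim\|v\|_{\B_{3q}^{1/2+\kappa'}}$.

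The main obstacles are (i) the splitting of integrabilities in the $R$-operator bound for $\mathcal{W}_{(5)}$ (a uniform choice of $p_1$ violates either the Besov-embedding condition needed to bound $\|w\|_{\B_{3q}^{1/2+\kappa'}}$ by $\|w\|_{\B_q^{1+2\kappa'}}$, or the one needed in the reverse direction for $v$, and only the asymmetric choice above respects the hypothesis $\tfrac52\kappa'\le\tfrac34-\tfrac1q$); and (ii) tuning the heat-semigroup exponent $\delta=\tfrac14$ for $\mathcal{W}_{(8)}$, for which one needs $\tfrac14<1-\tfrac1q$, i.e.\ $q>\tfrac43$, which is precisely what the standing hypothesis on $p$ and $\kappa'$ guarantees.
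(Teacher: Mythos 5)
Your proof takes the same route as the paper: bound $\|G_{(i)}\|$ in a suitable Besov/Lebesgue norm, use heat-semigroup smoothing where $G_{(i)}$ has negative regularity, then apply H\"older in time with exponents $(q,q')$ and compare the resulting time integrals to $A_t$, $C_t$, and the $v$-estimates \eqref{v:estimate of v in L^p}--\eqref{v:estimate of v in Besov}. The $\mathcal{W}_{(1)}$, $\mathcal{W}_{(7)}$ and $\mathcal{W}_{(8)}$ arguments are correct as written.

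For $\mathcal{W}_{(5)}$, though, the "obstacle" you flag is not real, and the asymmetric choice of integrabilities is unnecessary. A uniform choice $p_1=q$, $p_2=p_3=\infty$ works for \emph{both} the $v$- and $w$-pieces: it yields $\|R(v+w,\ICD,\CD)\|_{\B_q^{\alpha-2\kappa}}\lesssim\|v+w\|_{\B_q^{\alpha}}$ for any $\alpha\in(2\kappa,1)$, and then $\|w\|_{\B_q^{\alpha}}\le\|w\|_{\B_q^{1+2\kappa'}}$ gives $C_t$, while $\int_s^t\|v\|_{\B_q^{1/2+\kappa'}}^q$ is bounded by $\|v_0\|_{\B_q^{1/2+\kappa'}}^q+C_t$ directly from \eqref{v:estimate of v in Besov} with $p=q$ and the embedding $\|w\|_{L^q}\lesssim\|w\|_{\B_q^{1+2\kappa'}}$ — no Besov embedding "in the reverse direction for $v$" is needed, since $\|v_0\|_{\B_q^{1/2+\kappa'}}\le\|v_0\|_{\B_{3q}^{1/2+\kappa'}}$ makes the result compatible with (indeed slightly stronger than) the stated bound. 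This is precisely what the paper does for $\mathcal{W}_{(8)}$ ("the time integral of $\|v\|_{\B_q^{1/2+\kappa'}}$\ldots"), and $\mathcal{W}_{(5)}$ is handled the same way. Separately, a small technical slip: you cannot apply the $R$-estimate of Proposition~\ref{para:paraproduct and resonant} (the trilinear commutator version, Proposition A.9 of \cite{MW16}) directly with first-slot regularity $1+2\kappa'$, since that proposition requires $\alpha<1$; you must take some $\alpha<1$ and then use the monotonicity $\|w\|_{\B_q^\alpha}\le\|w\|_{\B_q^{1+2\kappa'}}$ afterward. Both points are easily repaired and do not affect the conclusion.
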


\begin{proof}
These are obtained by similar arguments to \cite[Lemmas~4.2 and 4.6]{MW16}. Here we prove only the last two assertions. For $\mathcal{W}_{(7)}$, we have
\begin{align*}
\|\mathcal{W}_{(7)}(s,t)\|_{L^q}
&\lesssim\int_s^t\|e^{(t-r)L_\mu}G_{(7)}(r)\|_{L^q}dr\lesssim\int_s^t\|G_{(7)}(r)\|_{L^q}dr\\
&\lesssim\left(\int_s^tdr\right)^{\frac{q-1}{q}}\left(\int_0^t\|w(r)\|_{\B_q^{1+2\kappa'}}^qdr\right)^{\frac{1}{q}}
\lesssim(t-s)^{\frac{q-1}{q}}C_t^{\frac{1}{q}}.
\end{align*}
For $\mathcal{W}_{(8)}$,
\begin{align*}
\|\mathcal{W}_{(8)}(s,t)\|_{L^q}
&\lesssim\int_s^t\|e^{(t-r)L_\mu}G_{(8)}(r)\|_{\B_q^{\kappa'-\kappa}}dr\lesssim\int_s^t(t-r)^{-\frac{1}{4}}\|G_{(8)}(r)\|_{\B_q^{-\frac{1}{2}+\kappa'-\kappa}}dr\\
&\lesssim\left(\int_s^t(t-r)^{-\frac{1}{4}\frac{q}{q-1}}dr\right)^{\frac{q-1}{q}}\left\{\int_0^t(\|v(r)\|_{\B_q^{\frac{1}{2}+\kappa'}}+\|w(r)\|_{\B_q^{\frac{1}{2}+\kappa'}})^qdr\right\}^{\frac{1}{q}}.
\end{align*}
The first factor is bounded by $(t-s)^{\frac{3}{4}-\frac{1}{q}}$ because $q>\frac{4}{3}$. We can show that the time integral of $\|v\|_{\B_q^{\frac{1}{2}+\kappa'}}$ is bounded by
\[\|v_0\|_{\B_q^{\frac{1}{2}+\kappa'}}^q+C_t,\]
as already discussed above.
\end{proof}

\begin{lemm}\label{dw:estimate of W_2-4}
For every $q>\frac{4}{3}$ such that $\frac{1}{q}<\frac{3}{4}-\frac{1}{2}\kappa'$, we have
\begin{align*}
\|\mathcal{W}_{(2)}(s,t)\|_{L^q}&\lesssim(t-s)^{\frac{3}{4}-\frac{1}{q}-\frac{1}{2}\kappa'}(\|v_0\|_{\B_{3q}^{\frac{1}{2}+\kappa'}}^{3q}+A_t+C_t)^{\frac{1}{q}},\\
\|\mathcal{W}_{(3)}(s,t)\|_{L^q}&\lesssim(t-s)^{\frac{3}{4}-\frac{1}{q}-\frac{1}{2}\kappa'}(\|v_0\|_{\B_{3q}^{\frac{1}{2}+\kappa'}}^q+C_t)^{\frac{1}{q}},\\
\|\mathcal{W}_{(4)}(s,t)\|_{L^q}&\lesssim(t-s)^{\frac{3}{4}-\kappa'}.
\end{align*}
\end{lemm}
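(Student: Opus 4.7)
The plan is to handle the three parts by a common dispersive-plus-H\"older reduction, and then to deal with the nonlinearity of $G_{(2)}$ via an interpolation estimate for $w$. For each $j\in\{2,3,4\}$ I would first write
\[
\|e^{(t-r)L_\mu}G_{(j)}(r)\|_{L^q}\lesssim\|e^{(t-r)L_\mu}G_{(j)}(r)\|_{\B_q^{\kappa'-\kappa}}\lesssim(t-r)^{-\frac14-\frac{\kappa'}{2}}\|G_{(j)}(r)\|_{\B_q^{-\frac12-\kappa}},
\]
using $L^q\hookrightarrow\B_q^{\kappa'-\kappa}$ from Proposition~\ref{para:estimates of Besov norm} and the heat-semigroup smoothing, and then apply H\"older's inequality in $r$ with dual exponents $q/(q-1)$ and $q$. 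The assumption $\frac1q<\frac34-\frac{\kappa'}{2}$ ensures integrability of the kernel and yields the factor $(t-s)^{3/4-1/q-\kappa'/2}$ outside, reducing everything to estimating $\bigl(\int_0^t\|G_{(j)}\|_{\B_q^{-1/2-\kappa}}^q\,dr\bigr)^{1/q}$.

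For $\mathcal{W}_{(4)}$, the integrand $G_{(4)}$ contains no $(v,w)$ and each a priori ill-defined product is replaced by a given stochastic element of regularity at least $-\frac12-\kappa$, so $\|G_{(4)}(r)\|_{\B_\infty^{-1/2-\kappa}}\lesssim\$\mathbb{X}\$_{\kappa,T}$ uniformly in $r$; a slightly tighter choice of smoothing exponent then recovers the announced $(t-s)^{3/4-\kappa'}$. For $\mathcal{W}_{(3)}$, which is linear in $(v,w)$ paired against stochastic coefficients of regularity $\ge-\frac12-\kappa$, I would apply Bony's decomposition to each such product: Proposition~\ref{para:paraproduct and resonant}(1) bounds the low-frequency piece $(v+w)\pl[\cdot]$ by $\|v+w\|_{L^q}\,\|[\cdot]\|_{\B_\infty^{-1/2-\kappa}}$, while (2)--(3) handle the resonant and the opposite paraproduct via $\|v\|_{\B_{3q}^{1/2+\kappa'}}$ and $\|w\|_{\B_q^{1+2\kappa'}}$. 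Using the Besov embeddings $\|v\|_{L^q}\lesssim\|v\|_{\B_{3q}^{1/2+\kappa'}}$ and $\|w\|_{L^q}\lesssim\|w\|_{\B_q^{1+2\kappa'}}$ then absorbs the low-regularity piece, so
\[
\|G_{(3)}\|_{\B_q^{-\frac12-\kappa}}\lesssim\|v\|_{\B_{3q}^{\frac12+\kappa'}}+\|w\|_{\B_q^{1+2\kappa'}}.
\]
The $w$-term integrates to $C_t$ and the $v$-term is controlled from the mild formulation of $v$ by the same Jensen/kernel-swap argument that appears inside the proof of $\mathcal{I}_{(8)}$ in Section~\ref{section:apriori_w}, delivering $\|v_0\|_{\B_{3q}^{\frac12+\kappa'}}^q+C_t$.

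The serious step is $\mathcal{W}_{(2)}$, because $G_{(2)}=P_2(v+w)$ is quadratic in $(v+w)$ and paired against $\JY$ and $\IX$ of regularity $-\frac12-\kappa$ (the $\JDDC,\ICCD$ contributions are functions of positive regularity and are harmless). The naive multiplicative bound $\|(v+w)^2\|_{\B_{3q/2}^{\frac12+\kappa'}}\lesssim\|v+w\|_{L^{3q}}\|v+w\|_{\B_{3q}^{\frac12+\kappa'}}$ would require $\|w\|_{\B_{3q}^{\frac12+\kappa'}}$, which is out of reach when $p$ is close to $\frac32$ because no Besov embedding sends $\B_q^{1+2\kappa'}$ into $\B_{3q}^{\frac12+\kappa'}$ in that range. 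The saving step is the interpolation
\[
\|w\|_{\B_{3q/2}^{\frac12+\kappa'}}\lesssim\|w\|_{\B_q^{1+2\kappa'}}^{1/2}\|w\|_{L^{3q}}^{1/2},
\]
an instance of Proposition~\ref{para:interpolation}. Combined with the multiplication and paraproduct estimates this yields
\begin{align*}
\|G_{(2)}\|_{\B_q^{-\frac12-\kappa}}\lesssim{}&\|v\|_{L^{3q}}\|v\|_{\B_{3q}^{\frac12+\kappa'}}+\|v\|_{L^{3q}}\|w\|_{\B_q^{1+2\kappa'}}^{1/2}\|w\|_{L^{3q}}^{1/2}\\
&{}+\|w\|_{L^{3q}}\|v\|_{\B_{3q}^{\frac12+\kappa'}}+\|w\|_{L^{3q}}^{3/2}\|w\|_{\B_q^{1+2\kappa'}}^{1/2}.
\end{align*}
Raising to the $q$-th power and applying Young's inequality summand by summand (for the triple-product term with exponents $(3,2,6)$, since $\tfrac13+\tfrac12+\tfrac16=1$) decouples each piece into integrable powers of $\|v\|_{\B_{3q}^{\frac12+\kappa'}}^{3q}$, $\|w\|_{L^{3q}}^{3q}=\|w\|_{L^{2p+2}}^{2p+2}$ and $\|w\|_{\B_q^{1+2\kappa'}}^q$. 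The first integrates to $\|v_0\|_{\B_{3q}^{\frac12+\kappa'}}^{3q}+A_t$ by the $v$-iteration already used in the proof of $\mathcal{I}_{(8)}$, the second equals $A_t$ by definition, the third equals $C_t$, and summing produces the stated bound.
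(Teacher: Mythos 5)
Your proposal is correct and follows essentially the same route as the paper: reduce to $(t-s)^{3/4-1/q-\kappa'/2}(\int_s^t\|G_{(j)}\|_{\B_q^{-1/2-\kappa}}^q)^{1/q}$ via heat-semigroup smoothing and H\"older in time, and for $G_{(2)}$ use Bony's decomposition of $(v+w)^2$ against the $\B_\infty^{-1/2-\kappa}$ drivers $\IX,\JY$ together with the interpolation $\|w\|_{\B_{3q/2}^{1/2+\kappa'}}\lesssim\|w\|_{\B_q^{1+2\kappa'}}^{1/2}\|w\|_{L^{3q}}^{1/2}$ to avoid $\|w\|_{\B_{3q}^{1/2+\kappa'}}$ — which is exactly the paper's estimate (6.1) with its case split $v^2/vw/w^2$. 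The only slip is cosmetic: the worst term of $G_{(4)}$ (e.g.\ $\ICCD\pr\CD$) lives in $\B_\infty^{-1/2-2\kappa}$, not $\B_\infty^{-1/2-\kappa}$, but since $\kappa<\kappa'$ and constants are allowed to depend on $T$, your hedge about adjusting the smoothing exponent still lands on $(t-s)^{3/4-\kappa'}$.
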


\begin{proof}
We now focus on the first one. The others are obtained by similar arguments. We start with the estimate
\begin{align*}
\|\mathcal{W}_{(2)}(s,t)\|_{L^q}
&\lesssim\int_s^t\|e^{(t-r)L_\mu}G_{(2)}(r)\|_{\B_q^{\kappa'-\kappa}}dr\lesssim\int_s^t(t-r)^{-\frac{1+2\kappa'}{4}}\|G_{(2)}(r)\|_{\B_q^{-\frac{1}{2}-\kappa}}dr\\
&\lesssim\left(\int_s^t(t-r)^{-\frac{1+2\kappa'}{4}\frac{q}{q-1}}dr\right)^{\frac{q-1}{q}}\left(\int_0^t\|G_{(2)}(r)\|_{\B_q^{-\frac{1}{2}-\kappa}}^qdr\right)^{\frac{1}{q}}.
\end{align*}
We will show the bound
\begin{align}\label{w:W_2:estimate of G_2}
\|G_{(2)}(r)\|_{\B_q^{-\frac{1}{2}-\kappa}}^q
&\lesssim\|(v+w)^2(r)\|_{\B_q^{\frac{1}{2}+\kappa'}}^q+\|(v+w)(\conj{v}+\conj{w})(r)\|_{\B_q^{\frac{1}{2}+\kappa'}}^q\\\notag
&\lesssim\|v(r)\|_{\B_{3q}^{\frac{1}{2}+\kappa'}}^{3q}+a_r+c_r
\end{align}
by estimating the terms involving (1) $v^2,v\conj{v}$, (2) $vw,v\conj{w},\conj{v}w$ and (3) $w^2,w\conj{w}$ separately. For (1), we have
$$
\|v^2\|_{\B_q^{\frac{1}{2}+\kappa'}}+\|v\conj{v}\|_{\B_q^{\frac{1}{2}+\kappa'}}\lesssim\|v\|_{\B_{2q}^{\frac{1}{2}+\kappa'}}^2\lesssim1+\|v\|_{\B_{3q}^{\frac{1}{2}+\kappa'}}^3.
$$
For (2), by Young's inequality and the interpolation (Lemma \ref{para:interpolation}) we have
\begin{align*}
\|vw\|_{\B_q^{\frac{1}{2}+\kappa'}}+\|v\conj{w}\|_{\B_q^{\frac{1}{2}+\kappa'}}
&\lesssim\|v\|_{\B_{3q}^{\frac{1}{2}+\kappa'}}\|w\|_{\B_{\frac{3q}{2}}^{\frac{1}{2}+\kappa'}}
\lesssim\|v\|_{\B_{3q}^{\frac{1}{2}+\kappa'}}\|w\|_{L^{3q}}^{\frac{1}{2}}\|w\|_{\B_q^{1+2\kappa'}}^{\frac{1}{2}}\\
&\lesssim\|v\|_{\B_{3q}^{\frac{1}{2}+\kappa'}}^3+\|w\|_{L^{3q}}^3+\|w\|_{\B_q^{1+2\kappa'}}.
\end{align*}
For (3), by Bony's decomposition
$$
w^2=w\rs w+2w\pl w
$$
we have
\begin{align*}
\|w^2\|_{\B_q^{\frac{1}{2}+\kappa'}}
&\lesssim\|w\pl w\|_{\B_q^{\frac{1}{2}+\kappa'}}+\|w\rs w\|_{\B_q^{\frac{1}{2}+\kappa'}}
\lesssim\|w\|_{L^{3q}}\|w\|_{\B_{\frac{3q}{2}}^{\frac{1}{2}+\kappa'}}+\|w\|_{\B_{2q}^{\frac{1}{4}+\frac{1}{2}\kappa'}}^2\\
&\lesssim\|w\|_{L^{3q}}(\|w\|_{L^{3q}}^{\frac{1}{2}}\|w\|_{\B_q^{1+2\kappa'}}^{\frac{1}{2}})+(\|w\|_{L^{3q}}^{\frac{3}{4}}\|w\|_{\B_q^{1+2\kappa'}}^{\frac{1}{4}})^2\\
&\lesssim\|w\|_{L^{3q}}^3+\|w\|_{\B_q^{1+2\kappa'}}.
\end{align*}
Now we get the required bounds because
\begin{align*}
\int_0^t\|v(r)\|_{\B_{3q}^{\frac{1}{2}+\kappa'}}^{3q}dr
\lesssim\|v_0\|_{\B_{3q}^{\frac{1}{2}+\kappa'}}^{3q}+A_t
\end{align*}
by \eqref{v:estimate of v in Besov}. These complete the proof.
\end{proof}

\begin{lemm}
For every $q>\frac{4}{3}$ such that $\frac{1}{q}<1-\kappa'$, we have
\begin{align*}
\|\mathcal{W}_{(6)}(s,t)\|_{L^q}\lesssim(t-s)^{1-\frac{1}{q}-\kappa'}(\|v_0\|_{\B_q^{\frac{1}{2}+\kappa'}}^q+C_t+\$w\$_{q,\kappa';t}^{\frac{q}{2}}C_t^{\frac{1}{2}})^{\frac{1}{q}},
\end{align*}
where
\[\$w\$_{q,\kappa';t}:=\sup_{0\le u<r\le t}\frac{\|\delta_{ur}'w\|_{L^q}}{|r-u|^{2\kappa'}}.\]
\end{lemm}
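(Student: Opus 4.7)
Reduce $\|\mathcal{W}_{(6)}(s,t)\|_{L^q}$ to an integral of $\|\com(v,w)(r)\|_{\B_q^{1+\kappa'}}^q$, insert the bound \eqref{v:estimate of com in Besov}, and handle the new $\|\delta_{s'r}w\|_{L^q}$ contribution by taking the geometric mean of two competing estimates so as to halve the exponent on $\$w\$_{q,\kappa';t}$. For the reduction, Proposition \ref{para:paraproduct and resonant} applied with $\CD,\CC\in\B_\infty^{-1-\kappa}$ gives
$$\|G_{(6)}(r)\|_{\B_q^{\kappa'-\kappa}}\lesssim\|\com(v,w)(r)\|_{\B_q^{1+\kappa'}},$$
and composing the chain $\|\cdot\|_{L^q}\lesssim\|\cdot\|_{\B_{q,1}^0}\lesssim\|\cdot\|_{\B_q^{\kappa'-\kappa}}$ with the heat smoothing (gain of $2\kappa'$ regularity at cost $(t-r)^{-\kappa'}$) yields $\|e^{(t-r)L_\mu}G_{(6)}(r)\|_{L^q}\lesssim(t-r)^{-\kappa'}\|\com(v,w)(r)\|_{\B_q^{1+\kappa'}}$. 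H\"older in $r\in[s,t]$ with dual exponent $q/(q-1)$ then produces the prefactor $(t-s)^{1-\frac{1}{q}-\kappa'}$, integrability of the singular kernel requiring precisely $\frac{1}{q}<1-\kappa'$, reducing the problem to estimating $\int_0^t\|\com(v,w)(r)\|_{\B_q^{1+\kappa'}}^q dr$.

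\textbf{Bounding the commutator integral.} Inserting \eqref{v:estimate of com in Besov} with $p=q$, I split into six contributions. The five not involving $\|\delta_{s'r}w\|_{L^q}$ are handled by the same manipulations used in the proof of Lemma \ref{w:lemm:estimate of G_6}: exchange of the order of integration, Jensen's inequality on the inner convolutions, and the bound $\int_0^t\|v(r)\|_{\B_q^{1/2+\kappa'}}^q dr\lesssim\|v_0\|_{\B_q^{1/2+\kappa'}}^q+C_t$ coming from \eqref{v:estimate of v in Besov}. After raising to the $q$-th power and integrating in $r$, they jointly contribute $\|v_0\|_{\B_q^{1/2+\kappa'}}^q+C_t$. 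The last term, $\int_0^r(r-s')^{-1-\frac{\kappa+\kappa'}{2}}\|\delta_{s'r}w\|_{L^q}ds'$, is where the seminorm $\$w\$_{q,\kappa';t}$ must enter.

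\textbf{The main obstacle: producing $\$w\$_{q,\kappa';t}^{q/2}C_t^{1/2}$.} Two bounds on $\|\delta_{s'r}w\|_{L^q}$ are available. Splitting $\delta_{s'r}w=\delta'_{s'r}w+(e^{(r-s')L_\mu}-1)w(s')$ and invoking \eqref{dw:estimate of etw-w} yields
$$\|\delta_{s'r}w\|_{L^q}\lesssim(r-s')^{2\kappa'}\bigl(\$w\$_{q,\kappa';t}+c_{s'}^{1/q}\bigr),$$
while the triangle inequality together with the embedding $\|w(\cdot)\|_{L^q}\lesssim\|w(\cdot)\|_{\B_q^{1+2\kappa'}}$ gives $\|\delta_{s'r}w\|_{L^q}\lesssim c_r^{1/q}+c_{s'}^{1/q}$. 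The crucial move is the geometric mean
$$\|\delta_{s'r}w\|_{L^q}\lesssim(r-s')^{\kappa'}\bigl(\$w\$_{q,\kappa';t}^{1/2}+c_{s'}^{1/(2q)}\bigr)\bigl(c_r^{1/(2q)}+c_{s'}^{1/(2q)}\bigr),$$
which converts the effective kernel exponent to $(r-s')^{-1+(\kappa'-\kappa)/2}$, locally integrable thanks to $\kappa<\kappa'$. Distributing the product, raising to the $q$-th power, and integrating in $r$, the cross-term $\$w\$_{q,\kappa';t}^{q/2}c_r^{1/2}$ integrates via Cauchy-Schwarz $\int_0^t c_r^{1/2}dr\le t^{1/2}C_t^{1/2}$ to give precisely the desired $\$w\$_{q,\kappa';t}^{q/2}C_t^{1/2}$, while the remaining cross-terms and the pure $c_{s'}^{1/q}$ term collapse to $C_t$ via Jensen's inequality on the inner convolutions. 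The hard point is arranging the splitting so as to produce the multiplicative structure $\$w\$_{q,\kappa';t}^{q/2}C_t^{1/2}$ rather than the additive $\$w\$_{q,\kappa';t}^q+C_t$ that a direct bound would yield: halving the power on $\$w\$_{q,\kappa';t}$ is exactly what allows the self-referential inequality arising in Theorem \ref{dw:goal} to be solved for $\$w\$_{q,\kappa';t}$ in the subsequent bootstrap.
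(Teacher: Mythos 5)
Your proposal is correct and takes essentially the same approach as the paper: reduce $\|\mathcal{W}_{(6)}\|_{L^q}$ to an integral of the commutator norm, estimate that via \eqref{v:estimate of com in Besov}, and for the $\|\delta w\|$ contribution use the geometric mean of the H\"older-continuity bound with the $L^q$-boundedness bound so that the seminorm $\$w\$_{q,\kappa';t}$ appears only to the power $q/2$. The only cosmetic differences are that the paper first isolates $\delta'_{ur}w$ (discarding $(e^{(r-u)L_\mu}-1)w(u)$ via Jensen) before taking the geometric mean, and applies H\"older term by term rather than raising $\|\com(v,w)\|_{\B_q^{1+\kappa'}}$ to the $q$-th power globally; your version also carries a superfluous $(t-r)^{-\kappa'}$ singularity (the embedding $\B_q^{\kappa'-\kappa}\hookrightarrow\B_{q,1}^0\hookrightarrow L^q$ already makes the kernel nonsingular), which is harmless since $T$ is fixed.
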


\begin{proof}
Since
\begin{align*}
\|\mathcal{W}_{(6)}(s,t)\|_{L^q}\lesssim\int_s^t\|G_{(6)}(r)\|_{\B_q^{\kappa'-\kappa}}dr\lesssim\int_s^t\|\com(v,w)(r)\|_{\B_q^{1+\kappa'}}dr,
\end{align*}
we consider the time integral of each term in \eqref{v:estimate of com in Besov}. For the first two terms, we have
\[\int_s^t(1+r^{-\frac{1}{4}}\|v_0\|_{\B_q^{\frac{1}{2}+\kappa'}})dr\lesssim(t-s)^{\frac{3}{4}}(1+\|v_0\|_{\B_q^{\frac{1}{2}+\kappa'}}).\]
For the next three terms, since $\kappa'\frac{q}{q-1}<4\kappa'<1$ we have
\begin{align*}
&\int_s^tr^{-\kappa'}(1+\|w(r)\|_{L^q})dr+\int_s^tr^{-\kappa'}\int_0^r(r-u)^{-\frac{1+\kappa'}{2}}(1+\|w(u)\|_{L^q})du\,dr\\
&\le\left(\int_s^tr^{-\kappa'\frac{q}{q-1}}dr\right)^{\frac{q-1}{q}}\left[\left(\int_0^tc_rdr\right)^{\frac{1}{q}}+\left(\int_0^t\int_0^r(r-u)^{-\frac{1+\kappa'}{2}}c_udu\,dr\right)^{\frac{1}{q}}\right]\\
&\lesssim(t-s)^{1-\frac{1}{q}-\kappa'}C_t^{\frac{1}{q}},
\end{align*}
and
\begin{align*}
&\int_s^t\int_0^r(r-u)^{-\frac{1+3\kappa'}{2}}\|w(u)\|_{\B_q^{1+2\kappa'}}du\,dr\\
&\lesssim\left(\int_s^tdr\right)^{\frac{q-1}{q}}\left(\int_0^t\int_0^r(r-u)^{-\frac{1+3\kappa'}{2}}c_udu\,dr\right)^{\frac{1}{q}}\\
&\lesssim(t-s)^{\frac{q-1}{q}}C_t^{\frac{1}{q}}.
\end{align*}
For the last term, we can replace $\delta_{st}w$ by $\delta_{st}'w$ since the difference is estimated by
\begin{align*}
&\int_s^t\int_0^r(r-u)^{-1-\frac{\kappa+\kappa'}{2}}|\|\delta_{ur}w\|_{L^q}-\|\delta_{ur}'w\|_{L^q}|du\,dr\\
&\lesssim\int_s^t\int_0^r(r-u)^{-1-\kappa'}(r-u)^{2\kappa'}c_u^{\frac{1}{q}}du\,dr\\
&\lesssim\left(\int_s^tdr\right)^{\frac{q-1}{q}}\left(\int_s^t\int_0^r(r-u)^{-1+\kappa'}c_udu\,dr\right)^{\frac{1}{q}}\lesssim(t-s)^{\frac{q-1}{q}}C_t^{\frac{1}{q}}.
\end{align*}
For the contribution of $\delta_{st}'w$, since
\begin{align*}
\|\delta_{ur}'w\|_{L^q}&\lesssim\|\delta_{ur}'w\|_{L^q}^{\frac{1}{2}}(\|w(r)\|_{L^q}^{\frac{1}{2}}+\|w(u)\|_{L^q}^{\frac{1}{2}})\\
&\le\$w\$_{q,\kappa';t}^{\frac{1}{2}}(r-u)^{\kappa'}(\|w(r)\|_{L^q}^{\frac{1}{2}}+\|w(u)\|_{L^q}^{\frac{1}{2}}),
\end{align*}
we have
\begin{align*}
&\int_s^t\int_0^r(r-u)^{-1-\frac{\kappa+\kappa'}{2}}\|\delta_{ur}'w\|_{L^q}du\,dr\\
&\lesssim\$w\$_{q,\kappa';t}^{\frac{1}{2}}\int_s^t\int_0^r(r-u)^{-1+\frac{\kappa'-\kappa}{2}}(\|w(r)\|_{L^q}^{\frac{1}{2}}+\|w(u)\|_{L^q}^{\frac{1}{2}})du\,dr\\
&\lesssim\$w\$_{q,\kappa';t}^{\frac{1}{2}}\left(\int_s^tdr\right)^{\frac{2q-1}{2q}}\left(\int_0^t\int_0^r(r-u)^{-1+\frac{\kappa'-\kappa}{2}}(c_r+c_u)du\,dr\right)^{\frac{1}{2q}}\\
&\lesssim\$w\$_{q,\kappa';t}^{\frac{1}{2}}(t-s)^{\frac{2q-1}{2q}}C_t^{\frac{1}{2q}}.
\end{align*}
\end{proof}

Combining these estimates, we obtain the required result.

\begin{proof}[{Proof of Theorem \ref{dw:goal}}]
By assumption of $\kappa'$, all of the exponents of $(t-s)$ appeared in the above estimates are greater than $2\kappa'$. To sum them up, we have
\begin{align*}
\|\delta_{st}'w\|_{L^q}\lesssim(t-s)^{2\kappa'}(1+\|v_0\|_{\B_{3q}^{\frac{1}{2}+\kappa'}}^{3q}+A_t+C_t+\$w\$_{q,\kappa';t}^{\frac{q}{2}}C_t^{\frac{1}{2}})^{\frac{1}{q}},
\end{align*}
which yields
\begin{align*}
\$w\$_{q,\kappa';t}^q\lesssim
1+\|v_0\|_{\B_{3q}^{\frac{1}{2}+\kappa'}}^{3q}+A_t+C_t+\$w\$_{q,\kappa';t}^{\frac{q}{2}}C_t^{\frac{1}{2}}.
\end{align*}
From the fact that $x\le a+\sqrt{bx}\Rightarrow x\lesssim a+b$, we have
\[\$w\$_{q,\kappa';t}\lesssim
(1+\|v_0\|_{\B_{3q}^{\frac{1}{2}+\kappa'}}^{3q}+A_t+C_t)^{\frac{1}{q}},\]
which implies Theorem \ref{dw:goal}.
\end{proof}

\section{A priori $L^1[0,T]$ estimate of $(v,w)$}\label{section:integrable}

The goal of this section is the following theorem. From now on, we always assume
$$
1<p<5\wedge\{1+\mu(\mu+\sqrt{1+\mu^2})\}.
$$

\begin{theo}\label{int:apriori estimate on L^1}
Assume that $\kappa'<\frac{2}{5}(\frac{3}{4}-\frac{1}{q})\wedge\frac{1}{18}$. Let $(v,w)$ be the solution of the system \eqref{solution:CGLsystem} with initial value $(v_0,w_0)\in\B_{3q}^{\frac12+\kappa'}\times\B_q^{\frac{3}{2}-2\kappa'}$. Then there exists a constant $C<\infty$ depending only on $\mu,\nu,\lambda,c,\kappa,\kappa',p,T,\$\mathbb{X}\$_{\kappa,T}$ and $\|v_0\|_{\B_{3q}^{\frac12+\kappa'}}+\|w_0\|_{\B_q^{\frac{3}{2}-\kappa'}}$ such that
\[\int_0^T(\|w(t)\|_{\B_{q}^{1+2\kappa'}}^{q}+\|v(t)\|_{\B_{3q}^{\frac{1}{2}+\kappa'}}^{3q}+\|w(t)\|_{L^{3q}}^{3q})dt\le C.\]
\end{theo}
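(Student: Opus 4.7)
The plan is to close a system of two integral inequalities. The first is Theorem \ref{w:control L^p by Besov} applied at the exponent $p$, which, writing $q=(2p+2)/3$ and $3q=2p+2$, bounds $\|w(t)\|_{L^{2p}}^{2p}+\int_0^t\|w(s)\|_{L^{3q}}^{3q}ds$ by initial data plus $C_t:=\int_0^t\|w(s)\|_{\B_q^{1+2\kappa'}}^q ds$. The goal is to produce a matching upper bound for $C_t$ (and the $\B_{3q}^{1/2+\kappa'}$-norm of $v$) in terms of $A_t:=\int_0^t(1+\|w(s)\|_{L^{3q}}^{3q})ds$ plus initial data, plus small (absorbable) multiples of $C_t$ itself. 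Once both inequalities are in hand, a standard absorption gives both integrals bounded, and then the $v$-estimate follows from \eqref{v:estimate of v in Besov}.

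The hard part, and the one I would tackle first, is estimating $\int_0^T\|w(t)\|_{\B_q^{1+2\kappa'}}^q dt$. I would start from the mild formulation
\[
w(t)=e^{tL_\mu}w_0+\sum_{i=1}^{8}\int_0^t e^{(t-s)L_\mu}G_{(i)}(v,w)(s)\,ds+c\int_0^t e^{(t-s)L_\mu}v(s)\,ds,
\]
apply the regularizing bound $\|e^{(t-s)L_\mu}f\|_{\B_q^{1+2\kappa'}}\lesssim (t-s)^{-\alpha/2}\|f\|_{\B_q^{1+2\kappa'-\alpha}}$ with an exponent $\alpha$ tuned to each $G_{(i)}$, then take the $q$-th power, integrate in $t$, and use Minkowski/Jensen to exchange orders of integration. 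Each term $G_{(i)}$ must be estimated in an appropriate negative Besov space: for $G_{(1)}$ (the renormalized cubic) and $G_{(2)}$ I bound the product terms $v^a w^b$ (with $a+b=3$) via Bony's decomposition, H\"older, and the interpolation inequality $\|w\|_{\B_{\frac{3q}{2}}^{1/2+\kappa'}}\lesssim \|w\|_{L^{3q}}^{1/2}\|w\|_{\B_q^{1+2\kappa'}}^{1/2}$ used in Lemma \ref{dw:estimate of W_2-4}, yielding pointwise bounds by $\|v\|_{\B_{3q}^{1/2+\kappa'}}^{3q}+a_s+c_s$. Terms $G_{(3)}$, $G_{(4)}$, $G_{(5)}$ are linear or constant in $(v,w)$ and are handled directly using the driving-vector norms and already-established Besov estimates on $v$. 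For the resonance term $G_{(7)}=-\nu(2w\rs\CD+\bar w\rs\CC)$ one pays $\|\CD\|_{\B_\infty^{-1-\kappa}}\|w\|_{\B_q^{1+2\kappa'}}$, which lands the contribution naturally in the integrand $c_s$. For the paraproduct $G_{(8)}$ one uses $\|u\pr\CD\|_{\B_q^{-1-\kappa}}\lesssim\|u\|_{\B_q^{\kappa'-\kappa}}\|\CD\|_{\B_\infty^{-1-\kappa}}$ together with \eqref{v:estimate of v in L^p}-\eqref{v:estimate of v in Besov}, so the contribution is $(\|v_0\|_{\B_{3q}^{1/2+\kappa'}}^{3q}+A_t)$ after time integration.

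The delicate contribution is $G_{(6)}=-\nu(2\,\com(v,w)\rs\CD+\overline{\com(v,w)}\rs\CC)$, since $\com(v,w)$ is controlled only by the a priori estimate of Lemma \ref{local:estimate of com}; but Lemma \ref{w:lemm:estimate of G_6} together with Theorem \ref{dw:goal} (whose $\|\delta_{st}w\|_{L^q}$ bound is the technical heart of the matter) already packages $\int_0^T\|\com(v,w)\|_{\B_q^{1+\kappa'}}^{q}\,dt$ in terms of $\|v_0\|_{\B_{3q}^{1/2+\kappa'}}^{3q}$, $A_t$, and $C_t$ with an arbitrarily small prefactor on $C_t$. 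The constraint $\kappa'<\frac{2}{5}(\frac{3}{4}-\frac{1}{q})$ is there precisely to ensure that all the singular time-kernels $(t-s)^{-\beta}$ appearing along the way are integrable with the correct exponents; I would verify this parameter budget as I go.

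With these estimates, summing over $i$ yields
\[
\int_0^T\|w(t)\|_{\B_q^{1+2\kappa'}}^{q}\,dt\le C_\epsilon\bigl(1+\|v_0\|_{\B_{3q}^{1/2+\kappa'}}^{3q}+\|w_0\|_{\B_q^{3/2-2\kappa'}}^{q}+A_T\bigr)+\epsilon\bigl(A_T+C_T\bigr),
\]
for $\epsilon>0$ arbitrarily small. Substituting the bound $A_T\lesssim 1+\|v_0\|_{\B_{3q}^{1/2+\kappa'}}^{3q}+\|w_0\|_{L^{2p}}^{2p}+C_T$ from Theorem \ref{w:control L^p by Besov} and choosing $\epsilon$ small enough to absorb $\epsilon C_T$ into the left-hand side closes the bound on $C_T$ (and hence $A_T$). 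Finally I would insert these into \eqref{v:estimate of v in Besov} and integrate in time using Jensen's inequality to estimate $\int_0^T\|v(t)\|_{\B_{3q}^{1/2+\kappa'}}^{3q}dt$, completing the proof. The main obstacle is the bookkeeping in step one: verifying that every $G_{(i)}$ can be estimated so that its contribution to $\int_0^T\|w\|_{\B_q^{1+2\kappa'}}^q dt$ is absorbable, especially the commutator term where the dependence on $\$w\$_{q,\kappa';t}$ must be unwound via Theorem \ref{dw:goal}.
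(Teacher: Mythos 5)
Your strategy diverges from the paper's in a way that introduces a genuine gap at the very last step. The paper does not try to close an integral inequality for $C_T=\int_0^T\|w\|_{\B_q^{1+2\kappa'}}^q\,dt$ directly on the whole interval $[0,T]$ with an $\epsilon$-absorption; it instead localizes in time (Lemma \ref{int:control c by I in short time}) and then propagates the short-time bound across consecutive intervals via an averaging trick. This localization is not an optional convenience — it is forced by the structure of the estimates.

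Here is the concrete problem with your claimed inequality
\[
C_T\le C_\epsilon\bigl(1+\text{data}+A_T\bigr)+\epsilon\bigl(A_T+C_T\bigr).
\]
The only source of smallness available when you bound $\int_0^T\|\mathcal{W}_{(i)}(t)\|_{\B_q^{1+2\kappa'}}^q\,dt$ via Young's convolution inequality is the factor $\bigl(\int_0^T(T-t)^{-\beta_i r_i}dt\bigr)^{q/r_i}$, which is $O(T^\theta)$ for some $\theta>0$, not $O(\epsilon)$. Look in particular at $G_{(7)}=-\nu(2w\rs\CD+\conj{w}\rs\CC)$: here $\|G_{(7)}(s)\|_{L^q}^q\lesssim c_s$ exactly, with no spare power of $\|w\|_{L^{3q}}$ or $\|v\|_{\B_{3q}^{1/2+\kappa'}}$ to trade off against $c_s$ by Young's numerical inequality. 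Its contribution to $C_T$ is therefore of the form $T^\theta C_T$, and there is no way to replace $T^\theta$ by an arbitrarily small $\epsilon$ once $T$ is of order one or larger. Substituting $A_T\lesssim 1+\text{data}+C_T$ from Theorem \ref{w:control L^p by Besov} does not help — you end up with $C_T\lesssim (1+\text{data})+(\text{const}+T^\theta)C_T$, which cannot be closed.

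The paper's fix is essential. On intervals $[s,t]$ with $t-s\le 2T_*$, the prefactor becomes $O(T_*^\theta)$, which can be made $\le\tfrac12$ by choosing $T_*$ small; this yields Lemma \ref{int:control c by I in short time}:
\[
\int_s^t\|w(r)\|_{\B_q^{1+2\kappa'}}^q\,dr\le M\bigl(1+\|w(s)\|_{\B_q^{1+2\kappa'}}^q+\|v(s)\|_{\B_{3q}^{1/2+\kappa'}}^{3q}+\|w(s)\|_{L^{3q}}^{3q}\bigr).
\]
But this introduces a second difficulty that your proposal does not address at all: the right-hand side depends on the \emph{pointwise} value $I_s$ at the left endpoint $s$, not on an integral. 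Since one has no a priori uniform control on $I_s$, the paper propagates the bound across intervals by averaging in $s$: for $s\in[kT_*,(k+1)T_*]$,
\[
I((k+1)T_*,(k+2)T_*)\le\frac{1}{T_*}\int_{kT_*}^{(k+1)T_*}I(s,(k+2)T_*)\,ds\le M_*+\frac{M_*}{T_*}I(kT_*,(k+1)T_*),
\]
and then iterates. Your proposal skips both the localization (hence the small prefactor) and the averaging (hence the propagation), so the final absorption step does not go through.

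Everything else in your plan — the mild-form decomposition into the $\mathcal{W}_{(i)}$, the Bony/H\"older/interpolation estimates for $G_{(1)},G_{(2)}$, the treatment of $G_{(6)}$ via Lemma \ref{w:lemm:estimate of G_6} and Theorem \ref{dw:goal}, the use of Theorem \ref{w:control L^p by Besov} to eliminate $A_T$, and the final control of the $v$-integral via \eqref{v:estimate of v in Besov} and Jensen — matches the paper's proof and is sound. The missing piece is the short-time/iteration architecture, which is what actually makes the absorption possible.
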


First we will show the follwing result.

\begin{lemm}\label{int:control c by I in short time}
There exist $T_*>0$ and $M<\infty$ depending only on $\mu,\nu,\lambda,c,\kappa,\kappa',p,T$ and $\$\mathbb{X}\$_{\kappa,T}$ such that for every $0\le s\le t\le T$ satisfying $t-s\le2T_*$,
\[\int_s^t\|w(r)\|_{\B_q^{1+2\kappa'}}^qdr\le M(1+\|w(s)\|_{\B_q^{1+2\kappa'}}^q+\|v(s)\|_{\B_{3q}^{\frac{1}{2}+\kappa'}}^{3q}+\|w(s)\|_{L^{3q}}^{3q}).\]
\end{lemm}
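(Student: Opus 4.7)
The plan is to restart the mild formulation of $w$ at time $s$ and, after taking the $L^q_r$ norm of $\|w(r)\|_{\B_q^{1+2\kappa'}}$ on the short interval $[s,s+2T_*]$, close a self-referential inequality that can be absorbed into the left-hand side once $T_*$ is chosen small enough (depending only on $\$\mathbb{X}\$_{\kappa,T}$ and the other fixed parameters). For $r\in[s,s+2T_*]$,
\[
w(r)=e^{(r-s)L_\mu}w(s)+\sum_{i=1}^{8}\int_s^r e^{(r-u)L_\mu}G_{(i)}(v,w)(u)\,du+c\int_s^r e^{(r-u)L_\mu}v(u)\,du.
\]
Since the semigroup is bounded on $\B_q^{1+2\kappa'}$, the linear piece contributes $\lesssim\|w(s)\|_{\B_q^{1+2\kappa'}}$ pointwise in $r$, hence $\lesssim(t-s)^{1/q}\|w(s)\|_{\B_q^{1+2\kappa'}}$ after the $L^q_r$ norm, which is already of the desired form. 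For the nonlinear part I would adapt the estimates of Section~\ref{section:apriori_dw} for $\mathcal{W}_{(i)}$ (carried out in $L^q$) to the stronger target $\B_q^{1+2\kappa'}$, via the heat-semigroup smoothing
\[
\|e^{(r-u)L_\mu}G_{(i)}\|_{\B_q^{1+2\kappa'}}\lesssim(r-u)^{-(1+2\kappa'-\beta_i)/2}\|G_{(i)}\|_{\B_q^{\beta_i}}
\]
for suitably chosen $\beta_i>-1$, followed by Young's convolution inequality in $L^q_r$.

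The purely stochastic term $G_{(4)}$ is absorbed into $M$. The polynomial terms $G_{(2)},G_{(3)},G_{(5)}$ are handled exactly as in Lemma~\ref{dw:estimate of W_2-4}: after interpolation of the mixed $vw$-products they are dominated by $\|v\|_{\B_{3q}^{1/2+\kappa'}}^{3q}$ and $\|w\|_{L^{3q}}^{3q}$ plus a $\B_q^{1+2\kappa'}$ piece of $w$. The a priori estimates of Section~\ref{section:apriori_v}, translated to start at $s$, then let me replace $\|v(r)\|_{\B_{3q}^{1/2+\kappa'}}$ by $\|v(s)\|_{\B_{3q}^{1/2+\kappa'}}$ plus integrals of $\|w\|_{L^{3q}}$, and the commutator term $G_{(6)}$ is reduced to the same quantities via Lemma~\ref{local:estimate of com}.

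The two delicate terms are $G_{(1)}$ and $G_{(7)}$. For the cubic $G_{(1)}$, the crude bound $\|G_{(1)}\|_{L^q}\lesssim\|v\|_{L^{3q}}^3+\|w\|_{L^{3q}}^3$ combined with the integrable smoothing exponent $(1+2\kappa')/2<1$ and Young's convolution reduces the matter to controlling $\sup_{r\in[s,s+2T_*]}(\|v(r)\|_{L^{3q}}+\|w(r)\|_{L^{3q}})$ by the data at time $s$; this is a parallel short-time bootstrap on $\|w\|_{L^{3q}}$, set up from the same mild formulation with target norm $L^{3q}$ instead of $\B_q^{1+2\kappa'}$, yielding $\sup_{r}\|w(r)\|_{L^{3q}}\lesssim\|w(s)\|_{L^{3q}}$ up to small corrections, and similarly for $v$ from the estimates of Section~\ref{section:apriori_v}. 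For the self-referential term $G_{(7)}=-\nu(2w\rs\CD+\conj w\rs\CC)$, the resonant bound $\|G_{(7)}\|_{\B_q^{2\kappa'-\kappa}}\lesssim\|w\|_{\B_q^{1+2\kappa'}}\$\mathbb{X}\$_{\kappa,T}$ gives, after semigroup smoothing of order $(1+\kappa)/2$ and Young's convolution, a contribution of the form
\[
T_*^{(1-\kappa)/2}\,\$\mathbb{X}\$_{\kappa,T}\Bigl(\int_s^t\|w(r)\|_{\B_q^{1+2\kappa'}}^q\,dr\Bigr)^{1/q},
\]
which is self-referential and forces the choice of $T_*$ small so that this prefactor is at most $\tfrac12$.

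Combining all of the above, raising the resulting pointwise bound on $\|w(r)\|_{\B_q^{1+2\kappa'}}$ to the $q$-th power and integrating in $r\in[s,t]$ yields
\[
\int_s^t\|w(r)\|_{\B_q^{1+2\kappa'}}^q dr\le\tfrac12\int_s^t\|w(r)\|_{\B_q^{1+2\kappa'}}^q dr+M\bigl(1+\|w(s)\|_{\B_q^{1+2\kappa'}}^q+\|v(s)\|_{\B_{3q}^{1/2+\kappa'}}^{3q}+\|w(s)\|_{L^{3q}}^{3q}\bigr),
\]
from which the claim follows by absorption. The main obstacle I expect is the bookkeeping around $G_{(1)}$ and $G_{(6)}$: establishing the auxiliary $L^{3q}$-bootstrap for $w$, and tracking all of the mixed $vw$-products so that only $\|v(s)\|_{\B_{3q}^{1/2+\kappa'}}$, $\|w(s)\|_{L^{3q}}$, and $\|w(s)\|_{\B_q^{1+2\kappa'}}$ survive on the right-hand side, with every remaining integral of $\|w\|_{\B_q^{1+2\kappa'}}^q$ carrying a coefficient that vanishes as $T_*\downarrow0$.
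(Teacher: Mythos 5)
Your overall framework is aligned with the paper's proof: restart the mild formulation of $w$ at time $s$, decompose the increment as $\sum_{i=1}^8\mathcal{W}_{(i)}(s,\cdot)$, apply heat-semigroup smoothing and Young's convolution in $L^q_r$, reduce $G_{(2)},G_{(3)},G_{(5)}$ via Lemma~\ref{dw:estimate of W_2-4}, and finally absorb the self-referential contributions (from $G_{(7)}$ and the $\|w\|_{\B_q^{1+2\kappa'}}$-tail of other terms) by choosing $T_*$ small. That part is sound.

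The genuine gap is in your treatment of $G_{(1)}$. After Young's convolution, the $\mathcal{W}_{(1)}$ contribution is bounded by $(t-s)^{\theta}\int_s^t\|w(u)\|_{L^{3q}}^{3q}\,du$ for some $\theta>0$; you then propose to control $\int_s^t\|w\|_{L^{3q}}^{3q}$ by ``a parallel short-time bootstrap on $\|w\|_{L^{3q}}$ from the mild formulation.'' This cannot close with a $T_*$ that depends only on $\mu,\nu,\lambda,c,\kappa,\kappa',p,T,\$\mathbb{X}\$_{\kappa,T}$, which is what the lemma requires (and what the iteration over $[kT_*,(k+1)T_*]$ in the proof of Theorem~\ref{int:apriori estimate on L^1} is built on). A mild-formulation bootstrap for $\sup_r\|w(r)\|_{L^{3q}}$ in the presence of the genuinely cubic term $G_{(1)}$ produces a self-referential inequality of the schematic form $K\lesssim\|w(s)\|_{L^{3q}}+T_*^{1-\theta}K^3+\cdots$, whose absorption forces $T_*$ to depend on $\|w(s)\|_{L^{3q}}$. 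The lemma needs $T_*$ uniform over $s$ and over the (a priori unbounded) state at time $s$, so this route fails.

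The paper's proof instead appeals here to Theorem~\ref{w:control L^p by Besov} (restarted at time $s$), which gives the estimate
\begin{equation*}
\int_s^t\|w(r)\|_{L^{3q}}^{3q}\,dr\ \lesssim\ 1+\|v(s)\|_{\B_{3q}^{\frac12+\kappa'}}^{3q}+\|w(s)\|_{L^{3q}}^{3q}+\int_s^t\|w(r)\|_{\B_q^{1+2\kappa'}}^q\,dr.
\end{equation*}
This is not a soft bootstrap: it is the $L^{2p}$-testing estimate of Proposition~\ref{w:L^2p control of w from PDE}, which uses the dissipative structure of CGL (the term $-\Re\nu\,\|w\|_{L^{2p+2}}^{2p+2}$ appears on the left with $\Re\nu>0$ and absorbs the cubic self-interaction). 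Once you substitute this bound for $A(s,t)$, all remaining self-referential pieces carry a factor $(t-s)^{\theta}$ with $\theta>0$ and can be absorbed by picking $T_*$ small, exactly as you proposed for $G_{(7)}$. So the fix is to replace your $L^{3q}$-bootstrap with an invocation of Theorem~\ref{w:control L^p by Besov}; without it, the proof does not close with state-independent $T_*$.
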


To prove the above lemma, we use the decomposition \eqref{dw:decomposition of delta'w} and write
\begin{align*}
\int_s^t\|w(r)\|_{\B_{q}^{1+2\kappa'}}^{q}dr
&\lesssim(t-s)\|w(s)\|_{\B_{q}^{1+2\kappa'}}^{q}
+\sum_{i=1}^8\int_s^t\|\mathcal{W}_{(i)}(s,r)\|_{\B_{q}^{1+2\kappa'}}^{q}dr.
\end{align*}
In Lemmas \ref{integrable:Lq[st] of 1-4}-\ref{integrable:Lq[st] of 578}, we will show that the last eight terms are bounded by the terms of the form:
\[(t-s)^\theta(\|v(s)\|_{\B_{3q}^{\frac{1}{2}+\kappa'}}^{3q}+V(s,t)+A(s,t)+C(s,t)),\quad\theta\in(0,1),\]
where
\begin{align*}
V(s,t)&=\int_s^t(1+\|v(r)\|_{\B_{3q}^{\frac{1}{2}+\kappa'}}^{3q})dr,\\
A(s,t)&=\int_s^t(1+\|w(r)\|_{L^{3q}}^{3q})dr,\\
C(s,t)&=\int_s^t\|w(r)\|_{\B_{q}^{1+2\kappa'}}^{q}dr.
\end{align*}
As discussed in \cite[Section~6]{MW16}, our proof starts with Young's convolution inequality. For $i=1,\dots,8$, we have
\begin{align*}
\int_s^t\|\mathcal{W}_{(i)}(s,r)\|_{\B_q^{1+2\kappa'}}^qdr
&\lesssim\int_s^t\left(\int_s^r(r-u)^{-\frac{1+2\kappa'-\alpha_i}{2}}\|G_{(i)}(u)\|_{\B_q^{\alpha_i}}du\right)^qdr\\
&\lesssim\left(\int_s^t(t-r)^{-\frac{1+2\kappa'-\alpha_i}{2}}dr\right)^q\int_s^t\|G_{(i)}(r)\|_{\B_q^{\alpha_i}}^qdr\\
&\lesssim(t-s)^{\frac{1-2\kappa'+\alpha_i}{2}q}\int_s^t\|G_{(i)}(r)\|_{\B_q^{\alpha_i}}^qdr,
\end{align*}
where $\alpha_i\in(-1+2\kappa',1+2\kappa')$. Thus we need to consider $L^q[s,t]$ estimates of $G_i$ in $\B_q^{\alpha_i}$ norm.

\begin{lemm}\label{integrable:Lq[st] of 1-4}
For every $0\le s\le t\le T$, we have
\begin{align*}
\int_s^t\|\mathcal{W}_{(1)}(s,r)\|_{\B_q^{1+2\kappa'}}^qdr&\lesssim(t-s)^{\frac{1-2\kappa'}{2}q}(V(s,t)+A(s,t)),\\
\int_s^t\|\mathcal{W}_{(2)}(s,r)\|_{\B_q^{1+2\kappa'}}^qdr&\lesssim(t-s)^{\frac{1-6\kappa'}{4}q}(V(s,t)+A(s,t)+C(s,t)),\\
\int_s^t\|\mathcal{W}_{(3)}(s,r)\|_{\B_q^{1+2\kappa'}}^qdr&\lesssim(t-s)^{\frac{1-6\kappa'}{4}q}(V(s,t)+C(s,t)),\\
\int_s^t\|\mathcal{W}_{(4)}(s,r)\|_{\B_q^{1+2\kappa'}}^qdr&\lesssim(t-s)^{\frac{1-8\kappa'}{4}q}\int_s^tdr.
\end{align*}
\end{lemm}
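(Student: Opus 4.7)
The plan is to run Young's convolution inequality on each $\mathcal{W}_{(i)}(s,r)=\int_s^r e^{(r-u)(i+\mu)\Delta}G_{(i)}(u)\,du$, reducing the $\B_q^{1+2\kappa'}$-bound to an $L^q[s,t]$-estimate of $G_{(i)}$ in a suitable negative Besov norm. Using the heat-smoothing bound $\|e^{\tau(i+\mu)\Delta}u\|_{\B_q^{1+2\kappa'}}\lesssim \tau^{-(1+2\kappa'-\alpha_i)/2}\|u\|_{\B_q^{\alpha_i}}$ together with Young's inequality for convolution against a $\tau^{-\gamma}$ kernel, I obtain
\begin{align*}
\int_s^t\|\mathcal{W}_{(i)}(s,r)\|_{\B_q^{1+2\kappa'}}^q\,dr
\lesssim (t-s)^{\frac{1-2\kappa'+\alpha_i}{2}q}\int_s^t\|G_{(i)}(r)\|_{\B_q^{\alpha_i}}^q\,dr
\end{align*}
for every $\alpha_i\in(-1+2\kappa',1+2\kappa')$. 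Matching $(t-s)$-exponents with the lemma forces $\alpha_1=0$, $\alpha_2=\alpha_3=-\tfrac12-\kappa'$ and $\alpha_4=-\tfrac12-2\kappa'$, so the task reduces to bounding $\int_s^t\|G_{(i)}\|_{\B_q^{\alpha_i}}^q\,dr$ appropriately.

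For $G_{(1)}$ I use $\|G_{(1)}\|_{\B_q^0}\lesssim\|G_{(1)}\|_{L^q}$ (Proposition~\ref{para:estimates of Besov norm}(4)) and H\"older to bound $\|(v+w)^2(\overline{v}+\overline{w})\|_{L^q}^q\lesssim\|v\|_{L^{3q}}^{3q}+\|w\|_{L^{3q}}^{3q}$; the embedding $\B_{3q}^{\frac12+\kappa'}\hookrightarrow L^{3q}$ then yields $V(s,t)+A(s,t)$. For $G_{(2)}$ I reuse the pointwise bound \eqref{w:W_2:estimate of G_2} established inside Lemma~\ref{dw:estimate of W_2-4},
\begin{align*}
\|G_{(2)}(r)\|_{\B_q^{-\frac12-\kappa}}^q\lesssim\|v(r)\|_{\B_{3q}^{\frac12+\kappa'}}^{3q}+a_r+c_r,
\end{align*}
which, since $\kappa<\kappa'$ yields $\|G_{(2)}\|_{\B_q^{-\frac12-\kappa'}}\le\|G_{(2)}\|_{\B_q^{-\frac12-\kappa}}$, integrates to $V+A+C$. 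For $G_{(3)}$ the key observation is that every summand is linear in $(v,w)$ times a driver whose worst regularity is $-\tfrac12-\kappa$ (coming from the $P_1$-coefficients $\ICCD\JY$, $\JDDC\IX$, $\ICCD\IX$ built out of $\DICCD$, $\CICCD$ and plain paraproducts; the other drivers $\CDICD$, $\CDICC$, $\CCJDC$, $\CCJDD$ are better at $-\kappa$). Bony decomposition and Proposition~\ref{para:paraproduct and resonant} then give
\begin{align*}
\|G_{(3)}\|_{\B_q^{-\frac12-\kappa'}}\le\|G_{(3)}\|_{\B_q^{-\frac12-\kappa}}\lesssim\|v\|_{\B_q^{\frac12+\kappa'}}+\|w\|_{\B_q^{\frac12+\kappa'}},
\end{align*}
where $\kappa'-\kappa>0$ is exactly what keeps the resonant part well-defined. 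Raising to the $q$-th power and using $a^q\lesssim 1+a^{3q}$, the embeddings $\B_{3q}^{\frac12+\kappa'}\hookrightarrow\B_q^{\frac12+\kappa'}$ and $\B_q^{1+2\kappa'}\hookrightarrow\B_q^{\frac12+\kappa'}$ convert the $v$-part and the $w$-part into $V$- and $C$-contributions respectively, giving $V+C$. Finally, $G_{(4)}$ depends only on the drivers; its worst block $\ICCD(\rs+\pr)\CD$ sits in $C_T\B_\infty^{-\frac12-2\kappa}$ thanks to the given $\CDICCD$, hence $\|G_{(4)}\|_{\B_q^{-\frac12-2\kappa'}}\lesssim 1$ uniformly by monotonicity in regularity (using $\kappa<\kappa'$), so the $L^q[s,t]$-integral produces $\int_s^t dr$.

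The main obstacle I expect is the bookkeeping inside $G_{(3)}$: one must verify term by term that none of the renormalized $P_1$-coefficients secretly forces the appearance of the $L^{3q}$-norm of $w$, for an $A(s,t)$ contribution would violate the $V+C$ statement. The structural fact making this work is that every $G_{(3)}$-summand has the schematic form $(v+w)\cdot X$ (or its conjugate) with $X\in\B_\infty^{-\frac12-\kappa}$, so the product can be controlled by $\|v+w\|_{\B_q^{\frac12+\kappa'}}$ alone; the constraint $\kappa<\kappa'$ is used precisely to ensure the positivity $\tfrac12+\kappa'-\kappa>0$ required by Proposition~\ref{para:paraproduct and resonant}(3).
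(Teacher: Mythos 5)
Your proposal is correct and follows essentially the same route as the paper: apply Young's convolution inequality with the heat-smoothing bound to reduce each $\mathcal{W}_{(i)}$-estimate to an $L^q[s,t]$-bound on $\|G_{(i)}\|_{\B_q^{\alpha_i}}$, choose $\alpha_1=0$ (giving $\|G_{(1)}\|_{L^q}^q\lesssim\|v\|_{L^{3q}}^{3q}+\|w\|_{L^{3q}}^{3q}$), reuse \eqref{w:W_2:estimate of G_2} for $G_{(2)}$, exploit linearity in $(v,w)$ plus Bony decomposition and $\kappa<\kappa'$ for $G_{(3)}$, and observe that $G_{(4)}$ is purely driver-dependent. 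The only cosmetic difference is that the paper takes $\alpha_2=\alpha_3=-\tfrac12-\kappa$ and $\alpha_4=-\tfrac12-2\kappa$ (using $\kappa$, yielding slightly sharper powers of $t-s$ that are then relaxed to those stated), whereas you take $-\tfrac12-\kappa'$ and $-\tfrac12-2\kappa'$ so that the exponents match the statement exactly; both choices are valid and lead to the same conclusion.
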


\begin{proof}
Let $\alpha_1=0$, $\alpha_2=\alpha_3=-\frac{1}{2}-\kappa$ and $\alpha_4=-\frac{1}{2}-2\kappa$. The first one immediately follows from
\[\|G_{(1)}(r)\|_{L^q}^q\lesssim\|v(r)\|_{L^{3q}}^{3q}+\|w(r)\|_{L^{3q}}^{3q}.\]
The second one follows from the bound \eqref{w:W_2:estimate of G_2}. The others are obtained more easily.
\end{proof}

\begin{lemm}
For every $0\le s\le t\le T$, we have
\begin{align*}
\int_s^t\|\mathcal{W}_{(6)}(s,r)\|_{\B_q^{1+2\kappa'}}^qdr&\lesssim(t-s)^{\frac{1-2\kappa'}{2}q}(\|v(s)\|_{\B_{3q}^{\frac{1}{2}+\kappa'}}^{3q}+A(s,t)+C(s,t)).
\end{align*}
\end{lemm}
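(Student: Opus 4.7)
The argument parallels the prior Lemmas \ref{integrable:Lq[st] of 1-4}. Since $\CC,\CD\in C_T\B_\infty^{-1-\kappa}$ and $\com(v,w)(r)\in\B_q^{1+\kappa'}$, the resonant estimate of Proposition \ref{para:paraproduct and resonant}(3) yields $\|G_{(6)}(r)\|_{\B_q^{\kappa'-\kappa}}\lesssim\|\com(v,w)(r)\|_{\B_q^{1+\kappa'}}$. The Young-convolution step of the preamble with $\alpha_6=\kappa'-\kappa$ then produces a prefactor $(t-s)^{(1-\kappa-\kappa')q/2}$ and reduces the task to bounding $\int_s^t\|\com(v,w)(r)\|_{\B_q^{1+\kappa'}}^q\,dr$; under $\kappa<\kappa'$ this prefactor dominates $(t-s)^{(1-2\kappa')q/2}$ up to $T$-dependent constants.

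For the latter integral, I apply the commutator bound \eqref{v:estimate of com in Besov} restarted with $s$ in place of $0$ and $v(s)$ in place of $v_0$; this restart is legitimate because $\com(v,w)$ is an algebraic expression whose time dependence is dictated solely by the mild formulation of $v$ on $[s,r]$. Raising to the $q$th power via $(\sum x_i)^q\lesssim\sum x_i^q$ and integrating over $r\in[s,t]$, the first five summands are handled by routine calculus: the $(r-s)^{-1/4}\|v(s)\|_{\B_q^{1/2+\kappa'}}$ term integrates to $(t-s)^{1-q/4}\|v(s)\|_{\B_q^{1/2+\kappa'}}^q$ (finite since $q<4$), which is absorbed into $\|v(s)\|_{\B_{3q}^{1/2+\kappa'}}^{3q}$ via the Besov embedding in the space index (Proposition \ref{para:estimates of Besov norm}(2)) followed by Young's inequality; the $\|w(r)\|_{L^q}$ pieces are controlled by $A(s,t)$ using $\|w\|_{L^q}\le\|w\|_{L^{3q}}$ on $\T^3$ together with H\"older in time; and the convolution against $\|w(u)\|_{\B_q^{1+2\kappa'}}$ with a locally integrable kernel produces $C(s,t)$.

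The principal obstacle is the $\delta w$ contribution $\int_s^r(r-u)^{-1-(\kappa+\kappa')/2}\|\delta_{ur}w\|_{L^q}\,du$. Here I substitute Theorem \ref{dw:goal} restarted at $s$,
$$\|\delta_{ur}w\|_{L^q}\lesssim(r-u)^{2\kappa'}\bigl(1+\|v(s)\|_{\B_{3q}^{1/2+\kappa'}}^{3q}+\|w(u)\|_{\B_q^{1+2\kappa'}}^q+A(s,r)+C(s,r)\bigr)^{1/q},$$
so that the effective kernel becomes $(r-u)^{-1+(3\kappa'-\kappa)/2}$, which is integrable since $\kappa<3\kappa'$. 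Taking the $q$th power via Jensen's inequality against the normalized weighted measure $(r-u)^{-1+(3\kappa'-\kappa)/2}du$ and then applying Fubini in $r,u$, the pointwise factor $\|w(u)\|_{\B_q^{1+2\kappa'}}^q$ is absorbed into $C(s,t)$ with a positive $(t-s)$-exponent, and the remaining pieces give the required $\|v(s)\|^{3q}_{\B_{3q}^{1/2+\kappa'}}$, $A(s,t)$, and $C(s,t)$ contributions. The key conceptual point is that applying Theorem \ref{dw:goal} pointwise in $u$ puts $c_u$ under the $u$-integration, so no pointwise $\|w(s)\|_{\B_q^{1+2\kappa'}}$ survives on the right-hand side (as would arise from the self-referential $\$w\$_{q,\kappa';t}$ used in Section \ref{section:apriori_dw}). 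Combining all contributions, every $(t-s)$-exponent that arises meets or exceeds $(1-2\kappa')q/2$ under the standing hypotheses $\kappa'<1/18$ and $\kappa<\kappa'$, which yields the claim.
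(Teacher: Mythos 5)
Your argument is correct and matches the paper's approach. The paper's own proof is a one-liner: it sets $\alpha_6=0$, notes $\|G_{(6)}\|_{L^q}\lesssim\|\com(v,w)\|_{\B_q^{1+\kappa'}}$ via the resonant estimate, and then appeals to "the same argument as in the proof of Lemma \ref{w:lemm:estimate of G_6}, taking care that the initial time is $s$''; you have simply filled in those details (including the restart of \eqref{v:estimate of com in Besov} and Theorem \ref{dw:goal} at time $s$), and your choice $\alpha_6=\kappa'-\kappa$ instead of $0$ is an inessential variant that gives a strictly better power of $(t-s)$. The one small streamlining worth noting: for the $\delta w$ contribution you handle the kernel $(r-u)^{-1+(3\kappa'-\kappa)/2}$ directly via Jensen and Fubini, whereas the argument in Lemma \ref{w:lemm:estimate of G_6} uses a $\delta$-splitting of the inner integral; that splitting is only needed there to extract an $\epsilon$-small factor in front of $N_t$, which is irrelevant in the present statement, so your simpler route is perfectly adequate here.
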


\begin{proof}
Let $\alpha_6=0$. By the same argument as in the proof of Lemma \ref{w:lemm:estimate of G_6}, we have
\[\int_s^t\|G_{(6)}(r)\|_{L^q}^qdr\lesssim\int_s^t\|\com(v,w)(r)\|_{\B_q^{1+\kappa'}}^qdr\lesssim\|v(s)\|_{\B_{3q}^{\frac{1}{2}+\kappa'}}^{3q}+A(s,t)+C(s,t),\]
taking care that the initial time is $s$.
\end{proof}

\begin{lemm}\label{integrable:Lq[st] of 578}
For every $0\le s\le t\le T$, we have
\begin{align*}
\int_s^t\|\mathcal{W}_{(5)}(s,r)\|_{\B_q^{1+2\kappa'}}^qdr&\lesssim(t-s)^{\frac{1-2\kappa'}{2}q}(V(s,t)+C(s,t)),\\
\int_s^t\|\mathcal{W}_{(7)}(s,r)\|_{\B_q^{1+2\kappa'}}^qdr&\lesssim(t-s)^{\frac{1-2\kappa'}{2}q}C(s,t),\\
\int_s^t\|\mathcal{W}_{(8)}(s,r)\|_{\B_q^{1+2\kappa'}}^qdr&\lesssim(t-s)^{\frac{1-4\kappa'}{4}q}(V(s,t)+C(s,t)).
\end{align*}
\end{lemm}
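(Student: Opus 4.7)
The plan is to treat all three bounds uniformly by the template already established in Lemmas~\ref{integrable:Lq[st] of 1-4} and~\ref{w:lemm:estimate of G_6}: for each $i\in\{5,7,8\}$, start from Young's convolution inequality
\[
\int_s^t\|\mathcal{W}_{(i)}(s,r)\|_{\B_q^{1+2\kappa'}}^qdr
\lesssim(t-s)^{\frac{1-2\kappa'+\alpha_i}{2}q}\int_s^t\|G_{(i)}(r)\|_{\B_q^{\alpha_i}}^qdr,
\]
pick the exponent $\alpha_i$ that reproduces the claimed power of $(t-s)$, and then apply the paraproduct/resonant/commutator estimates from Proposition~\ref{para:paraproduct and resonant} and the trilinear map $R$ to bound the integrand. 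Matching time exponents fixes $\alpha_5=\alpha_7=0$ and $\alpha_8=-\tfrac12$.

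For $\mathcal{W}_{(5)}$ I would treat each summand of $G_{(5)}$ by the trilinear $R$-estimate, placing the stochastic pair in $\B_\infty^{1-\kappa}\times\B_\infty^{-1-\kappa}$ (so $\beta+\gamma=-2\kappa<0$) and the remaining factor $v+w$ in $\B_q^{\tfrac12+\kappa'}$. This gives $G_{(5)}\in\B_q^{\tfrac12+\kappa'-2\kappa}\hookrightarrow L^q$. Using $\|v\|_{\B_q^{\tfrac12+\kappa'}}\le\|v\|_{\B_{3q}^{\tfrac12+\kappa'}}$ (item (2) of Proposition~\ref{para:estimates of Besov norm}) together with $\|w\|_{\B_q^{\tfrac12+\kappa'}}\lesssim\|w\|_{\B_q^{1+2\kappa'}}$ and Jensen's inequality (the exponents $q$ on the right are smaller than the $3q$ inside $V$ and than the $q$ inside $C$), the integrand drops into $V(s,t)+C(s,t)$.

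For $\mathcal{W}_{(7)}$ the only thing to check is that the resonants $w\rs\IX$ type terms constituting $G_{(7)}$ fall in $L^q$. By item (3) of Proposition~\ref{para:paraproduct and resonant}, $\|w\rs\CD\|_{\B_q^{2\kappa'-\kappa}}\lesssim\|w\|_{\B_q^{1+2\kappa'}}\|\CD\|_{\B_\infty^{-1-\kappa}}$ (and similarly for $\overline{w}\rs\CC$); the exponent $2\kappa'-\kappa>0$ since we may take $\kappa<2\kappa'$, so this embeds into $L^q$, giving $\int_s^t\|G_{(7)}\|_{L^q}^qdr\le C(s,t)$.

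For $\mathcal{W}_{(8)}$ the key observation is that $(v+w)\pr\CD=\CD\pl(v+w)$ and $\CD$ has \emph{negative} regularity, so item (2) of Proposition~\ref{para:paraproduct and resonant} (not item (1)) is the right tool: with $\alpha=-1-\kappa<0$ and $\beta=\tfrac12+\kappa$ we land exactly in $\B_q^{-1/2}$, so $\|G_{(8)}\|_{\B_q^{-1/2}}\lesssim\|v+w\|_{\B_q^{1/2+\kappa}}$, and the proof closes by the same two embeddings used in the $\mathcal{W}_{(5)}$ step. The hard part of the lemma is not any single estimate but the bookkeeping of exponents: one has to choose $\alpha_8=-\tfrac12$ \emph{before} knowing the bound on $G_{(8)}$, verify it gives the announced $(t-s)^{\tfrac{1-4\kappa'}{4}q}$ factor, and then verify that the paraproduct estimate is compatible with the interpolation constraints $\kappa<\kappa'$ and $\kappa<2\kappa'$ (automatic from the standing assumption $\kappa<\kappa'<\tfrac1{18}$).
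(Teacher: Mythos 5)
Your proof is correct and follows the paper's approach exactly: apply Young's convolution inequality with a well-chosen Besov regularity $\alpha_i$ for each $G_{(i)}$, then bound $\|G_{(i)}\|_{\B_q^{\alpha_i}}$ via the paraproduct/resonant/$R$-commutator estimates and close by the embeddings $\|v\|_{\B_q^{\frac12+\kappa'}}\le\|v\|_{\B_{3q}^{\frac12+\kappa'}}$, $\|w\|_{\B_q^{\frac12+\kappa'}}\le\|w\|_{\B_q^{1+2\kappa'}}$ into $V(s,t)$ and $C(s,t)$. The paper actually sets $\alpha_8=-\tfrac12+\kappa'-\kappa$ rather than your $-\tfrac12$ (yielding a marginally better power of $(t-s)$ that still implies the stated bound), and the resonants in $G_{(7)}$ are $w\rs\CD$ and $\overline{w}\rs\CC$ rather than the $w\rs\IX$ you wrote in passing, but your subsequent computation uses the correct objects, so neither discrepancy affects correctness.
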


\begin{proof}
Let $\alpha_5=\alpha_7=0$ and $\alpha_8=-\frac{1}{2}+\kappa'-\kappa$. The $L^q$ estimates of $G_{(5)},G_{(7)}$ and $G_{(8)}$ are easily obtained.
\end{proof}

To sum them up, we can show Lemma \ref{int:control c by I in short time}.

\begin{proof}[{Proof of Lemma \ref{int:control c by I in short time}}]
Combining above estimates, we have
\begin{align*}
C(s,t)\lesssim(t-s)\|w(s)\|_{\B_q^{1+2\kappa'}}^q+(t-s)^{\frac{1-8\kappa'}{4}}(\|v(s)\|_{\B_{3q}^{\frac{1}{2}+\kappa'}}^{3q}+V(s,t)+A(s,t)+C(s,t)).
\end{align*}
For $V$, from \eqref{v:estimate of v in Besov} we have
\begin{align}\label{int:estimate of V by A}
V(s,t)\lesssim1+\|v(s)\|_{\B_{3q}^{\frac{1}{2}+\kappa'}}^{3q}+A(s,t).
\end{align}
For $A$, we already have
\begin{align}\label{int:estimate of A by C}
A(s,t)\lesssim1+\|v(s)\|_{\B_{3q}^{\frac{1}{2}+\kappa'}}^{3q}+\|w(s)\|_{L^{3q}}^{3q}+C(s,t)
\end{align}
from Theorem \ref{w:control L^p by Besov}. Thus we have
\[C(s,t)\le M(t-s)\|w(s)\|_{\B_q^{1+2\kappa'}}^q+M(t-s)^{\frac{1-8\kappa'}{4}}(1+\|v(s)\|_{\B_{3q}^{\frac{1}{2}+\kappa'}}^{3q}+\|w(s)\|_{L^{3q}}^{3q}+C(s,t))\]
for some constant $M>0$. Therefore we obtain Lemma \ref{int:control c by I in short time} by choosing $T_*$ such that $M(2T_*)^{\frac{1-8\kappa'}{4}}\le\frac{1}{2}$.
\end{proof}

We return to the proof of Theorem \ref{int:apriori estimate on L^1}.

\begin{proof}[{Proof of Theorem \ref{int:apriori estimate on L^1}}]
Let
\[I_s=\|w(s)\|_{\B_q^{1+2\kappa'}}^q+\|v(s)\|_{\B_{3q}^{\frac{1}{2}+\kappa'}}^{3q}+\|w(s)\|_{L^{3q}}^{3q},\quad
I(s,t)=\int_s^tI_rdr.\]
By Combining Lemma \ref{int:control c by I in short time} with the estimates \eqref{int:estimate of V by A} and \eqref{int:estimate of A by C}, we have that for every $0\le s\le t\le T$ satisfying $t-s\le2T_*$,
\[I(s,t)\le M_*(1+I_s),\]
where $M_*$ depends only on $\mu,\nu,\lambda,c,\kappa,\kappa',p,T$ and $\$\mathbb{X}\$_{\kappa,T}$. Local well-posedness result (Theorem \ref{local:with singular initial condition} and Remark \ref{local:with smooth initial condition}) shows that there exist suitable choices of smaller $T_*$ and larger $M_*$, which depend on the initial value $(v_0,w_0)$, and such that we have
\[I(0,T_*)\le M_*.\]
For every $k\in\mathbf{N}$, because $I((k+1)T_*,(k+2)T_*)\le I(s,(k+2)T_*)$ for $s\in[kT_*,(k+1)T_*]$, we have
\begin{align*}
I((k+1)T_*,(k+2)T_*)&\le \frac{1}{T_*}\int_{kT_*}^{(k+1)T_*}I(s,(k+2)T_*)ds
\le\frac{M_*}{T_*}\int_{kT_*}^{(k+1)T_*}(1+I_s)ds\\
&\le M_*+\frac{M_*}{T_*}I(kT_*,(k+1)T_*).
\end{align*}
As a result, for $k=0,1,\dots$ we can prove that
\[I(kT_*,(k+1)T_*)\le M_*\sum_{i=0}^{k}\left(\frac{M_*}{T_*}\right)^i<\infty.\]
This completes the proof.
\end{proof}

\section{A priori $L^\infty[0,T]$ estimate of $(v,w)$}\label{section:global}

Let $(v,w)$ be the solution with initial value $(v_0,w_0)\in\B_{3q}^{\frac12+\kappa'}\times\B_q^{\frac{3}{2}-2\kappa'}$. In the settings of Theorem \ref{int:apriori estimate on L^1}, we show the following a priori $L^\infty[0,T]$ estimates of $(v,w)$.

\begin{theo}\label{global:L^infty of v}
Assume that $3\kappa'<\frac{3}{4}-\frac{1}{q}$. There exists a constant $C<\infty$ depending only on $\mu,\nu,\lambda,c,\kappa,\kappa',p,T,\$\mathbb{X}\$_{\kappa,T},\|v_0\|_{\B_{3q}^{\frac12+\kappa'}}$ and $\|w_0\|_{\B_q^{\frac{3}{2}-2\kappa'}}$ such that
\[\sup_{0\le t\le T}\|v(t)\|_{\B_{3q}^{\frac{1}{2}+\kappa'}}\le C.\]
\end{theo}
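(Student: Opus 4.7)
The plan is to feed the $L^{1}[0,T]$ bound from Theorem \ref{int:apriori estimate on L^1} into the integral representation \eqref{v:estimate of v in Besov} via H\"older's inequality; the condition $3\kappa'<\tfrac{3}{4}-\tfrac{1}{q}$ is precisely what makes the resulting H\"older exponents admissible.

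First I would apply \eqref{v:estimate of v in Besov} with $p=3q$, which gives
\[
\|v(t)\|_{\B_{3q}^{\frac{1}{2}+\kappa'}}
\lesssim \|v_0\|_{\B_{3q}^{\frac{1}{2}+\kappa'}}
+\int_0^t (t-s)^{-\frac{3}{4}-\kappa'}\bigl(1+\|w(s)\|_{L^{3q}}\bigr)ds,
\]
uniformly in $t\in[0,T]$. The constant piece contributes $\lesssim T^{\frac{1}{4}-\kappa'}$ since $\kappa'<\tfrac{1}{18}<\tfrac{1}{4}$, so I would discard it. The main term is then estimated by H\"older's inequality with exponents $\bigl(\tfrac{3q}{3q-1},3q\bigr)$:
\[
\int_0^t (t-s)^{-\frac{3}{4}-\kappa'}\|w(s)\|_{L^{3q}}\,ds
\le \Bigl(\int_0^t(t-s)^{-(\frac{3}{4}+\kappa')\frac{3q}{3q-1}}ds\Bigr)^{\frac{3q-1}{3q}}\Bigl(\int_0^t\|w(s)\|_{L^{3q}}^{3q}ds\Bigr)^{\frac{1}{3q}}.
\]

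The second factor is bounded by Theorem \ref{int:apriori estimate on L^1}. For the first factor to be finite, I need $(\tfrac{3}{4}+\kappa')\tfrac{3q}{3q-1}<1$, which after clearing denominators reads $3q\kappa'<\tfrac{3q-4}{4}$, equivalently $3\kappa'<\tfrac{3}{4}-\tfrac{1}{q}$. This is exactly the hypothesis. Under this hypothesis, the time integral is bounded by a finite constant times $T^{\,\frac{3q-1}{3q}-(\frac{3}{4}+\kappa')}$, uniformly in $t\le T$.

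Putting these pieces together and taking $\sup_{t\le T}$ yields the claimed bound, with the final constant depending only on the parameters listed in the statement. There is no real obstacle once Theorem \ref{int:apriori estimate on L^1} is available; the only point to verify carefully is the H\"older exponent computation, which I have already matched with the stated hypothesis, and the observation that the local well-posedness setup (Remark \ref{local:with smooth initial condition}) legitimizes working with the smoother initial class $\B_{3q}^{\frac{1}{2}+\kappa'}\times\B_q^{\frac{3}{2}-2\kappa'}$ assumed here.
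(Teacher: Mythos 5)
Your argument is correct and is essentially the paper's own proof: apply \eqref{v:estimate of v in Besov} with $p=3q$, use H\"older's inequality with exponents $\bigl(\tfrac{3q}{3q-1},3q\bigr)$, and observe that the hypothesis $3\kappa'<\tfrac34-\tfrac1q$ is exactly what makes $(\tfrac34+\kappa')\tfrac{3q}{3q-1}<1$, so the singular time integral is finite while the other factor is controlled by Theorem~\ref{int:apriori estimate on L^1}. No substantive differences from the paper's proof.
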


\begin{proof}
Since we already have a priori estimate $\int_0^T\|w(s)\|_{L^{3q}}^{3q}\lesssim1$ in Theorem \ref{int:apriori estimate on L^1}, from \eqref{v:estimate of v in Besov} we have
\begin{align*}
&\|v(t)\|_{\B_{3q}^{\frac{1}{2}+\kappa'}}\\
&\lesssim\|v_0\|_{\B_{3q}^{\frac{1}{2}+\kappa'}}+\left(\int_0^t(t-s)^{-(\frac{3}{4}+\kappa')\frac{3q}{3q-1}}ds\right)^{\frac{3q-1}{3q}}\left\{\int_0^t(1+\|w(s)\|_{L^{3q}})^{3q}ds\right\}^{\frac{1}{3q}}\\
&\lesssim1.
\end{align*}
since $(\frac{3}{4}+\kappa')\frac{3q}{3q-1}<1$.
\end{proof}

It remains to control $\|w\|_{\B_q^{\frac{3}{2}-2\kappa'}}$. We decompose it as follows.
\[\|w(t)\|_{\B_q^{\frac{3}{2}-2\kappa'}}\lesssim\|w_0\|_{\B_q^{\frac{3}{2}-2\kappa'}}+\sum_{i=1}^8\|\mathcal{W}_{(i)}(t)\|_{\B_q^{\frac{3}{2}-2\kappa'}}.\]
As discussed above, all of $\mathcal{W}_{(i)}$ have the bound of the form
\begin{align}\label{global:W by int of G}
\|\mathcal{W}_{(i)}(t)\|_{\B_q^{\frac{3}{2}-2\kappa'}}\lesssim\int_0^t(t-s)^{-\frac{3-4\kappa'-2\alpha_i}{4}}\|G_{(i)}(s)\|_{\B_q^{\alpha_i}}ds.
\end{align}
By Young's convolution inequality, we have
\[\left(\int_0^T\|\mathcal{W}_{(i)}(t)\|_{\B_q^{\frac{3}{2}-2\kappa'}}^{p_2}dt\right)^{\frac{1}{p_2}}\le\left(\int_0^T(T-t)^{-\frac{3-4\kappa'-2\alpha_i}{4}q_i}dt\right)^{\frac{1}{q_i}}\left(\int_0^T\|G_{(i)}(t)\|_{\B_q^{\alpha_i}}^{p_1}dt\right)^{\frac{1}{p_1}},\]
where $1+\frac{1}{p_2}=\frac{1}{q_i}+\frac{1}{p_1}$. This implies that if $\|G_{(i)}(t)\|_{\B_q^{\alpha_i}}$ has the $L^{p_1}[0,T]$ estimate, then we immediately have the $L^{p_2}$ estimate of $\|\mathcal{W}_{(i)}(t)\|_{\B_q^{\frac{3}{2}-2\kappa'}}$, where $q_i$ has to satisfy $\frac{3-4\kappa'-2\alpha_i}{4}q_i<1$. We ultimately aim to get $p_2=\infty$, which is interpreted as the $L^\infty[0,T]$ estimate: $\sup_{t\in[0,T]}\|w(t)\|_{\B_q^{\frac{3}{2}-2\kappa'}}<\infty$. Although this goal is not attained immediately, we are able to get $p_2=\infty$ by iterating Young's convolution inequality several times.

\begin{theo}\label{global:L^infty of w}
Assume that $q>\frac53$ and $\kappa'<\frac{1}{3}-\frac{1}{3q-2}$. There exists a constant $C<\infty$ depending only on $\mu,\nu,\lambda,c,\kappa,\kappa',p,T,\$\mathbb{X}\$_{\kappa,T},\|v_0\|_{\B_{3q}^{\frac12+\kappa'}}$ and $\|w_0\|_{\B_q^{\frac{3}{2}-2\kappa'}}$ such that
\[\sup_{0\le t\le T}\|w(t)\|_{\B_q^{\frac32-2\kappa'}}\le C.\]
\end{theo}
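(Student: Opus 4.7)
The plan is to estimate $\|w(t)\|_{\B_q^{3/2-2\kappa'}}$ via the mild decomposition
$$
\|w(t)\|_{\B_q^{3/2-2\kappa'}}\le \|e^{tL_\mu}w_0\|_{\B_q^{3/2-2\kappa'}}+\sum_{i=1}^{8}\|\mathcal{W}_{(i)}(t)\|_{\B_q^{3/2-2\kappa'}}
$$
and to iteratively upgrade the time-Lebesgue integrability of the right-hand side via Young's convolution inequality, as outlined in the paragraph preceding the statement. The starting input is Theorem \ref{int:apriori estimate on L^1}, which gives $L^1[0,T]$ control of $\|w\|_{\B_q^{1+2\kappa'}}^q$ and $\|w\|_{L^{3q}}^{3q}$, together with the uniform $v$-bound supplied by Theorem \ref{global:L^infty of v}. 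Thanks to the latter, $v$ can be frozen in all nonlinearity estimates, so that bounding $G_{(i)}$ reduces to controlling polynomial expressions in norms of $w$ (plus stochastic objects bounded by $\$\mathbb{X}\$_{\kappa,T}$).

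For each $i$ I choose $\alpha_i\in(-1+2\kappa',1+2\kappa')$ and, using Bony's decomposition together with Proposition \ref{para:paraproduct and resonant}, estimate $\|G_{(i)}(s)\|_{\B_q^{\alpha_i}}$ by a product of the form $\|w(s)\|_{\B_q^{\gamma}}^{a}\|w(s)\|_{L^{3q}}^{b}$ with $a+b$ at most the degree of the nonlinearity. Combining with \eqref{global:W by int of G} and Young's convolution inequality with exponents $1+\tfrac1{p_2}=\tfrac1{q_i}+\tfrac1{p_1}$ subject to $\tfrac{3-4\kappa'-2\alpha_i}{4}q_i<1$, I obtain a bound
$$
\|w\|_{L^{p_2}([0,T],\B_q^{3/2-2\kappa'})}\lesssim 1+\|w\|_{L^{ap_1}([0,T],\B_q^{\gamma})}^{a}\|w\|_{L^{bp_1'}([0,T],L^{3q})}^{b}.
$$
Interpolating $\|w\|_{\B_q^{\gamma}}$ between $L^{3q}$ and $\B_q^{3/2-2\kappa'}$ (Proposition \ref{para:interpolation}) feeds $\|w\|_{L^{p_1}([0,T],\B_q^{3/2-2\kappa'})}$ back in with a sub-unit exponent, closing the bootstrap: if at stage $n$ I know $\|w\|_{L^{p_n}([0,T],\B_q^{3/2-2\kappa'})}<\infty$, then at stage $n+1$ the same holds with $p_{n+1}>p_n$. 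Starting from $p_1=q$ (given by Theorem \ref{int:apriori estimate on L^1}), finitely many iterations drive $p_n\uparrow\infty$, which is the desired conclusion.

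The main obstacle is the cubic term $G_{(1)}=-\nu(v+w)^2(\conj v+\conj w)$. A naive H\"older split would multiply the Lebesgue exponent by $3$ at each step and therefore never reach $\infty$ for small $q$. The resolution, already announced in the introduction and formalized by Lemma \ref{global:lemm:iterating Young}, is to interpolate the cubic factor sharply between $L^{3q}$ (uniformly time-integrable by Theorem \ref{int:apriori estimate on L^1}) and $\B_q^{3/2-2\kappa'}$, which reduces the effective exponent in the iteration from $3$ to $12/7$. This renders the recursion contractive precisely in the regime $q>5/3$; as $q\downarrow 5/3$ the required number of iterations diverges, which is consistent with the theorem's hypothesis. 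The hypothesis $\kappa'<\tfrac{1}{3}-\tfrac{1}{3q-2}$ enters to guarantee that the Young convolution singularity $(t-s)^{-(3-4\kappa'-2\alpha_i)/4}$ remains integrable at every stage of the iteration (it is this condition that sets the admissible range of $q_i$). Combining all eight terms $\mathcal{W}_{(i)}$ treated in the same spirit, and collecting constants depending only on the parameters listed, the limit of the iteration yields the required uniform bound.
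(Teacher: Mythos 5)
Your overall route -- Duhamel decomposition, iterated Young convolution inequalities, and the weakening of the cubic exponent to $\tfrac{12}{7}$ via Lemma~\ref{global:lemm:iterating Young} -- is the paper's strategy, but the way you account for the two key ingredients contains genuine errors that would stop a write-up from closing. You claim the reduction $3\to\tfrac{12}{7}$ comes from ``interpolating the cubic factor between $L^{3q}$ (uniformly time-integrable by Theorem~\ref{int:apriori estimate on L^1}) and $\B_q^{3/2-2\kappa'}$.'' Theorem~\ref{int:apriori estimate on L^1} only gives $\int_0^T\|w(t)\|_{L^{3q}}^{3q}\,dt<\infty$, a time-integrated quantity; feeding that into an interpolation of $\|w(s)\|_{L^{3q}}^3$ leaves the remaining $L^{3q}$ factor with exactly the same time-integrability and yields no exponent gain. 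What actually makes the $\tfrac{12}{7}$ appear is the \emph{pointwise-in-time} bound $\sup_{t\le T}\|w(t)\|_{L^{3q-2}}\lesssim1$ (i.e.\ the $L^{2p}$ control from Theorem~\ref{w:control L^p by Besov} combined with Theorem~\ref{int:apriori estimate on L^1}), used as
\[\|w\|_{L^{3q}}\lesssim\|w\|_{\B_{3q}^\epsilon}\lesssim\|w\|_{\B_r^{7\epsilon/4}}^{4/7}\,\|w\|_{L^{3q-2}}^{3/7},\]
after which the $L^{3q-2}$ factor is absorbed into the constant and $\B_q^{3/2-2\kappa'}\hookrightarrow\B_r^{7\epsilon/4}$ is invoked. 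The hypothesis $\kappa'<\tfrac13-\tfrac1{3q-2}$ is used precisely to make that Besov embedding hold, not (as you state) to keep the kernel $(t-s)^{-(3-4\kappa'-2\alpha_i)/4}$ integrable.

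The iteration you describe (``finitely many iterations drive $p_n\uparrow\infty$'') also hides a two-level structure that is essential, since $\mathcal{W}_{(-1)}:=\sum_{i\ge2}\mathcal{W}_{(i)}$ depends only linearly on $\|w\|_{\B_q^{3/2-2\kappa'}}$ but carries the near-critical kernel $(t-s)^{-(2-\kappa')/2}$: each Young step on $\mathcal{W}_{(-1)}$ raises $\tfrac1p$ by only $\tfrac12\kappa'$, so it is the bottleneck. The argument that closes the theorem alternates one large jump on $\mathcal{W}_{(1)}$ -- first $\tfrac1{q_1}=\tfrac1q-\tfrac14$ using the raw $\|w\|_{L^{3q}}^3$ bound, then $\tfrac1{q_{n+1}}=\tfrac{12}{7q_n}-\tfrac14$ which drives $\tfrac1{q_n}$ to a nonpositive value \emph{only because} $\tfrac1{q_1}<\tfrac7{20}$, i.e.\ $q>\tfrac53$ -- with $\mathcal{O}((\kappa')^{-1})$ small steps on $\mathcal{W}_{(-1)}$ to catch up between jumps. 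Without spelling out this recursion and why the slope-$\tfrac{12}{7}$ map escapes to the good side of its repelling fixed point $\tfrac7{20}$, the claim that the exponent reaches $\infty$ in finitely many steps is unjustified, and in particular the proposal does not show where the hypothesis $q>\tfrac53$ actually bites.
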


We start the proof by estimating each $\mathcal{W}_{(i)}$ using a priori estimates
\begin{align*}
&\sup_{t\in[0,T]}\|v(t)\|_{\B_{3q}^{\frac{1}{2}+\kappa'}}\lesssim1,\\
&\sup_{t\in[0,T]}\|w(t)\|_{L^{3q-2}}\lesssim1,\\
&\int_0^T\|w(t)\|_{\B_q^{1+2\kappa'}}^qdt\lesssim1,\\
&\int_0^T\|w(t)\|_{L^{3q}}^{3q}\lesssim1.
\end{align*}
We can improve the bounds of $\mathcal{W}_{(i)}$ as follows. Note that the proportional constants appearing above and in the following inequalities depend on initial values $(v_0,w_0)$.

\begin{lemm}
Assume that $3\kappa'<\frac{3}{4}-\frac{1}{q}$. For every $t\in[0,T]$,
\begin{align}
\label{global:estimate of W_1}\|\mathcal{W}_{(1)}(t)\|_{\B_q^{\frac{3}{2}-2\kappa'}}&\lesssim1+\int_0^t(t-s)^{-\frac{3-4\kappa'}{4}}\|w(s)\|_{L^{3q}}^3ds,\\
\|\mathcal{W}_{(2)}(t)\|_{\B_q^{\frac{3}{2}-2\kappa'}}&\lesssim1+\int_0^t(t-s)^{-\frac{2-\kappa'}{2}}(\|w(s)\|_{L^{3q}}^3+\|w(s)\|_{\B_q^{1+2\kappa'}})ds,\\
\|\mathcal{W}_{(3)}(t)\|_{\B_q^{\frac{3}{2}-2\kappa'}}&\lesssim1+\int_0^t(t-s)^{-\frac{2-\kappa'}{2}}\|w(s)\|_{\B_q^{\frac{1}{2}+\kappa'}}ds,\\
\|\mathcal{W}_{(4)}(t)\|_{\B_q^{\frac{3}{2}-2\kappa'}}&\lesssim1,\\
\|\mathcal{W}_{(5)}(t)\|_{\B_q^{\frac{3}{2}-2\kappa'}}&\lesssim1+\int_0^t(t-s)^{-\frac{3-4\kappa'}{4}}\|w(s)\|_{\B_q^{\frac{1}{2}+\kappa'}}ds,\\
\|\mathcal{W}_{(6)}(t)\|_{\B_q^{\frac{3}{2}-2\kappa'}}&\lesssim1+\int_0^t(t-s)^{-\frac{3-4\kappa'}{4}}\|w(s)\|_{\B_q^{1+2\kappa'}}ds,\\
\|\mathcal{W}_{(7)}(t)\|_{\B_q^{\frac{3}{2}-2\kappa'}}&\lesssim\int_0^t(t-s)^{-\frac{3-4\kappa'}{4}}\|w(s)\|_{\B_q^{1+\kappa'}}ds,\\
\label{global:estimate of W_8}\|\mathcal{W}_{(8)}(t)\|_{\B_q^{\frac{3}{2}-2\kappa'}}&\lesssim1+\int_0^t(t-s)^{-\frac{3-4\kappa'}{4}}\|w(s)\|_{\B_q^{1+\kappa'}}ds,
\end{align}
where the implicit constants depend on $\|v_0\|_{\B_{3q}^{\frac12+\kappa'}}+\|w_0\|_{\B_q^{\frac{3}{2}-2\kappa'}}$. As a result, we have
\begin{align}\label{global:w in L^q}
\int_0^T\|w(t)\|_{\B_q^{\frac{3}{2}-2\kappa'}}^qdt\lesssim1.
\end{align}
\end{lemm}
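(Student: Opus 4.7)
The plan is to establish each of the eight pointwise estimates on $\|\mathcal{W}_{(i)}(t)\|_{\B_q^{3/2-2\kappa'}}$ via the master inequality \eqref{global:W by int of G} with a carefully chosen auxiliary regularity $\alpha_i \in (-1+2\kappa', 1+2\kappa')$ such that the exponent $\theta_i := (3-4\kappa'-2\alpha_i)/4$ is strictly less than $1$, so that the convolution kernel $(t-s)^{-\theta_i}$ is integrable on $[0,T]$. For each $i$ one then bounds $\|G_{(i)}(s)\|_{\B_q^{\alpha_i}}$ in terms of the four a priori controlled quantities listed just before the lemma, and the integrated bound \eqref{global:w in L^q} follows by a termwise application of Young's convolution inequality.

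For the individual $G_{(i)}$-estimates, my choices are as follows. For $G_{(1)}$ I take $\alpha_1=0$: expanding $(v+w)^2(\conj{v}+\conj{w})$ into cubic monomials and applying H\"older at exponent $L^{3q}$ gives $\|G_{(1)}\|_{L^q}\lesssim 1+\|w\|_{L^{3q}}^3$, using $\|v\|_{L^{3q}}\lesssim\|v\|_{\B_{3q}^{1/2+\kappa'}}\lesssim 1$. For $G_{(2)}$ I take $\alpha_2=-\frac{1}{2}-\kappa$ and invoke the paraproduct/resonant estimates of Proposition \ref{para:paraproduct and resonant} to pair a quadratic in $v+w$ against a driver of regularity $(-\frac{1}{2})^-$; the bound on $\|(v+w)^2\|_{\B_q^{1/2+\kappa'}}$ produced in Lemma \ref{dw:estimate of W_2-4} then yields the terms $1+\|w\|_{L^{3q}}^3+\|w\|_{\B_q^{1+2\kappa'}}$. $G_{(3)}$ is treated similarly but is only linear in $v+w$, and $G_{(4)}$ depends only on the drivers and is bounded trivially. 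For $G_{(5)}$ the continuity of the trilinear operator $R$ controls it by $\|v+w\|_{\B_q^{1/2+\kappa'}}$ times driver norms. For $G_{(6)}$ I apply the commutator estimate of Lemma \ref{local:estimate of com}; the resulting pointwise bound on $\|\com(v,w)(s)\|_{\B_q^{1+\kappa'}}$ reduces, after integration against the kernel, to exactly the same kind of terms already controlled in the proof of Lemma \ref{w:lemm:estimate of G_6}. Finally, $G_{(7)}$ ($\alpha_7=0$) and $G_{(8)}$ ($\alpha_8=-\frac{1}{2}+\kappa'-\kappa$) come out directly from the resonant and paraproduct estimates, being linear in $w$ or $v+w$ at the appropriate Besov regularity.

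For the passage to the integrated bound \eqref{global:w in L^q}, each contribution takes the form $\|\mathcal{W}_{(i)}(t)\|_{\B_q^{3/2-2\kappa'}}\lesssim 1+(K_i*f_i)(t)$ with $K_i(r)=r^{-\theta_i}$, $\theta_i<1$, hence $K_i\in L^1[0,T]$, and with $f_i$ such that $\int_0^T f_i(s)^q\,ds\lesssim 1$. The latter property is a direct consequence of the four a priori estimates: $(\|w(s)\|_{L^{3q}}^3)^q=\|w(s)\|_{L^{3q}}^{3q}$ is integrable on $[0,T]$, $\|w(s)\|_{\B_q^{1+2\kappa'}}^q$ is integrable by Theorem \ref{int:apriori estimate on L^1}, and weaker Besov norms of $w$ are dominated by the latter. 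Young's convolution inequality then yields $\int_0^T \|\mathcal{W}_{(i)}(t)\|_{\B_q^{3/2-2\kappa'}}^q\,dt\lesssim 1$, and summing over $i=1,\dots,8$ together with the contribution from $w_0\in\B_q^{3/2-2\kappa'}$ completes \eqref{global:w in L^q}.

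I expect the main obstacle to be the treatment of $G_{(2)}$, where pairing a field quadratic in $v+w$ against a driver of regularity $(-\frac{1}{2})^-$ forces the sharp interpolation $\|w\|_{\B_{3q/2}^{1/2+\kappa'}}\lesssim\|w\|_{L^{3q}}^{1/2}\|w\|_{\B_q^{1+2\kappa'}}^{1/2}$ (already exploited in Lemma \ref{dw:estimate of W_2-4}) in order to match exponents and avoid producing a higher power of $w$ in a stronger Besov norm. A secondary bookkeeping task is to verify, for each $i$, that $\theta_i<1$; this is where the standing hypothesis $3\kappa'<3/4-1/q$ enters, most tightly at the endpoint for $G_{(8)}$.
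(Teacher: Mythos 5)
Your proposal is correct and takes essentially the same route as the paper: the paper's own proof is exactly the two-line reduction you describe, namely insert the master smoothing estimate \eqref{global:W by int of G} with a driver regularity $\alpha_i$ chosen so that the kernel exponent is below one, then bound $\|G_{(i)}\|_{\B_q^{\alpha_i}}$ using the four a priori controlled quantities (the $G_{(2)}$ bound being precisely the computation already carried out in the proof of Lemma~\ref{dw:estimate of W_2-4}), and finally pass to the integrated estimate \eqref{global:w in L^q}. The only cosmetic difference is that you invoke Young's convolution inequality for the last step where the paper cites its Jensen-type inequality; the two are interchangeable here since $(t-s)^{-\theta_i}\in L^1[0,T]$ and each $f_i$ is $L^q[0,T]$-bounded by Theorem~\ref{int:apriori estimate on L^1} and the $L^\infty$ bounds on $v$ and $w$ in $L^{3q-2}$.
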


\begin{proof}
These are obtained by estimating $\|G_{(i)}\|_{\B_q^{\alpha_i}}$ in \eqref{global:W by int of G} as before. \eqref{global:w in L^q} is obtained by applying Jensen's inequality to \eqref{global:estimate of W_1}-\eqref{global:estimate of W_8}.
\end{proof}

We proceed to iterate Young's convolution inequality until we get $L^\infty[0,T]$ estimate. For simplicity, we write
\[\mathcal{W}_{(-1)}(t)=\sum_{i=2}^8\mathcal{W}_{(i)}(t).\]
Although $G_{(1)}$ and $G_{(2)}$ contain higher order terms of $w$, we can weaken their influence with the help of $L^\infty[0,T]$ estimate of $\|w(t)\|_{L^{3q-2}}$.

\begin{lemm}\label{global:lemm:iterating Young}
Assume that $q>\frac53$ and $\kappa'<\frac{1}{3}-\frac{1}{3q-2}$. For every $t\in[0,T]$,
\begin{align}
\label{global:W_1 by 12/7}\|\mathcal{W}_{(1)}(t)\|_{\B_q^{\frac{3}{2}-2\kappa'}}&\lesssim1+\int_0^t(t-s)^{-\frac{3-4\kappa'}{4}}\|w(s)\|_{\B_q^{\frac{3}{2}-2\kappa'}}^{\frac{12}{7}}ds,\\
\label{global:W_-1 by linear}\|\mathcal{W}_{(-1)}(t)\|_{\B_q^{\frac{3}{2}-2\kappa'}}&\lesssim1+\int_0^t(t-s)^{-\frac{2-\kappa'}{2}}\|w(s)\|_{\B_q^{\frac{3}{2}-2\kappa'}}ds.
\end{align}
If we assume that $\int_0^T\|w(t)\|_{\B_q^{\frac{3}{2}-2\kappa'}}^{p_1}ds\lesssim1$ for some $p_1\in[1,\infty)$, then we have
\begin{align}\label{global:improve W_1}
\int_0^T\|\mathcal{W}_{(1)}(t)\|_{\B_q^{\frac{3}{2}-2\kappa'}}^{p_2}ds\lesssim1
\end{align}
for $p_1>\frac{12}{7}$ such that $\frac{1}{p_2}>\frac{12}{7p_1}-\frac{1}{4}-\kappa'$, and we have
\begin{align}\label{global:improve W_-1}
\int_0^T\|\mathcal{W}_{(-1)}(t)\|_{\B_q^{\frac{3}{2}-2\kappa'}}^{p_3}ds\lesssim1
\end{align}
for $\frac{1}{p_3}>\frac{1}{p_1}-\frac{1}{2}\kappa'$.
\end{lemm}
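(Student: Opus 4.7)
The plan is to first establish the pointwise bounds \eqref{global:W_1 by 12/7}--\eqref{global:W_-1 by linear} on $\|\mathcal{W}_{(i)}(t)\|_{\B_q^{3/2-2\kappa'}}$ via the general kernel estimate \eqref{global:W by int of G}, and then deduce \eqref{global:improve W_1}--\eqref{global:improve W_-1} by a single application of Young's convolution inequality on $[0,T]$.

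For \eqref{global:W_1 by 12/7} I would take $\alpha_1 = 0$ in \eqref{global:W by int of G} and reduce to the pointwise nonlinear bound
\[
\|G_{(1)}(v,w)(s)\|_{L^q} \lesssim 1 + \|w(s)\|_{\B_q^{3/2-2\kappa'}}^{12/7}.
\]
Expanding $G_{(1)}=-\nu(v+w)^2(\bar v+\bar w)$, the only genuinely dangerous summand is $w^2\bar w$: every term containing a factor of $v$ is absorbed by $\sup_t\|v(t)\|_{L^{3q}}\lesssim 1$ (Theorem \ref{global:L^infty of v}) combined with the embedding $\B_q^{3/2-2\kappa'}\hookrightarrow L^{3q}$, which is valid since $3/2-2\kappa'>2/q$ for $q>4/3$ and small $\kappa'$. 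For $\|w\|_{L^{3q}}^3$ I would interpolate via Proposition \ref{para:interpolation} between $(p_0,\alpha_0)=(3q-2,0)$ and $(p_1,\alpha_1)=(q,\tfrac32-2\kappa')$ at parameter $\theta=4/7$, and then embed the resulting $\B_{P,\infty}^{A}$-norm into $L^{3q}$ using Proposition \ref{para:estimates of Besov norm}(1),(4) together with Bernstein-type embedding in $p$, arriving at
\[
\|w\|_{L^{3q}}^{3}\lesssim \|w\|_{L^{3q-2}}^{9/7}\,\|w\|_{\B_q^{3/2-2\kappa'}}^{12/7},
\]
and the a priori bound $\sup_t\|w(t)\|_{L^{3q-2}}\lesssim 1$ absorbs the prefactor.

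For \eqref{global:W_-1 by linear}, each of $G_{(2)},\dots,G_{(8)}$ is at most quadratic in $w$ once the (bounded) stochastic data and the $v$-contribution are separated. I would revisit the $\B_q^{\alpha_i}$-bounds on $G_{(i)}$ produced in the proofs of Lemmas \ref{integrable:Lq[st] of 1-4}--\ref{integrable:Lq[st] of 578} and, in each case, convert all Besov/Lebesgue norms of $w$ on the right into at most the first power of $\|w\|_{\B_q^{3/2-2\kappa'}}$, by repeatedly using $\B_q^{3/2-2\kappa'}\hookrightarrow L^{3q}$ and $\B_q^{3/2-2\kappa'}\hookrightarrow \B_q^{1+2\kappa'}$, together with the $L^{3q-2}$ a priori bound and Young's inequality. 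Summing over $i=2,\dots,8$, the slowest-decaying kernel is $s^{-(2-\kappa')/2}$, produced by the choices of $\alpha_i$ closest to $-\tfrac12$ (namely in $G_{(2)}$ and $G_{(3)}$).

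Finally, \eqref{global:improve W_1} and \eqref{global:improve W_-1} follow from Young's convolution inequality on $[0,T]$. Writing the right-hand side of \eqref{global:W_1 by 12/7} as $K_1\ast g$ with $K_1(s)=s^{-(3-4\kappa')/4}\mathbf{1}_{(0,T]}(s)$ and $g(s)=\|w(s)\|_{\B_q^{3/2-2\kappa'}}^{12/7}$, one has $K_1\in L^r$ for any $r<\tfrac{4}{3-4\kappa'}$ and $\|g\|_{L^{7p_1/12}}\lesssim 1$ by the hypothesis $\int_0^T\|w\|_{\B_q^{3/2-2\kappa'}}^{p_1}\lesssim 1$; Young then gives $K_1\ast g\in L^{p_2}$ whenever $1+\tfrac1{p_2}=\tfrac1r+\tfrac{12}{7p_1}$, and letting $r\uparrow\tfrac{4}{3-4\kappa'}$ yields exactly the condition $\tfrac1{p_2}>\tfrac{12}{7p_1}-\tfrac14-\kappa'$. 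The same argument with kernel $s^{-(2-\kappa')/2}$ and $w$-exponent $1$ gives \eqref{global:improve W_-1}. The main obstacle is the interpolation estimate underlying \eqref{global:W_1 by 12/7}: the choice $\theta=\tfrac47$ (hence the exponent $\tfrac{12}{7}$) is not ad hoc, and the hypotheses $q>\tfrac53$ and $\kappa'<\tfrac13-\tfrac1{3q-2}$ are calibrated precisely so that the resulting critical Besov embedding $\B_{P,\infty}^{A}\hookrightarrow L^{3q}$ has a strictly positive excess of regularity, leaving room to absorb the $\ell^1$-to-$\ell^\infty$ loss in Proposition \ref{para:estimates of Besov norm}(1).
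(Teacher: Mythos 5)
Your overall plan follows the paper's proof closely: the pointwise bounds \eqref{global:W_1 by 12/7}--\eqref{global:W_-1 by linear} via the kernel estimate \eqref{global:W by int of G}, the interpolation giving $\|w\|_{L^{3q}}\lesssim\|w\|_{\B_q^{3/2-2\kappa'}}^{4/7}$ (hence the exponent $\tfrac{12}{7}$), and a single application of Young's convolution inequality for \eqref{global:improve W_1}--\eqref{global:improve W_-1}. Your interpolation step (interpolate between $\B_{3q-2}^0$ and $\B_q^{3/2-2\kappa'}$ with weight $\tfrac47$, then embed into $L^{3q}$) is, after expanding, equivalent to the paper's route (embed $L^{3q}\supset\B_{3q}^\epsilon$, interpolate between $\B_r^{7\epsilon/4}$ and $\B_{3q-2}^0$, then Besov-embed $\B_q^{3/2-2\kappa'}\hookrightarrow\B_r^{7\epsilon/4}$), and both put the hypotheses $q>\tfrac53$ and $\kappa'<\tfrac13-\tfrac1{3q-2}$ to exactly the same use. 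The last paragraph on Young's convolution inequality matches the paper's argument verbatim.

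There is, however, a gap in your sketch of \eqref{global:W_-1 by linear}. The only genuinely quadratic piece among $G_{(2)},\dots,G_{(8)}$ is $\|w^2\|_{\B_q^{1/2+\kappa'}}$ coming from $G_{(2)}$, and the toolkit you list --- the embeddings $\B_q^{3/2-2\kappa'}\hookrightarrow L^{3q}$, $\B_q^{3/2-2\kappa'}\hookrightarrow\B_q^{1+2\kappa'}$, the $L^{3q-2}$ bound, and Young's inequality --- cannot by itself produce a bound \emph{linear} in $\|w\|_{\B_q^{3/2-2\kappa'}}$. Whether you start from the earlier bound $\|w^2\|_{\B_q^{1/2+\kappa'}}\lesssim\|w\|_{L^{3q}}^3+\|w\|_{\B_q^{1+2\kappa'}}$ or from $\|w\|_{\B_{2q}^{1/2+\kappa'}}^2$, embeddings preserve or raise the exponent and Young's inequality only redistributes it; neither drops it to one. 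What is needed --- and what the paper does --- is to re-run the Bony decomposition $w^2=2\,w\pl w+w\rs w$ with Proposition \ref{para:paraproduct and resonant}, splitting the product asymmetrically: $\|w\pl w\|_{\B_q^{1/2+\kappa'}}\lesssim\|w\|_{L^{3q-2}}\|w\|_{\B_r^{1/2+\kappa'}}$ with $\tfrac1q=\tfrac1{3q-2}+\tfrac1r$, after which $\|w\|_{L^{3q-2}}\lesssim1$ kills one factor and $\B_q^{3/2-2\kappa'}\hookrightarrow\B_r^{1/2+\kappa'}$ (valid exactly because $\kappa'<\tfrac13-\tfrac1{3q-2}$) handles the other; the resonant piece requires the extra interpolation $\|w\|_{\B_{2q}^{1/4+\kappa'/2}}\lesssim\|w\|_{L^{3q-2}}^{1/2}\|w\|_{\B_r^{1/2+\kappa'}}^{1/2}$. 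This paraproduct split between an $L^{3q-2}$-factor and a $\B_r$-factor is the mechanism that weakens $G_{(2)}$ to effectively linear order, and it needs to be stated rather than subsumed under generic embeddings.
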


\begin{proof}
For \eqref{global:W_1 by 12/7}, we need to replace $\|w\|_{L^{3q}}^{3q}$ by $\|w\|_{\B_q^{\frac{3}{2}-2\kappa'}}^{\frac{12}{7}}$. Indeed, for sufficiently small $\epsilon>0$,
\begin{align*}
\|w\|_{L^{3q}}\lesssim\|w\|_{\B_{3q}^\epsilon}\lesssim\|w\|_{\B_r^{\frac{7}{4}\epsilon}}^{\frac{4}{7}}\|w\|_{L^{3q-2}}^{\frac{3}{7}},
\end{align*}
where $r$ is determined by $\frac{1}{3q}=\frac{4}{7}\frac{1}{r}+\frac{3}{7}\frac{1}{3q-2}$. $\|w\|_{L^{3q-2}}$ is already bounded by $1$. Besov embedding shows $\B_q^{\frac{7}{4}\epsilon+3(\frac{1}{q}-\frac{1}{r})}\subset\B_r^{\frac{7}{4}\epsilon}$, where by assumption
\begin{align*}
\frac{7}{4}\epsilon+3\left(\frac{1}{q}-\frac{1}{r}\right)=\frac{7}{4}\epsilon+\frac{1}{4}\left(\frac{5}{q}+\frac{9}{3q-2}\right)<\frac{7}{4}\epsilon+\frac{5}{4q}+\frac{3}{4}-\frac{9}{4}\kappa'<\frac{3}{2}-2\kappa'
\end{align*}
for every $\epsilon<\frac17\kappa'$. Hence we have
\[\|w\|_{L^{3q}}\lesssim\|w\|_{\B_q^{\frac{3}{2}-2\kappa'}}^{\frac{4}{7}}.\]

For \eqref{global:W_-1 by linear}, it is sufficient to consider the square terms of $w$. As in the proof of Lemma \ref{dw:estimate of W_2-4}, by Bony's decomposition
\begin{align*}
\|w^2\|_{\B_q^{\frac{1}{2}+\kappa'}}&\lesssim\|w\pl w\|_{\B_q^{\frac{1}{2}+\kappa'}}+\|w\rs w\|_{\B_q^{\frac{1}{2}+\kappa'}}\lesssim\|w\|_{L^{3q-2}}\|w\|_{\B_r^{\frac{1}{2}+\kappa'}}+\|w\|_{\B_{2q}^{\frac{1}{4}+\frac{1}{2}\kappa'}}^2\\
&\lesssim\|w\|_{L^{3q-2}}\|w\|_{\B_r^{\frac{1}{2}+\kappa'}}+(\|w\|_{L^{3q-2}}^{\frac{1}{2}}\|w\|_{\B_r^{\frac{1}{2}+\kappa'}}^{\frac{1}{2}})^2\lesssim\|w\|_{L^{3q-2}}\|w\|_{\B_r^{\frac{1}{2}+\kappa'}},
\end{align*}
where $r$ is determined by $\frac{1}{q}=\frac{1}{3q-2}+\frac{1}{r}$. Boundedness of $\|w\|_{L^{3q-2}}$ and Besov embedding
\[\|w\|_{\B_r^{\frac{1}{2}+\kappa'}}\lesssim\|w\|_{\B_q^{\frac{3}{3q-2}+\frac{1}{2}+\kappa'}}\lesssim\|w\|_{\B_q^{\frac{3}{2}-2\kappa'}}\]
show the required estimate.

The improvement results \eqref{global:improve W_1}-\eqref{global:improve W_-1} are immediately obtained from Young's inequality. If $p_1>\frac{12}{7}$, we have
\begin{align*}
\left(\int_0^T\|\mathcal{W}_{(1)}(t)\|_{\B_q^{\frac{3}{2}-2\kappa'}}^{p_2}dt\right)^{\frac{1}{p_2}}\lesssim1+\left(\int_0^T(T-t)^{-\frac{3-4\kappa'}{4}r}dt\right)^{\frac{1}{r}}\left(\int_0^T\|w(t)\|_{\B_q^{\frac{3}{2}-2\kappa'}}^{p_1}dt\right)^{\frac{12}{7p_1}},
\end{align*}
where $1+\frac{1}{p_2}=\frac{1}{r}+\frac{12}{7p_1}$. Then (\ref{global:improve W_1}) follows if $\frac{3-4\kappa'}{4}r<1$, thus $\frac{1}{p_2}>\frac{12}{7p_1}-\frac{1}{4}-\kappa'$. \eqref{global:improve W_-1} is similar.
\end{proof}

By iterating this improvement result finite times (which depends on $\kappa'$), we obtain the required a priori estimate.

\begin{proof}[{Proof of Theorem \ref{global:L^infty of w}}]
First we show that we can replace the exponent $q$ in \eqref{global:w in L^q} by $q_1$, which satisfies $\frac{1}{q_1}=\frac{1}{q}-\frac{1}{4}$. From \eqref{global:estimate of W_1}, Young's inequality yields
\[\int_0^T\|\mathcal{W}_{(1)}(t)\|_{\B_q^{\frac{3}{2}-2\kappa'}}^{q_1}dt\lesssim1\]
because $1+\frac{1}{q_1}=\frac{3}{4}+\frac{1}{q}$. On the other hand, from Lemma \ref{global:lemm:iterating Young} we have $L^{p_1}[0,T]$ estimate of $\|\mathcal{W}_{(-1)}(t)\|_{\B_q^{\frac{3}{2}-2\kappa'}}$ with $\frac{1}{p_1}>\frac{1}{q}-\frac{1}{2}\kappa'$. To sum them up, we obtain $L^{p_1}[0,T]$ boundedness of $\|w(t)\|_{\B_q^{\frac{3}{2}-2\kappa'}}$. Now by applying Lemma \ref{global:lemm:iterating Young} again, we obtain $L^{p_2}[0,T]$ estimate of $\|\mathcal{W}_{(-1)}(t)\|_{\B_q^{\frac{3}{2}-2\kappa'}}$ with $\frac{1}{p_2}>\frac{1}{p_1}-\frac{1}{2}\kappa'>\frac{1}{q}-\kappa'$, which implies $L^{p_2}[0,T]$ boundedness of $\|w(t)\|_{\B_q^{\frac{3}{2}-2\kappa'}}$. We can repeat this argument until $p_{N_1}$ which satisfies $\frac{1}{p_{N_1}}<\frac{1}{q}-\frac{1}{4}$. ($N_1\sim\frac1{2\kappa'}$.) Hence we have
\[\int_0^T\|w(t)\|_{\B_q^{\frac{3}{2}-2\kappa'}}^{q_1}dt\lesssim1.\]

Next we show that we can again replace the exponent $q_1$ by $q_2$, which satisfies $\frac{1}{q_2}=\frac{12}{7q_1}-\frac{1}{4}$. We note that $\frac{1}{q_2}<\frac{1}{q_1}$ because $\frac{1}{q_1}<\frac{3}{5}-\frac{1}{4}=\frac{7}{20}$. Lemma \ref{global:lemm:iterating Young} implies
\[\int_0^T\|\mathcal{W}_{(1)}(t)\|_{\B_q^{\frac{3}{2}-2\kappa'}}^{q_2}dt\lesssim1.\]
Then by the same argument as above, we can conclude that $\mathcal{W}_{(-1)}$ is $L^{q_2}[0,T]$ bounded after performing $N_2$ ($\sim\frac2{\kappa'}(\frac1{q_1}-\frac1{q_2})$) times Young's inequalities, so $\|w(t)\|_{\B_q^{\frac{3}{2}-2\kappa'}}$ also.

We can replace the exponent $q_2$ by $q_3$ which satisfies $\frac{1}{q_3}=\frac{12}{7q_2}-\frac{1}{4}$ by the same arguments. We can repeat this argument until the sequence $\{\frac{1}{q_n}\}$ determined by
\[\frac{1}{q_{n+1}}=\frac{12}{7q_n}-\frac{1}{4}\]
achieves $\frac{1}{q_M}\le0$. ($M$ has the order $\mathcal{O}(|\log\kappa'|)$.) If $\frac{1}{q_M}<0$, then we should replace it by $q_M=\infty$. In the end, after performing $M+N_1+\cdots+N_M=\mathcal{O}((\kappa')^{-1})$ times improvements argument, we can complete the proof.
\end{proof}

\begin{proof}[{Proof of Theorem \ref{localexistence+:main theorem}}]
By Theorems \ref{global:L^infty of v} and \ref{global:L^infty of w}, we have a priori $L^\infty[0,T]$ estimate of $(v,w)$ if the conditions
$$
\frac32<p<5\wedge\{1+\mu(\mu+\sqrt{1+\mu^2})\},\quad\kappa'<\frac13-\frac1{2p}
$$
hold. Since
$$
\frac32<1+\mu(\mu+\sqrt{1+\mu^2})\Leftrightarrow\mu>\frac1{2\sqrt2},
$$
the assumption $\mu>\frac1{2\sqrt2}$ is satisfied if $p$ is sufficiently close to $\frac32$, or equivalently $\kappa'$ is sufficiently small.
\end{proof}

\section*{Acknowledgements}
This work was supported by JSPS KAKENHI, Grant-in-Aid for JSPS Fellows, 16J03010.  The author would like to thank Professor R. Fukuizumi for leading him to the problem discussed in this paper and Professor T. Funaki for him helpful remarks. He would like to thank Professor Y. Inahama and N. Naganuma also for their valuable advices.

\end{document}